\def\<{{\langle}}
\def\>{{\rangle}}
\theoremstyle{definition}
\newtheorem{lemma}{Lemma}[section]
\newtheorem{theorem}[lemma]{Theorem}\newtheorem{proposition}[lemma]{Proposition}\newtheorem{definition}[lemma]{Definition}\newtheorem{remark}[lemma]{Remark}\usepackage{times}
\title{Fusion Products of Twisted Modules in Permutation Orbifolds: II}
\author{Chongying Dong, Feng Xu, Nina Yu}
\date{}
\numberwithin{equation}{section}
\begin{document}
\maketitle
\begin{abstract}
Let $V$ be a simple, rational, $C_{2}$-cofinite vertex operator algebra of CFT type, and let $k$ be a positive integer. In this paper, we determine the fusion products of twisted modules for $V^{\otimes k}$ and  $G = \left\langle g \right\rangle$ generated by any permutation $g \in S_{k}$. 
\end{abstract}

\tableofcontents{}

\section{Introduction}

This paper is a continuation of \cite{DLXY}, investigating  the fusion products in permutation orbifold theory.  The permutation orbifolds study the tensor
product vertex operator algebra $V^{\otimes k}$ with  the symmetric group $S_{k}$ acting as automorphisms. Here,
$V$ is a vertex operator algebra and $k$ is a positive integer.

The exploration of permutation orbifolds for general vertex operator algebras commenced with \cite{BDM}, which uncovered a fundamental relationship between twisted modules for $V^{\otimes k}$
  under permutation automorphisms and 
$V$-modules. In particular, if $g$ is a $k$-cycle,  a canonical $g$-twisted $V^{\otimes k}$-module
structure on any $V$-module $(W,Y_{W}(\cdot,z))$ was discovered, denoted as $T_{g}\left(W\right)$. This construction established an isomorphism between categories of weak, admissible, and ordinary $V$-modules and their corresponding $g$-twisted $V^{\otimes k}$-modules. 

The permutation orbifolds have been studied extensively. 
The $C_{2}$-cofiniteness of permutation orbifolds and general cyclic
orbifolds was established in works such as  \cite{A1,A2,M1,M2}. An equivalence of two constructions (see \cite{FLM}, \cite{Le},
\cite{BDM}) of twisted modules for the permutation orbifolds of lattice
vertex operator algebras was established  in \cite{BHL}. Studies on permutation orbifolds within the framework of conformal nets, including the determination of irreducible representations and fusion rules for $k= 2$, were presented in \cite{KLX}. Research on permutation orbifolds of lattice vertex operator algebras for $k = 2$ and $k = 3$ is presented in \cite{DXY1, DXY2, DXY3}, focusing on the classification of irreducible modules and the determination of fusion rules. The $S_3$-orbifold of a rank-three Heisenberg vertex algebra  is examined in \cite{MPSh}. Permutation orbifolds of the Virasoro vertex algebra for $k = 2$ and $k = 3$ are studied in \cite{MPS1} and \cite{MPS2}. The representation theory of a permutation orbifold of a lattice vertex algebra $V_Q$ is investigated in \cite{BEKT}, with an emphasis on irreducible $V_Q^\sigma$-modules and their characters, where $Q$ is a direct sum of a prime number of copies of a positive-definite even lattice, and $\sigma$ acts as a cyclic shift.
  An explicit expression for the S-matrix of the orbifold $(V^{\otimes k})^{\langle g \rangle}$, where $g = (1, 2, \dots, k)$, was derived in \cite{DXY4} in terms of the S-matrix of $V^{\otimes k}$.
The fusion products of $V^{\otimes k}$-modules
with $g$-twisted $V^{\otimes k}$-modules for any $g\in S_{k}$ were
  thoroughly determined in \cite{DLXY}. 
  
  In this paper, we extend previous studies by investigating the fusion products of twisted $V^{\otimes k}$-modules when
  $G$ is a cyclic group.  We derive explicit formulas for these fusion products in terms of the entries of the \( S \)-matrix of \( V \). The fusion products of twisted modules for any cyclic group generated by a permutation can be reduced to the case where the permutation is a single cycle.
To give a precise description we let \text{Irr}$(V)=\{M^0=V, M^1,\cdots, M^p\}$ be all irreducible $V$-modules.  
  Let  $g=(1, 2,\cdots,k)$ and  $1 \leq s, r < k$. By \cite{BDM},   any $g^s$-twisted module has the form $T_{g^s}^{M^{i_1},\cdots,M^{i_d}}$ where $d=\text{gcd}(s,k)$ and $M^{i_1},\cdots,M^{i_d}$ are irreducible $V$-modules. Similarly, any $g^r$-twisted modules is given by $T_{g^r}^{M^{j_1},\cdots,M^{j_m}}$ where $m=\text{gcd}(r,k)$ and $M^{j_1},\cdots,M^{j_m}$ are irreducible $V$-modules. To determine the fusion products of $T_{g^s}^{M^{i_1},\cdots,M^{i_d}}$ with $T_{g^r}^{M^{j_1},\cdots,M^{j_m}}$, we first find the fusion product of $T_{g^s}^{V,\cdots,V} $ with $T_{g^r}^{V,\cdots,V}$. %Note that all $g^t$-twisted $V^{\otimes k}$-modules for $t=0,...,k-1$ form a fusion category.  We then apply the fusion product \( T_{g^s}^{V, \dots, V} \boxtimes T_{g^r}^{V, \dots, V} \) and
  We then use results on the fusion product of untwisted and twisted modules obtained in \cite{DLXY} to determine the fusion product \( T_{g^s}^{M^{i_1}, \dots, M^{i_d}} \boxtimes T_{g^r}^{M^{j_1}, \dots, M^{j_m}} \) in general.

The structure of this paper is as follows: Section 2 introduces the basic notions and results related to vertex operator algebras. Section 3 reviews fundamental concepts on fusion products and the \(S\)-matrix in permutation orbifolds. In Section 4, we first show that the fusion products of any $\sigma^s$-twisted module with any $\sigma^r$-twisted module, where $\sigma \in S_k$, can be reduced to the case that $\sigma$ is a single cycle. We then determine the fusion product of any $g^s$-twisted module with any $g^r$-twisted module for $1 \leq s, r < k$, where $g = (1, 2, \ldots, k)$, providing an explicit formula for the fusion rules.
Section 5 provides a formula for the fusion product \(\left(T_{g}(V)\right)^{\boxtimes k}\), where \(g = (1, 2, \cdots, k)\). In Section 6, we present an example illustrating the fusion products of twisted modules for the permutation orbifold with $V = V_L$, a rank-1 lattice vertex operator algebra, $k = 4$, and $g = (1, 2, 3, 4)$.

\section{Preliminaries}

\subsection{Basics on vertex operator algebra }

Let $\left(V,Y,\mathbf{1},\omega\right)$ be a vertex operator algebra \cite{FLM,FHL,Bo}. We review twisted modules and related results in this section. 
\begin{definition}
An \emph{automorphism} of a vertex operator algebra $V$ is a linear isomorphism $g$ of $V$ such that $g\left(\omega\right) = \omega$ and $gY\left(v, z\right)g^{-1} = Y\left(gv, z\right)$ for all $v \in V$. Denote by $\text{Aut}\left(V\right)$ the group of all automorphisms of $V$.
\end{definition}

It is well-known that for any subgroup $G$ of $\text{Aut}\left(V\right)$, the set of $G$-fixed points
$$
V^{G} = \left\{ v \in V \mid g(v) = v \ \text{for all}\ g \in G \right\}
$$
is a vertex operator subalgebra.

Let $g$ be a finite-order automorphism of $V$ with order $T$. Then
$$
V = \bigoplus_{r=0}^{T-1} V^{r},
$$
where $V^{r} = \left\{ v \in V \mid gv = e^{2\pi ir/T}v \right\}$ for $r \in \mathbb{Z}$.

\begin{definition}
A \emph{weak $g$-twisted $V$-module} is a vector space $M$ equipped with a linear map
\[
Y_{M}(v, z): V \to \left(\text{End}\,M\right)\left[\left[z^{1/T}, z^{-1/T}\right]\right],
\]
such that
\[
v \mapsto Y_{M}(v, z) = \sum_{n \in \frac{1}{T} \mathbb{Z}} v_{n} z^{-n-1}, \quad v_{n} \in \text{End}\,M,
\]
which satisfies the following conditions for all $u \in V^r$, $v \in V$, and $w \in M$, where $0 \leq r \leq T-1$:
\begin{gather*}
Y_{M}(u, z) = \sum_{n \in \frac{r}{T} + \mathbb{Z}} u_{n} z^{-n-1}, \\
u_{n} w = 0 \quad \text{for $n$ sufficiently large}, \\
Y_{M}(\mathbf{1}, z) = \text{Id}_{M}, \\
z_0^{-1} \delta\left( \frac{z_1 - z_2}{z_0} \right) Y_{M}(u, z_1) Y_{M}(v, z_2) - z_0^{-1} \delta\left( \frac{z_2 - z_1}{-z_0} \right) Y_{M}(v, z_2) Y_{M}(u, z_1) \\
= z_2^{-1} \left( \frac{z_1 - z_0}{z_2} \right)^{-\frac{r}{T}} \delta\left( \frac{z_1 - z_0}{z_2} \right) Y_{M}\left( Y(u, z_0) v, z_2 \right),
\end{gather*}
where $\delta(z) = \sum_{n \in \mathbb{Z}} z^n$.
\end{definition}

\begin{definition}
An \emph{admissible $g$-twisted $V$-module} is a weak $g$-twisted module with a $\frac{1}{T}\mathbb{Z}_{+}$-grading
$$
M = \bigoplus_{n \in \frac{1}{T}\mathbb{Z}_{+}} M(n)
$$
such that for homogeneous $u \in V$ and $m, n \in \frac{1}{T}\mathbb{Z}$, we have
$$
u_{m}M(n) \subset M\left(\text{wt}\,u - m - 1 + n\right).
$$
\end{definition}

Note that if $M = \bigoplus_{n \in \frac{1}{T}\mathbb{Z}_{+}} M(n)$ is an irreducible admissible $g$-twisted $V$-module, then there exists a complex number $\lambda_{M}$ such that $L(0)|_{M(n)} = \lambda_{M} + n$ for all $n$. As a convention, we assume $M(0) \neq 0$, and $\lambda_{M}$ is called the \emph{weight} or \emph{conformal weight} of $M$.

\begin{definition}
A $g$-\emph{twisted $V$-module} is a weak $g$-twisted $V$-module $M$ that carries a $\mathbb{C}$-grading induced by the spectrum of $L(0)$, where $L(0)$ is the component operator of $Y(\omega, z) = \sum_{n \in \mathbb{Z}} L(n) z^{-n-2}$. Specifically, we have
$$
M = \bigoplus_{\lambda \in \mathbb{C}} M_{\lambda},
$$
where $M_{\lambda} = \left\{ w \in M \mid L(0)w = \lambda w \right\}$. Moreover, it is required that $\dim M_{\lambda} < \infty$ for all $\lambda$, and for any fixed $\lambda_0$, $M_{\frac{n}{T} + \lambda_0} = 0$ for all sufficiently small integers $n$.
\end{definition}

When $g = 1$, we recover the notions of weak, ordinary, and admissible $V$-modules (see \cite{DLM2}).

\begin{definition}
A vertex operator algebra $V$ is said to be \emph{$g$-rational} if the admissible $g$-twisted module category is semisimple. In particular, $V$ is said to be \emph{rational} if $V$ is $1$-rational.
\end{definition}

It was proved in \cite{DLM2} that if $V$ is a $g$-rational vertex operator algebra, then there are only finitely many irreducible admissible $g$-twisted $V$-modules up to isomorphism, and any irreducible admissible $g$-twisted $V$-module is ordinary.

\begin{definition}
A vertex operator algebra $V$ is said to be \emph{regular} if every weak $V$-module $M$ is a direct sum of irreducible ordinary $V$-modules.
\end{definition}

\begin{definition}
A vertex operator algebra $V$ is said to be \emph{$C_2$-cofinite} if $V / C_2(V)$ is finite-dimensional, where
$$
C_2(V) = \text{Span}\left\{ u_{-2} v \mid u, v \in V \right\}.
$$
\end{definition}

A vertex operator algebra $V = \bigoplus_{n \in \mathbb{Z}} V_n$ is said to be of \emph{CFT type} if $V_n = 0$ for all negative integers $n$ and $V_0 = \mathbb{C}\mathbf{1}$.

In \cite{DLM2, Li2}, it was demonstrated that if $V$ is a $C_2$-cofinite vertex operator algebra, then $V$ has only finitely many irreducible admissible modules up to isomorphism. For vertex operator algebras of CFT type, regularity is equivalent to rationality and $C_2$-cofiniteness \cite{KL, ABD}.

%%%%%%%%%%%%%%%

\begin{definition}
Let $M = \bigoplus_{n \in \frac{1}{T}\mathbb{Z}_{+}} M(n)$ be an admissible $g$-twisted $V$-module. The \emph{contragredient module} $M'$
is defined as follows:
$$
M' = \bigoplus_{n \in \frac{1}{T}\mathbb{Z}_{+}} M(n)^{*},
$$
where $M(n)^{*} = \text{Hom}_{\mathbb{C}}(M(n), \mathbb{C})$. The vertex operator $Y_{M'}\left(v,z\right)$ is defined for $v\in V$ via
\begin{equation}
\langle Y_{M'}(v, z) f, u \rangle = \langle f, Y_{M}\left(e^{z L(1)} (-z^{-2})^{L(0)} v, z^{-1}\right) u \rangle,
\label{contragredient module}
\end{equation}
where $\langle f, w \rangle = f(w)$ is the natural pairing $M' \times M \to \mathbb{C}$.
\end{definition}

If $M = \bigoplus_{\lambda \in \mathbb{C}} M_{\lambda}$ is a $g$-twisted $V$-module, we define $M' = \bigoplus_{\lambda \in \mathbb{C}} M_{\lambda}^{*}$ and define $Y_{M'}(v, z)$ for $v \in V$ in the same way.

The same argument as in \cite{FHL} yields the following:

\begin{proposition}
If $(M, Y_M)$ is an admissible $g$-twisted $V$-module, then $(M', Y_{M'})$ carries the structure of an admissible $g^{-1}$-twisted $V$-module. Furthermore, if $(M, Y_M)$ is a $g$-twisted $V$-module, then $(M', Y_{M'})$ carries the structure of a $g^{-1}$-twisted $V$-module. Moreover, $M$ is irreducible if and only if $M'$ is irreducible.
\end{proposition}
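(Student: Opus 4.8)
The plan is to carry over the contragredient construction of \cite{FHL} to the twisted setting, inserting fractional powers of the formal variables wherever the twist demands them; the only genuinely new point will be the bookkeeping of those fractional powers in the Jacobi identity. Throughout, write $M=\bigoplus_{n\in\frac1T\Z_{+}}M(n)$ and give $M'$ the grading $M'(n)=M(n)^{*}$.

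\emph{Shape of the field map.} For homogeneous $v\in V^{r}$ of weight $N$ one has the finite expansion $e^{zL(1)}(-z^{-2})^{L(0)}v=\sum_{j\ge 0}\frac{(-1)^{N}}{j!}\,z^{\,j-2N}\,L(1)^{j}v$, and every $L(1)^{j}v$ again lies in $V^{r}$ since $[L(1),g]=0$ (because $g\omega=\omega$). Substituting this into \eqref{contragredient module} and expanding $Y_{M}$ by the $g$-twisted axioms, one reads off $Y_{M'}(v,z)=\sum_{n\in -\frac rT+\Z}v_{n}z^{-n-1}$; as $V^{r}$ is the $e^{2\pi i(T-r)/T}$-eigenspace of $g^{-1}$ and $(T-r)/T\equiv -r/T\pmod\Z$, this is precisely the eigenspace-compatibility condition for a $g^{-1}$-twisted module. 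The vacuum axiom $Y_{M'}(\mathbf 1,z)=\mathrm{Id}_{M'}$ follows from $e^{zL(1)}(-z^{-2})^{L(0)}\mathbf 1=\mathbf 1$. A weight count in \eqref{contragredient module} (equivalently, conjugating by $z^{L(0)}$) gives $u_{m}M'(n)\subseteq M'(\mathrm{wt}\,u-m-1+n)$ for homogeneous $u$; since $M'(\ell)=0$ for $\ell<0$, this forces $u_{m}f=0$ for $m$ large, for each $f\in M'$. So the graded space $M'$ together with $Y_{M'}$ satisfies every axiom of an admissible $g^{-1}$-twisted module except possibly the Jacobi identity, which is taken up next.

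\emph{Twisted Jacobi identity.} This is the core of the proof. Exactly as in \cite{FHL}, one first records that $Y_{M'}$ is the \emph{adjoint} of $Y_{M}$, namely $\langle Y_{M'}(v,z)f,u\rangle=\langle f,\,Y_{M}\bigl(e^{zL(1)}(-z^{-2})^{L(0)}v,\,z^{-1}\bigr)u\rangle$ for all $f\in M'$, $u\in M$, and that this relation is involutive (via the standard identity $e^{z^{-1}L(1)}(-z^{2})^{L(0)}e^{zL(1)}(-z^{-2})^{L(0)}=\mathrm{id}$). One then pairs the $g^{-1}$-twisted Jacobi identity to be proved for $Y_{M'}$ against arbitrary $u\in M$ and $f\in M'$; using the $L(0)$- and $L(1)$-conjugation formulas applied to $Y(u,z_{0})v$, the $\mathrm{SL}_{2}$-type relations governing the substitution $z\mapsto z^{-1}$, and elementary $\delta$-function manipulations, the identity for $Y_{M'}$ is converted into the already-established $g$-twisted Jacobi identity for $Y_{M}$. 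The one step with no untwisted analogue is the tracking of the fractional factor $\bigl(\frac{z_{1}-z_{0}}{z_{2}}\bigr)^{-r/T}$: after the substitutions $z_{i}\mapsto z_{i}^{-1}$ it must reassemble, through the fractional-power $\delta$-function calculus of \cite{FLM,DLM2}, into the factor with exponent $-(T-r)/T$ appropriate to $g^{-1}$ acting on $v\in V^{r}$. Finally, since $g^{-1}\omega=\omega$ the Virasoro element acts compatibly on $M'$; hence in the ordinary case $M'$ carries the induced $L(0)$-grading, and both $\dim M_{\lambda}<\infty$ and the lower bound on the weights transfer to $M'$ by duality, giving the second assertion.

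\emph{Irreducibility.} Because $Y_{M'}$ is the adjoint of $Y_{M}$, for any graded $g$-twisted submodule $N\subseteq M$ its annihilator $N^{\perp}=\{f\in M'\mid\langle f,N\rangle=0\}$ is a graded $g^{-1}$-twisted submodule of $M'$, and $N\mapsto N^{\perp}$ is inclusion-reversing; under the canonical identification $M''\cong M$ (which holds here since the modules of interest have finite-dimensional graded spaces) its inverse is the analogous perp taken inside $M''$, so graded submodules of $M$ and of $M'$ correspond bijectively. Consequently $M$ has no nonzero proper graded submodule if and only if $M'$ has none, i.e. $M$ is irreducible if and only if $M'$ is. I expect the twisted Jacobi identity — specifically the fractional-power bookkeeping in the $\delta$-function manipulations — to be the only real obstacle; the shape of $Y_{M'}$, the grading and truncation, and the irreducibility argument are routine adaptations of \cite{FHL}.
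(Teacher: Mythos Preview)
Your proposal is correct and follows exactly the approach the paper indicates: the paper does not give a proof at all but simply states that ``the same argument as in \cite{FHL} yields the following,'' and your sketch is precisely a twisted adaptation of the \cite{FHL} contragredient construction, with the fractional-power bookkeeping in the Jacobi identity correctly identified as the only new ingredient.
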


In particular, if $(M, Y_M)$ is an ordinary $V$-module, then $(M', Y_{M'})$ is also an ordinary $V$-module. An ordinary $V$-module $M$ is said to be \emph{self-dual} if $M$ and $M'$ are isomorphic. A vertex operator algebra $V$ is said to be \emph{self-dual} if $V$ and $V'$ are isomorphic as ordinary $V$-modules.

We will also need the following result from \cite{CM, M2}:

\begin{theorem} \label{CM}
Assume that $V$ is a regular and self-dual vertex operator algebra of CFT type. Then for any solvable subgroup $G$ of $\text{Aut}(V)$, $V^G$ is a regular and self-dual vertex operator algebra of CFT type.
\end{theorem}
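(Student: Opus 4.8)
The plan is to argue by induction on $|G|$ (which we take to be finite, as in all intended applications), reducing the only genuinely hard point to the case where $G$ is cyclic of prime order. The other two conclusions — CFT type and self-duality — I would show to descend to $V^{G}$ for any finite automorphism group $G$ fixing $\omega$, independently of solvability, so the whole weight of the argument rests on the prime-cyclic case.

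First I would dispose of CFT type and self-duality. Since $V^{G}$ is a vertex operator subalgebra of $V=\bigoplus_{n\ge 0}V_{n}$ containing $\mathbf 1$ and $\omega$, we have $(V^{G})_{n}=V_{n}\cap V^{G}=0$ for $n<0$ and $(V^{G})_{0}=\mathbb{C}\mathbf 1$, so $V^{G}$ is of CFT type. For self-duality I would use the characterization of self-dual vertex operator algebras of CFT type via nondegenerate invariant bilinear forms (Li): a vertex operator algebra $U$ of CFT type is self-dual if and only if it carries a nondegenerate invariant bilinear form, and the space of such forms is identified with $(U_{0}/L(1)U_{1})^{*}$. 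Applied to $V$, self-duality together with $V_{0}=\mathbb{C}\mathbf 1$ forces $L(1)V_{1}=0$ and yields a nondegenerate invariant form, unique after normalizing $\langle\mathbf 1,\mathbf 1\rangle=1$; since every $h\in G$ fixes $\mathbf 1$ and $\omega$, this normalization is preserved, so the form is $G$-invariant. Decomposing each finite-dimensional $V_{n}$ into $G$-isotypic components and using that an invariant form pairs distinct isotypic components trivially unless their characters are mutually dual, one sees that the trivial-isotypic part $(V^{G})_{n}$ is paired nondegenerately with itself, so the form restricts to a nondegenerate invariant bilinear form on $V^{G}$. Together with $L(1)(V^{G})_{1}\subseteq L(1)V_{1}=0$ and CFT type, Li's criterion then gives self-duality of $V^{G}$.

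Next, for regularity — equivalently, since $V^{G}$ is of CFT type, rationality together with $C_{2}$-cofiniteness — I would induct on $|G|$, the case $G=1$ being trivial. If $G\ne 1$ is solvable then $[G,G]\subsetneq G$, so the nontrivial finite abelian group $G/[G,G]$ has a subgroup of prime index $p$; let $N\trianglelefteq G$ be its preimage in $G$. Then $G/N$ is cyclic of order $p$, $N$ is a solvable group with $|N|<|G|$, and since $N$ is normal, $G$ and hence $G/N$ act on $V^{N}$ with $(V^{N})^{G/N}=V^{G}$. By the induction hypothesis $V^{N}$ is regular, and by the previous paragraph it is also self-dual and of CFT type, so the problem is reduced to the case $G=\langle g\rangle$ of prime order.

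This prime-cyclic case is the crux, and it is exactly the content of \cite{M2,CM}; I expect it to be the main obstacle, everything above being essentially formal. The $C_{2}$-cofiniteness of $V^{\langle g\rangle}$ is obtained in \cite{M2} by analyzing the $g^{i}$-twisted $V$-modules, which inherit $C_{2}$-cofiniteness, as modules for $V^{\langle g\rangle}$ and bounding $\dim V^{\langle g\rangle}/C_{2}(V^{\langle g\rangle})$. The rationality of $V^{\langle g\rangle}$ is the deep part, established in \cite{CM}: by relating admissible $V^{\langle g\rangle}$-modules to ordinary and $g^{i}$-twisted $V$-modules via induction functors, and using that each such (twisted) $V$-module restricts to a completely reducible $V^{\langle g\rangle}$-module, one shows that the admissible $V^{\langle g\rangle}$-module category is semisimple. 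Combining this with the reduction and with the CFT-type and self-duality arguments completes the proof.
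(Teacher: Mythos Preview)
The paper does not prove this theorem; it simply records it as a known result from \cite{CM,M2} and moves on. Your proposal is therefore not competing against any argument in the paper itself.

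That said, your outline is the standard one and is correct. The CFT-type and self-duality claims are indeed elementary and hold for any finite $G$, and your treatment via Li's classification of invariant bilinear forms together with the $G$-isotypic decomposition is the usual route. The inductive reduction of regularity to the prime-cyclic case via a normal subgroup of prime index is exactly how the solvable case is handled in the literature, and you correctly identify that the entire content lies in the prime-cyclic step, which is precisely what \cite{M2} ($C_2$-cofiniteness) and \cite{CM} (rationality) supply. One small point: you should state explicitly that $V^N$ is \emph{simple} (needed to apply the inductive hypothesis in the same form), which follows from the quantum Galois theory of Dong--Mason or from \cite{CM} as part of the package; otherwise your induction is clean.
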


%called  the \emph{contragredient module} $M'$. dual
Let $g_1, g_2, g_3$ be mutually commuting automorphisms of $V$ with respective periods $T_1$, $T_2$, and $T_3$. In this case, $V$ decomposes into the direct sum of common eigenspaces for $g_1$ and $g_2$:
$$
V = \bigoplus_{0 \leq j_1 < T_1,\ 0 \leq j_2 < T_2} V^{(j_1, j_2)},
$$
where for $j_1, j_2 \in \mathbb{Z}$,
\begin{align}
V^{(j_1, j_2)} = \left\{ v \in V \mid g_s v = e^{2\pi i j_s / T_s} v,\ s = 1, 2 \right\}. \label{Vij}
\end{align}
For any complex number $\alpha$, we define
$$
(-1)^{\alpha} = e^{\alpha \pi i}.
$$
Now, we define intertwining operators among weak $g_s$-twisted modules $(M_s, Y_{M_s})$ for $s = 1, 2, 3$.

\begin{definition} \label{Intertwining operator for twisted modules}
An \emph{intertwining operator of type} $\binom{M_3}{M_1\ M_2}$ associated with the given data is a linear map 
$$
\mathcal{Y}(\cdot, z): M_1 \to \left(\text{Hom}(M_2, M_3)\right)\{z\}
$$
such that for any $w^1 \in M_1$, $w^2 \in M_2$, and any fixed $c \in \mathbb{C}$,
$$
w_{c+n}^1 w^2 = 0\ \ \text{for $n \in \mathbb{Q}$ sufficiently large},
$$
and
\begin{align}
& z_0^{-1} \left( \frac{z_1 - z_2}{z_0} \right)^{j_1 / T_1} \delta\left( \frac{z_1 - z_2}{z_0} \right) Y_{M_3}(u, z_1) \mathcal{Y}(w, z_2) \nonumber \\
& \quad - z_0^{-1} \left( \frac{z_2 - z_1}{-z_0} \right)^{j_1 / T_1} \delta\left( \frac{z_2 - z_1}{-z_0} \right) \mathcal{Y}(w, z_2) Y_{M_2}(u, z_1) \nonumber \\
= & z_2^{-1} \left( \frac{z_1 - z_0}{z_2} \right)^{-j_2 / T_2} \delta\left( \frac{z_1 - z_0}{z_2} \right) \mathcal{Y}\left( Y_{M_1}(u, z_0) w, z_2 \right), \label{Twisted Intertwining}
\end{align}
on $M_2$ for $u \in V^{(j_1, j_2)}$ with $j_1, j_2 \in \mathbb{Z}$ and $w \in M_1$, and
$$
\frac{d}{dz} \mathcal{Y}(w, z) = \mathcal{Y}(L(-1) w, z).
$$
All intertwining operators of type $\binom{M_3}{M_1\ M_2}$ form a vector space, denoted by $I_V \binom{M_3}{M_1\ M_2}$. The \textit{fusion rule} is given by
$$
N_{M_1 M_2}^{M_3} = \dim I_V \binom{M_3}{M_1\ M_2}.
$$
\end{definition}

\begin{remark} \label{r2.14}
If there exist weak $g_s$-twisted modules $(M_s, Y_{M_s})$ for $s = 1, 2, 3$ such that $N_{M_1 M_2}^{M_3} > 0$, then $g_3 = g_1 g_2$ \cite{X2}. Given this, we will always assume $g_3 = g_1 g_2$.
\end{remark}

We now recall the notion of a tensor product (see \cite{FHL, X2}).

\begin{definition} \label{d2.13}
Let $g_1, g_2$ be commuting, finite-order automorphisms of a vertex operator algebra $V$, and let $M_i$ be a $g_i$-twisted $V$-module for $i = 1, 2$. A \emph{tensor product} for the ordered pair $(M_1, M_2)$ is a pair $(M, F(\cdot, z))$, where $M$ is a weak $g_1 g_2$-twisted $V$-module and $F(\cdot, z)$ is an intertwining operator of type $\binom{M}{M_1\ M_2}$, such that the following universal property holds: For any weak $g_1 g_2$-twisted $V$-module $W$ and any intertwining operator $I(\cdot, z)$ of type $\binom{W}{M_1\ M_2}$, there exists a unique weak twisted $V$-module homomorphism $\psi$ from $M$ to $W$ such that $I(\cdot, z) = \psi \circ F(\cdot, z).$
\end{definition}

We denote by $M_1 \boxtimes_V M_2$ a generic tensor product module of $M_1$ and $M_2$, assuming its existence is verified.

For the remainder of this paper, we assume that $V = \bigoplus_{n \geq 0} V_n$ is a simple, rational, $C_2$-cofinite vertex operator algebra of CFT type, and that $G$ is a finite automorphism group of $V$ such that the conformal weight of any irreducible $g$-twisted $V$-module $M$ is nonnegative, and zero if and only if $M = V$. Under these assumptions, $V^G$ is rational and $C_2$-cofinite if $G$ is solvable \cite{CM, M2}.

\subsection{\label{subsec:Moduriance-invariance-and quantum dimension }Modular
invariance and quantum dimension}

In this subsection, we recall some results on modular invariance in orbifold theory and the notion of the $S$-matrix \cite{Z, DLM3}.

Let $\left(V, Y, \mathbf{1}, \omega\right)$ be a vertex operator algebra with central charge $c$. First, we consider the action of $\text{Aut}(V)$ on the set of twisted modules. Let $g, h \in \text{Aut}(V)$, with $g$ of finite order. If $\left(M, Y_M\right)$ is a weak $g$-twisted $V$-module, there exists a weak $h^{-1}gh$-twisted $V$-module $\left(M \circ h, Y_{M \circ h}\right)$, where $M \circ h \cong M$ as vector spaces and $Y_{M \circ h}(v, z) = Y_M(hv, z)$ for $v \in V$. This defines a right action of $\text{Aut}(V)$ on the set of weak twisted $V$-modules and their isomorphism classes. We say $M$ is \emph{$h$-stable} if $M$ and $M \circ h$ are isomorphic.

Assume that $g$ and $h$ commute. Then $h$ acts on the $g$-twisted modules. Denote by $\mathfrak{U}(g)$ the equivalence classes of irreducible $g$-twisted $V$-modules, and let
$$
\mathfrak{U}(g, h) = \left\{ M \in \mathfrak{U}(g) \mid M \circ h \cong M \right\}.
$$
Both $\mathfrak{U}(g)$ and $\mathfrak{U}(g, h)$ are finite sets, as $V$ is $g$-rational for all $g$.

Let $M$ be an irreducible $g$-twisted $V$-module, and let $G \leq \text{Aut}(V)$ with $G_M = \left\{ h \in G \mid M \circ h \cong M \right\}$. By Schur's Lemma, there is a projective representation $\phi$ of $G_M$ on $M$ such that 
$$
\phi(h) Y(u, z) \phi(h)^{-1} = Y(hu, z),
$$
for $h \in G_M$. If $h = 1$, we take $\phi(1) = 1$. Note that $g$ lies in $G_M$ as $g$ acts naturally on any admissible $g$-twisted module $M$ such that $g|_{M(n)} = e^{2\pi i n}$ for $n \in \frac{1}{T} \mathbb{Z}$. Let $\mathcal{O}_M = \left\{ M \circ g \mid g \in G \right\}$ be the $G$-orbit of $M$. The cardinality of the $G$-orbit $\mathcal{O}_M$ is $\left[ G : G_M \right]$.

Set $o(v) = v_{\text{wt} v - 1}$ for homogeneous $v \in V$. Then $o(v)$ is a degree zero operator of $v$. Let $\mathbb{H}$ be the complex upper half-plane. Here and below, set $q = e^{2\pi i \tau}$, where $\tau \in \mathbb{H}$. For $v \in V$, define
$$
Z_M(v, (g, h), \tau) = \text{tr}_M o(v) \phi(h) q^{L(0) - c/24} = q^{\lambda - c/24} \sum_{n \in \frac{1}{T} \mathbb{Z}_+} \text{tr}_{M_{\lambda + n}} o(v) \phi(h) q^n. \label{def of trace function}
$$
Then $Z_M(v, (g, h), \tau)$ is a holomorphic function on $\mathbb{H}$ (see \cite{Z, DLM3}). For simplicity, we write $Z_M(v, \tau) = Z_M(v, (g, 1), \tau)$. The function $Z_M(\tau) = Z_M(\mathbf{1}, \tau)$ is called the \emph{character} of $M$.

Recall that there is another vertex operator algebra $\left(V, Y[\ \cdot\ ], \mathbf{1}, \tilde{\omega}\right)$ associated with $V$ (see \cite{Z}). Here, $\tilde{\omega} = \omega - c/24$, and for homogeneous $v \in V$,
$$
Y[v, z] = Y(v, e^z - 1) e^{z \cdot \text{wt} v} = \sum_{n \in \mathbb{Z}} v[n] z^{n - 1}.
$$
We write $Y[\tilde{\omega}, z] = \sum_{n \in \mathbb{Z}} L(n) z^{-n-2}$. The weight of a homogeneous $v \in V$ in the second vertex operator algebra is denoted by $\text{wt}[v]$.

The modular group $\Gamma$ consists of $2 \times 2$ integral matrices with determinant 1, denoted as $\Gamma = SL_2(\mathbb{Z})$. Let $\Gamma(N)$ represent the kernel of the reduction modulo $N$ epimorphism $\pi_N: SL_2(\mathbb{Z}) \to SL_2(\mathbb{Z}_N)$. A subgroup $G_N$ of $SL_2(\mathbb{Z})$ is called a \emph{congruence subgroup of level $N$} if $N$ is the least positive integer such that $\Gamma(N) \leq G_N$. Let $P(G)$ be the set of ordered commuting pairs in $G$. For $(g, h) \in P(G)$ and $M \in \mathfrak{U}(g, h)$, $Z_M(v, (g, h), \tau)$ is a function on $V \times \mathbb{H}$. By \cite{DLM3}, the dimension of the vector space $W$ spanned by such functions is equal to
$$
\sum_{(g, h) \in P(G)} \left| \mathfrak{U}(g, h) \right|.
$$
Now we define an action of the modular group $\Gamma$ on $W$ such that
$$
Z_M|_\gamma(v, (g, h), \tau) = (c \tau + d)^{-\text{wt}[v]} Z_M(v, (g, h), \gamma \tau),
$$
where $\gamma: \tau \mapsto \frac{a \tau + b}{c \tau + d}$, and $\gamma = \begin{pmatrix} a & b \\ c & d \end{pmatrix} \in \Gamma = SL_2(\mathbb{Z})$. Let $\gamma \in \Gamma$ act on the right of $P(G)$ via
$$
(g, h) \gamma = (g^a h^c, g^b h^d).
$$

The following theorem is from \cite{Z, DLM3, DLN, DR}:

\begin{theorem}\label{modular invariance thm} 
Let $V$, $G$, and $W$ be as previously defined. Then:

\begin{enumerate}[label=(\arabic*)]

\item There exists a representation $\rho: \Gamma \to GL(W)$ such that for $\left(g, h\right) \in P(G)$, $\gamma = \begin{pmatrix} a & b \\ c & d \end{pmatrix} \in \Gamma$, and $M \in \mathfrak{U}(g, h)$, 
\[
Z_M|_\gamma(v, (g, h), \tau) = \sum_{N \in \mathfrak{U}(g^a h^c, g^b h^d)} \gamma_{M, N} Z_N(v, (g, h), \tau),
\]
where $\rho(\gamma) = (\gamma_{M, N})$. That is,
\[
Z_M(v, (g, h), \gamma \tau) = (c \tau + d)^{\text{wt}[v]} \sum_{N \in \mathfrak{U}(g^a h^c, g^b h^d)} \gamma_{M, N} Z_N(v, (g^a h^c, g^b h^d), \tau).
\]

\item The cardinalities $\left|\mathfrak{U}(g, h)\right|$ and $\left|\mathfrak{U}(g^a h^c, g^b h^d)\right|$ are equal for any $\left(g, h\right) \in P(G)$ and $\gamma \in \Gamma$. In particular, the number of irreducible $g$-twisted $V$-modules exactly equals the number of irreducible $V$-modules that are $g$-stable.

\item Each $Z_M(v, (g, h), \tau)$ is a modular form of weight $\text{wt}[v]$ on the congruence subgroup. In particular, the character $Z_M(\tau)$ is a modular function on the same congruence subgroup.

\end{enumerate}

\end{theorem}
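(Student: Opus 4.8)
The plan is to reconstruct this statement from the genus-one correlation-function machinery of \cite{Z} and its orbifold generalization in \cite{DLM3}, together with the congruence-subgroup results of \cite{DLN, DR}, proceeding in three stages matching the three assertions. The preliminary inputs are: $V$ is $g$-rational and $C_2$-cofinite for every $g\in G$ (it is even regular, $V$ being regular of CFT type and $G$ finite), so each $\mathfrak{U}(g)$ and $\mathfrak{U}(g,h)$ is finite, with the twisted Zhu algebra $A_g(V)$ of \cite{DLM3} parametrizing $\mathfrak{U}(g)$; $C_2$-cofiniteness forces the $q$-series defining $Z_M(v,(g,h),\tau)$ to converge to a holomorphic function on $\mathbb{H}$; and, by \cite{DLM3}, the span $W$ of all these functions has dimension exactly $\sum_{(g,h)\in P(G)}|\mathfrak{U}(g,h)|$, with the functions $\{Z_M(\cdot,(g,h),\tau):M\in\mathfrak{U}(g,h)\}$ linearly independent for fixed $(g,h)$ (distinguish modules of equal conformal weight using the faithful $A_g(V)$-action, others by their leading exponent $\lambda_M-c/24$).

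\emph{Stage 1: the $\Gamma$-action.} It suffices to act by the generators $T=\bigl(\begin{smallmatrix}1&1\\0&1\end{smallmatrix}\bigr)$ and $S=\bigl(\begin{smallmatrix}0&-1\\1&0\end{smallmatrix}\bigr)$ of $\Gamma$, for which $(g,h)T=(g,gh)$ and $(g,h)S=(h,g^{-1})$. Stability of $W$ under $|_T$ is elementary: $q^{L(0)-c/24}$ under $\tau\mapsto\tau+1$ produces the scalar $e^{2\pi i(\lambda_M-c/24)}$, and the relation $g|_{M(n)}=e^{2\pi i n}$ converts the $\phi(h)$-insertion into a $\phi(gh)$-insertion on the same module. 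Stability under $|_S$ is the heart of the matter: the twisted version of Zhu's recursion (\cite{Z}, extended in \cite{DLM3}) expresses genus-one $n$-point functions recursively through Weierstrass $\wp$-type functions and Eisenstein series, and from this presentation one reads off that $\tau^{-\mathrm{wt}[v]}Z_M(v,(g,h),-1/\tau)$ lies in the span of $\{Z_N(v,(h,g^{-1}),\tau):N\in\mathfrak{U}(h,g^{-1})\}$. Since $S,T$ generate $\Gamma$ and the induced operators on the finite-dimensional space $W$ satisfy the relations $S^4=1$ and $S^2=(ST)^3$, one obtains a representation $\rho:\Gamma\to GL(W)$ carrying the $(g,h)$-block to the $(g^ah^c,g^bh^d)$-block, which is assertion (1).

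\emph{Stage 2} (assertion (2)) is then formal: $\rho(\gamma)$ maps the span of $\{Z_M:M\in\mathfrak{U}(g,h)\}$ isomorphically onto the span of $\{Z_N:N\in\mathfrak{U}(g^ah^c,g^bh^d)\}$, and by the linear independence above both have dimension equal to the respective cardinality, so these are equal; taking $(g,h)=(1,h)$ and $\gamma$ with $(1,h)\gamma=(h^c,h^d)$ yields the stated special case. For the modular-form part of assertion (3), $C_2$-cofiniteness is used once more: choosing a finite spanning set of $V$ modulo $C_2(V)$ and iterating the recursion of \cite{Z,DLM3} produces, for each fixed $v$, a finite-order linear differential equation in $q\frac{d}{dq}$ satisfied by $Z_M(v,(g,h),\tau)$ with coefficients that are holomorphic modular forms for $SL_2(\mathbb{Z})$; its solution space is finite-dimensional and $\Gamma$-stable, which forces each $Z_M(v,(g,h),\tau)$ to be holomorphic at every cusp and of weight $\mathrm{wt}[v]$ under $|_\gamma$, hence a modular form of that weight on some $\Gamma(N)$.

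The remaining point — that one may use a single congruence subgroup simultaneously, equivalently that $\ker\rho$ is a congruence subgroup — is \cite{DLN, DR}, and is the main obstacle. One first shows $\rho(T)$ has finite order $N$: the central charge $c$ and all conformal weights $\lambda_M$ are rational (read off from the differential equations of the previous step via the indicial analysis at the cusp), whence $\lambda_M-c/24\in\frac1N\mathbb{Z}$, and with $\rho(S)$ this generates a finite group, so $\mathrm{Im}\,\rho$ is finite. Proving $\Gamma(N)\subseteq\ker\rho$ then requires the modular-tensor-category structure of the twisted module categories together with the Galois symmetry of the orbifold $S$-matrix and the anomaly identity $\rho(S)^2=\rho((ST)^3)=$ (charge conjugation); this is exactly where the deeper representation-theoretic machinery of \cite{DLN,DR} cannot be bypassed, since Stages 1--3 by themselves only deliver modularity on an a priori possibly non-congruence subgroup of finite index.
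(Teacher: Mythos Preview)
The paper does not prove this theorem at all: it is quoted verbatim as a background result with the attribution ``The following theorem is from \cite{Z, DLM3, DLN, DR}'' and no argument is given. Your sketch is therefore not being compared against a proof in the paper but against the cited literature, and in that respect it is a faithful outline of how those references actually proceed: parts (1) and (2) are exactly the content of \cite{DLM3} (the $T$-action is elementary, the $S$-action comes from the twisted Zhu recursion, and the block structure plus linear independence give the cardinality statement), while part (3) is the congruence property established in \cite{DLN} for the untwisted case and extended in \cite{DR} to the orbifold setting.

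One small correction: in Stage 1 you say that $\rho(S)$ and $\rho(T)$ ``satisfy the relations $S^4=1$ and $S^2=(ST)^3$'' and hence define a representation of $\Gamma$. In \cite{DLM3} this is not how the argument runs; one does not verify the relations abstractly but rather shows directly that the space $W$ is closed under $|_\gamma$ for every $\gamma\in\Gamma$ (via the recursion and the differential equation with modular coefficients), and the representation property $\rho(\gamma_1\gamma_2)=\rho(\gamma_1)\rho(\gamma_2)$ then follows from the cocycle identity $(f|_{\gamma_1})|_{\gamma_2}=f|_{\gamma_1\gamma_2}$ for the slash operator. Your version would require an independent check that the relations hold on $W$, which is circular without already knowing $W$ is $\Gamma$-stable.
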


Since the modular group $\Gamma$ is generated by $S = \begin{pmatrix} 0 & -1 \\ 1 & 0 \end{pmatrix}$ and $T = \begin{pmatrix} 1 & 1 \\ 0 & 1 \end{pmatrix}$, the representation $\rho$ is uniquely determined by $\rho(S)$ and $\rho(T)$. The matrix $\rho(S)$ is called the \emph{$S$-matrix} of the orbifold theory. Consider the special case of the $S$-transformation:
\[
Z_M\left(v, -\frac{1}{\tau}\right) = \tau^{\text{wt}[v]} \sum_{N \in \mathfrak{U}(1, g^{-1})} S_{M, N} Z_N(v, (1, g^{-1}), \tau)
\]
for $M \in \mathfrak{U}(g)$, and
\[
Z_N\left(v, (1, g), -\frac{1}{\tau}\right) = \tau^{\text{wt}[v]} \sum_{M \in \mathfrak{U}(g)} S_{N, M} Z_M(v, \tau)
\]
for $N \in \mathfrak{U}(1)$. The matrix $S = (S_{M, N})_{M, N \in \mathfrak{U}(1)}$ is called the \emph{$S$-matrix} of $V$.

Here and below, we denote by $M^0, M^1, \cdots, M^p$ all the inequivalent irreducible $V$-modules, with $M^0 \cong V$. We use $N_{i,j}^k$ to denote $\dim I_V\left(_{M^i\ M^j}^{\ \ \  M^k}\right)$, where $I_V\left(_{M^i\ M^j}^{\ \ \ M^k}\right)$ is the space of intertwining operators of type $\left(_{M^i\ M^j}^{\ \ \ M^k}\right)$. The numbers $N_{i,j}^k$ are called the \emph{fusion rules}. As usual, we use $M^{i'}$ to denote the contragredient module of $M^i$. We will denote $S_{M^i, M^j}$ by $S_{i, j}$ for $0 \le i, j \le p$. The following Verlinde formula \cite{V} is proved in \cite{H2}.

\begin{theorem} \label{Verlinde formula}
Let $V$ be a simple, rational, $C_2$-cofinite vertex operator algebra of CFT type, and assume $V \cong V'$. Let $S = (S_{i, j})_{i, j=0}^{p}$ be the $S$-matrix of $V$. Then:

\begin{enumerate}[label=(\arabic*)]

\item $\left(S^{-1}\right)_{i, j} = S_{i, j'} = S_{i', j}$ and $S_{i', j'} = S_{i, j}.$

\item $S$ is symmetric and $S^2 = (\delta_{i, j'}).$

\item $N_{i, j}^{k} = \sum_{t=0}^{p} \frac{S_{j, t} S_{i, t} S_{t, k}^{-1}}{S_{0, t}}.$

\item The $S$-matrix diagonalizes the fusion matrix $N(i) = \left(N_{i, j}^{k}\right)_{j, k = 0}^{p}$, with diagonal entries $\frac{S_{i, t}}{S_{0, t}}$ for $i, t = 0, \cdots, p$. More explicitly, $S^{-1} N(i) S = \text{diag}\left(\frac{S_{i, t}}{S_{0, t}}\right)_{t=0}^{p}.$ In particular, $S_{0, t} \neq 0$ for any $t = 0, 1, \cdots, p$.

\end{enumerate}

\end{theorem}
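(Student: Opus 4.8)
The plan is to deduce all four parts from the modular tensor category structure carried by the category $\mathcal{C}_V$ of ordinary $V$-modules. Under the standing hypotheses, the vertex tensor category machinery of Huang and Lepowsky makes $\mathcal{C}_V$ a braided monoidal category with unit object $M^0 = V$, whose monoidal product is the $\boxtimes_V$ of Definition \ref{d2.13} and whose multiplicities are the fusion rules, so that $M^i \boxtimes_V M^j \cong \bigoplus_{k=0}^{p} N_{i,j}^{\,k} M^k$. Using $C_2$-cofiniteness together with the simplicity and self-duality of $V$, one shows $\mathcal{C}_V$ is rigid with $(M^i)^{\vee} \cong M^{i'}$, and that the ribbon twist acts on $M^i$ by $e^{2\pi i \lambda_{M^i}}$, with $\lambda_{M^i}$ the conformal weight; thus $\mathcal{C}_V$ is a ribbon category. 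The remaining, and decisive, input is to match its categorical $S$-matrix with the modular $S$-matrix of Theorem \ref{modular invariance thm}.

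I would then introduce the unnormalized categorical $S$-matrix $\widetilde{S}_{i,j}$, the Hopf-link invariant: the quantum trace on $M^i \boxtimes_V M^j$ of the double braiding $c_{M^j,M^i} \circ c_{M^i,M^j}$. Formal ribbon-category identities give $\widetilde{S}_{i,j} = \widetilde{S}_{j,i}$, $\widetilde{S}_{0,j} = \dim M^j =: d_j$ and $\widetilde{S}_{i',j'} = \widetilde{S}_{i,j}$; moreover each map $M^i \mapsto \widetilde{S}_{i,t}/d_t$ is a character of the fusion ring, so decomposing $M^i \boxtimes_V M^j$ gives
\[
\widetilde{S}_{i,t}\,\widetilde{S}_{j,t} \;=\; \widetilde{S}_{0,t}\sum_{k=0}^{p} N_{i,j}^{\,k}\,\widetilde{S}_{k,t} \qquad (0 \le t \le p).
\]
The crux is the identification $S_{i,j} = \widetilde{S}_{i,j}/\mathcal{D}$, where $\mathcal{D}^2 = \sum_j d_j^2$: following Huang's analysis of genus-one conformal blocks \cite{H2}, one proves that the graded trace functions $Z_{M^j}(\tau)$ span an $SL_2(\mathbb{Z})$-invariant space on which $\tau \mapsto -1/\tau$ acts by $\widetilde{S}/\mathcal{D}$, the geometric point being that the $S$-transformation of the torus is realized by the double braiding. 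Uniqueness of the representation $\rho$ of Theorem \ref{modular invariance thm} and the normalization at $M^0 = V$ then force this equality, whence $S$ is symmetric, $S_{i',j'} = S_{i,j}$, and $S_{0,j} = d_j/\mathcal{D} \neq 0$.

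With the identification in hand the rest is formal. Non-degeneracy of the genus-one data --- equivalently invertibility of $\widetilde{S}$ --- says $\mathcal{C}_V$ is \emph{modular}; the Hopf-link computation together with non-degeneracy then gives $\widetilde{S}^2 = \mathcal{D}^2\, C$ with $C = (\delta_{i,j'})$, hence $S^2 = C$, which yields $(S^{-1})_{i,j} = (SC)_{i,j} = S_{i,j'}$ and, by symmetry, $= S_{i',j}$; this proves (1) and (2). For (3) and (4), divide the above Hopf-link identity by $\widetilde{S}_{0,t} = \mathcal{D}\, S_{0,t} \neq 0$ to obtain
\[
\frac{S_{i,t}}{S_{0,t}}\cdot\frac{S_{j,t}}{S_{0,t}} \;=\; \sum_{k=0}^{p} N_{i,j}^{\,k}\,\frac{S_{k,t}}{S_{0,t}},
\]
so that for each $t$ the vector $\bigl(S_{k,t}/S_{0,t}\bigr)_{k=0}^{p}$ is an eigenvector of the fusion matrix $N(i) = \bigl(N_{i,j}^{\,k}\bigr)_{j,k}$ with eigenvalue $S_{i,t}/S_{0,t}$. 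Since $S$ is invertible these $p+1$ eigenvectors are linearly independent, so $S^{-1} N(i) S = \mathrm{diag}\bigl(S_{i,t}/S_{0,t}\bigr)_{t}$; reading off $N_{i,j}^{\,k}$ and using $(S^{-1})_{t,k} = S_{t,k'}$ gives $N_{i,j}^{\,k} = \sum_{t=0}^{p}\frac{S_{j,t}S_{i,t}S_{t,k}^{-1}}{S_{0,t}}$.

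The main obstacle is the second paragraph: matching the categorical and modular $S$-matrices. It rests on the genuinely hard analysis underlying Huang's work --- convergence of $q$-traces of products of intertwining operators over the whole upper half-plane, the modular differential equations these satisfy and the resulting $SL_2(\mathbb{Z})$-invariance of their span, the verification that $\mathcal{C}_V$ is rigid, and the geometric identification of the $S$-modular transformation with the double braiding. Once these inputs are in place, parts (1)--(4) follow by the algebraic manipulations indicated above.
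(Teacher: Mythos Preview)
The paper does not supply its own proof of this theorem: it is stated as a quotation, introduced by ``The following Verlinde formula \cite{V} is proved in \cite{H2},'' and no argument is given. Your proposal is therefore not competing with a proof in the paper but rather sketching the content of the cited reference \cite{H2}, and in that respect it is accurate: Huang's proof does proceed by establishing the modular tensor category structure on $\mathcal{C}_V$, identifying the categorical (Hopf-link) $S$-matrix with the modular $S$-matrix arising from the transformation law of genus-one trace functions, and then reading off (1)--(4) by the standard ribbon-category manipulations you describe. Your honest acknowledgment that the hard content lies entirely in the identification step (rigidity, convergence and modular invariance of $q$-traces, and the geometric realization of $S$ via the double braiding) is exactly right; once that is granted, the algebraic deductions you give for (1)--(4) are correct.
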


The notion of quantum dimension was introduced in \cite{DJX}.

\begin{definition}
Suppose $Z_V(z)$ and $Z_M(z)$ exist. The quantum dimension of $M$ over $V$ is defined as:
\[
\text{qdim}_V M = \lim_{y \to 0^+} \frac{Z_M(iy)}{Z_V(iy)},
\]
where $y$ is real and positive.
\end{definition}

We will use the following result from \cite[Lemma 4.2]{DJX}:

\begin{lemma}
Let $V$ be a simple, rational, $C_2$-cofinite vertex operator algebra of CFT type, and assume $V \cong V'$. Let $\text{Irr}(V) = \left\{ M^0 = V, M^1, \cdots, M^p \right\}$ be the set of all inequivalent irreducible $V$-modules. Then $0 < \text{qdim}_V M^i < \infty$, and
\[
\text{qdim}_V M^i = \frac{S_{i, 0}}{S_{0, 0}},
\]
for any $0 \le i \le p$.
\end{lemma}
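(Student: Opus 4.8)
The plan is to read off the claimed identity from the $S$-transformation of characters, the asymptotics of characters as $\tau \to i\infty$, and the Verlinde formula. First I would specialize the $S$-transformation in Theorem \ref{modular invariance thm} to $g = 1$ and $v = \mathbf{1}$; since $\mathrm{wt}[\mathbf{1}] = 0$ and the set of irreducible $1$-twisted modules is just $\{M^0,\dots,M^p\}$, this reads $Z_{M^i}(-1/\tau) = \sum_{j=0}^{p} S_{i,j}\, Z_{M^j}(\tau)$. Taking $\tau = i/y$, so that $-1/\tau = iy$, yields for every $y > 0$
\[
Z_{M^i}(iy) = \sum_{j=0}^{p} S_{i,j}\, Z_{M^j}(i/y).
\]

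Next I would let $y \to 0^+$, so that $i/y \to i\infty$ and $q := e^{-2\pi/y} \to 0^+$. Each character is a convergent series $Z_{M^j}(i/y) = q^{\lambda_{M^j} - c/24}\bigl(\dim M^j(0) + O(q)\bigr)$ with $\dim M^j(0) \geq 1$. By the standing hypothesis that the conformal weight of every irreducible $V$-module is nonnegative and vanishes only for $M^0 = V$, together with the CFT-type condition (which gives $\lambda_{M^0} = 0$ and $\dim V(0) = \dim V_0 = 1$), the summand $j = 0$ strictly dominates all others. Hence $q^{c/24} Z_{M^i}(iy) \to S_{i,0}$ and, in particular, $q^{c/24} Z_V(iy) \to S_{0,0}$, and since $S_{0,0} \neq 0$ by Theorem \ref{Verlinde formula}(4), the quotient exists:
\[
\mathrm{qdim}_V M^i = \lim_{y \to 0^+} \frac{Z_{M^i}(iy)}{Z_V(iy)} = \frac{S_{i,0}}{S_{0,0}}.
\]

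For the bounds, note first that $S$ is symmetric by Theorem \ref{Verlinde formula}(2), so $S_{i,0} = S_{0,i}$, which is nonzero by part (4); hence $S_{i,0}/S_{0,0}$ is a finite nonzero real number. Positivity then follows because, for every $y > 0$, writing $\tau = iy$ directly gives $Z_{M^i}(iy) = e^{-2\pi y(\lambda_{M^i} - c/24)}\sum_{n \geq 0} (\dim M^i_{\lambda_{M^i} + n})\, e^{-2\pi y n} > 0$, and likewise $Z_V(iy) > 0$; so the limit of the ratio is $\geq 0$, and being nonzero it is $> 0$.

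The step that does the real work is the domination of the $j = 0$ summand, i.e.\ the fact that $V = M^0$ is the \emph{unique} irreducible $V$-module realizing the minimal conformal weight; this is precisely the standing positivity hypothesis, and without it the limit would only be a ratio of sums $\sum_{\lambda_{M^j} = \lambda_{\min}} S_{i,j}\dim M^j(0)$ over the minimal-weight modules, which need not reduce to $S_{i,0}/S_{0,0}$. The only other point to handle carefully is the interchange of $\lim_{y \to 0^+}$ with the finite sum over $j$ and with the $q$-expansion of each $Z_{M^j}$, which is legitimate since these series converge on the upper half-plane (Theorem \ref{modular invariance thm}).
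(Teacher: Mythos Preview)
Your argument is correct: the $S$-transformation of characters combined with the standing positivity hypothesis on conformal weights (which singles out $M^0=V$ as the unique module of minimal weight) gives exactly the asymptotic you need, and Theorem~\ref{Verlinde formula}(2),(4) then supply finiteness and nonvanishing. The positivity step via $Z_{M^i}(iy)>0$ is also fine.

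The paper itself does not prove this lemma at all; it simply quotes it from \cite[Lemma 4.2]{DJX}. So you have in effect reproduced the standard proof from that reference. One small remark: the statement of the lemma, read in isolation, does not list the hypothesis ``$\lambda_{M^j}>0$ for $j\neq 0$,'' so you are right to flag that this is what makes the $j=0$ term dominate---in the paper it enters through the standing assumptions announced at the end of Section~2.1, and your proof would not go through without it.
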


\subsection{Basics on category theory }

Let $\mathbb{K}$ be an algebraically closed field of characteristic zero. Below, we recall some basic concepts from category theory \cite{BK, CKM, KO, Mu}.

\begin{definition}
A \emph{fusion category} is a $\mathbb{K}$-linear, semisimple, rigid tensor category $\mathcal{C}$ with finitely many isomorphism classes of simple objects, finite-dimensional morphism spaces, and such that the unit object $1$ in $\mathcal{C}$ is simple.
\end{definition}

Let $\mathcal{C}$ be a fusion category with simple objects $X_1, \dots, X_n$. One can define non-negative integers $a_{ij}^k \in \mathbb{N}$ via the relation
\[
X_i \otimes X_j \cong \bigoplus_{k=1}^{n} a_{ij}^{k} X_k.
\]

\begin{definition}
A fusion category is \emph{braided} if it is equipped with a natural isomorphism $c_{X, Y}: X \otimes Y \cong Y \otimes X$ that satisfies the hexagon axioms.
\end{definition}

\begin{definition}
A \emph{modular tensor category} is a semisimple ribbon category $\mathcal{C}$ that satisfies the following properties:
\begin{enumerate}[label=(\roman*)]
    \item $\mathcal{C}$ has a finite number of isomorphism classes of simple objects.
    \item The matrix $\tilde{s} = (\tilde{s}_{ij})_{i, j \in I}$ is invertible.
\end{enumerate}
\end{definition}

\begin{definition}
Let $\left(\mathcal{C}, \boxtimes, 1, l, r, a, \mathcal{R}\right)$ be a braided tensor category with tensor unit $1$, braiding $\mathcal{R}$, and associativity isomorphism $a$. A \emph{commutative associative algebra} in $\mathcal{C}$ is an object $A$ of $\mathcal{C}$ equipped with morphisms $\mu: A \boxtimes A \to A$ and $\iota_A: 1 \to A$ such that:
\begin{enumerate}[label=(\arabic*)]
    \item \emph{Associativity}: The compositions $\mu \circ (\mu \boxtimes 1_A) \circ a_{A, A, A}$ and $\mu \circ (1_A \boxtimes \mu)$ from $A \boxtimes (A \boxtimes A) \to A$ are equal.
    \item \emph{Commutativity}: The morphisms $\mu$ and $\mu \circ \mathcal{R}_{A, A}: A \boxtimes A \to A$ are equal.
    \item \emph{Unit}: The composition $\mu \circ (\iota_A \boxtimes 1_A) \circ \iota_A^{-1}: A \to A$ is the identity.
\end{enumerate}
\end{definition}

The following result is from \cite{Lu}:

\begin{theorem} \label{Lustig fusion algebra}
Let $\mathcal{C}$ be a fusion category with simple objects $X_1, \dots, X_n$. Let $K(\mathcal{C}) = \bigoplus_{i=1}^{n} \mathbb{C} X_i$ with $X_i X_j = \sum a_{ij}^k X_k$ be its fusion ring, where $a_{ij}^k$ are the fusion rules. Then $K(\mathcal{C})$ is a semisimple associative algebra.
\end{theorem}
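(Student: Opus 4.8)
The plan is to check the ring axioms directly from the monoidal structure and then deduce semisimplicity of the finite–dimensional $\mathbb{C}$–algebra $K(\mathcal C)$ by producing a positive–definite Hermitian form for which left multiplication is a $*$–representation — essentially the $C^{*}$–algebra argument. First I would record the formal input: relabel so that $X_1=\mathbf 1$ is the (simple) tensor unit, so that $X_1X_i=X_iX_1=X_i$, and note that the associativity constraint $a$ of $\mathcal C$ yields $\sum_l a_{ij}^l a_{lk}^m=\sum_l a_{jk}^l a_{il}^m$; hence $K(\mathcal C)$ is a unital associative $\mathbb C$–algebra of dimension $n$. Since $\mathcal C$ is rigid and $X^{**}\cong X$ in a fusion category, $i\mapsto i^{*}$ (defined by $X_i^{*}\cong X_{i^{*}}$) is an involution of $\{1,\dots,n\}$, and Frobenius reciprocity together with $(X_i\otimes X_j)^{*}\cong X_j^{*}\otimes X_i^{*}$ give the two identities I will need:
\begin{equation*}
a_{ij}^{1}=\dim\operatorname{Hom}(X_i\otimes X_j,\mathbf 1)=\dim\operatorname{Hom}(X_j,X_i^{*})=\delta_{j,\,i^{*}},\qquad a_{ij}^{\,k^{*}}=a_{\,j^{*}i^{*}}^{\,k}.
\end{equation*}

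Next I would introduce the conjugate–linear map $*\colon K(\mathcal C)\to K(\mathcal C)$, $\big(\sum_i c_iX_i\big)^{*}=\sum_i\overline{c_i}\,X_{i^{*}}$, and the $\mathbb C$–linear functional $\tau\big(\sum_i c_iX_i\big)=c_1$. The identity $a_{ij}^{\,k^{*}}=a_{\,j^{*}i^{*}}^{\,k}$ says precisely that $*$ is an algebra anti–automorphism with $**=\mathrm{id}$, while $a_{ij}^{1}=a_{ji}^{1}$ (immediate from $j=i^{*}\iff i=j^{*}$) says that $\tau$ is a trace, $\tau(xy)=\tau(yx)$. Combining these two facts with associativity shows that the sesquilinear form $\langle x,y\rangle:=\tau(xy^{*})$ satisfies the invariance $\langle xy,z\rangle=\langle y,x^{*}z\rangle$. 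The key observation is that in the basis $\{X_i\}$ this form is nothing but the standard Hermitian inner product: $\langle X_i,X_j\rangle=\tau(X_iX_{j^{*}})=a_{i\,j^{*}}^{1}=\delta_{i,j}$, so $\langle\cdot,\cdot\rangle$ is positive definite.

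Finally, regarding $K(\mathcal C)$ as a finite–dimensional Hermitian inner–product space and writing $L_x$ for left multiplication by $x$, the invariance of the form gives $\langle L_x a,b\rangle=\langle a,L_{x^{*}}b\rangle$, i.e. $L_x^{\dagger}=L_{x^{*}}$. Let $J=J(K(\mathcal C))$ be the Jacobson radical; since $*$ is an anti–linear anti–automorphism it carries nilpotent ideals to nilpotent ideals, so $J^{*}=J$. For $x\in J$ we then have $xx^{*}\in J$, hence $xx^{*}$ is nilpotent, hence $L_{xx^{*}}=L_xL_x^{\dagger}$ is a self–adjoint nilpotent operator on a finite–dimensional Hermitian space and is therefore $0$; evaluating at $\mathbf 1$ gives $xx^{*}=L_{xx^{*}}(\mathbf 1)=0$, whence $\langle x,x\rangle=\tau(xx^{*})=0$ and $x=0$. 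Thus $J=0$, and a finite–dimensional unital algebra over a field with zero Jacobson radical is semisimple.

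I expect the only real work to lie in the bookkeeping behind the displayed reciprocity identities — justifying that $i\mapsto i^{*}$ is a genuine involution (using $X^{**}\cong X$) and that $a_{ij}^{\,k^{*}}=a_{\,j^{*}i^{*}}^{\,k}$, which reduces to $(X_i\otimes X_j)^{*}\cong X_j^{*}\otimes X_i^{*}$ and the fact that dualization is an anti–equivalence — and then in checking that $*$, $\tau$ and the multiplication fit together to make $\langle\cdot,\cdot\rangle$ invariant. Once those formalities are in place, the positivity statement and the self–adjoint–nilpotent step are immediate, and no numerical computation with the fusion rules is required.
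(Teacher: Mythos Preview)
Your argument is correct, and it is essentially Lusztig's original proof for $\mathbb{Z}_{+}$--based rings, phrased in the language of fusion categories; the paper itself does not supply a proof but merely cites \cite{Lu} for this statement. One minor remark: in your final step you write $L_{xx^{*}}=L_xL_x^{\dagger}$, which is indeed self-adjoint, but it is worth saying explicitly why $L_{xx^{*}}$ is nilpotent --- namely because $xx^{*}\in J$ implies $(xx^{*})^{N}=0$ for some $N$, hence $L_{xx^{*}}^{N}=L_{(xx^{*})^{N}}=0$; once that is on the page the self-adjoint--nilpotent--hence--zero step is transparent.
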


\subsection{The category $\text{Rep}\left(V\right)$ and fusion rules}

In this section, we introduce the category $\text{Rep}(V)$ and establish some properties of the fusion rules for twisted $V$-modules. First, we recall the following result from \cite{H3}:

\begin{theorem}
If $V$ is a simple, rational, $C_2$-cofinite, CFT-type, self-dual vertex operator algebra, then $\mathcal{C}_V$ is a modular tensor category.
\end{theorem}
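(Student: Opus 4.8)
The target category is $\mathcal{C}_V$, the category of ordinary $V$-modules, and the plan is to recover the argument of \cite{H3}, assembling the modular tensor structure in four stages. Since $V$ is rational and $C_2$-cofinite, $\mathcal{C}_V$ is a $\mathbb{C}$-linear semisimple abelian category whose simple objects are exactly the finitely many irreducible modules $M^0=V, M^1, \dots, M^p$, with finite-dimensional morphism spaces and with the simple unit object $V$; it remains to build the tensor structure and verify modularity. All the hard analytic input will enter through the modular-invariance results of Theorem \ref{modular invariance thm} and the Verlinde formula of Theorem \ref{Verlinde formula}, both of which use self-duality of $V$.

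First I would equip $\mathcal{C}_V$ with a braided tensor category structure. Under rationality and $C_2$-cofiniteness the $P(z)$-tensor product bifunctor $\boxtimes$ exists on $\mathcal{C}_V$; the associativity isomorphisms are constructed from the convergence of products and iterates of intertwining operators together with the comparison between them, and the pentagon identity is a consequence of the associativity of intertwining operators. The braiding $c_{M,N}$ is obtained from the skew-symmetry of intertwining operators and satisfies the hexagon axioms. This makes $\mathcal{C}_V$ a braided tensor category with unit $V$.

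The heart of the matter is rigidity. For each $i$ the candidate dual of $M^i$ is the contragredient module $M^{i'}$, with evaluation $M^{i'}\boxtimes M^i\to V$ and coevaluation $V\to M^i\boxtimes M^{i'}$ built from the canonical module intertwining operator $Y_{M^i}(\cdot,z)$ and its adjoint via the pairing in \eqref{contragredient module}. To verify the zig-zag identities one must show that the resulting composite $M^i\to M^i$ is a nonzero scalar, and this scalar can be expressed, as in \cite{H3}, through the matrices that govern the associativity isomorphisms among the relevant intertwining operators. Its non-vanishing is equivalent to $S_{0,i}\neq 0$, i.e.\ to $\mathrm{qdim}_V M^i = S_{i,0}/S_{0,0} > 0$, which is precisely the quantum-dimension lemma together with Theorem \ref{Verlinde formula}, both resting on the modular invariance of Theorem \ref{modular invariance thm}. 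I expect this step — establishing the non-vanishing of these scalars, i.e.\ the passage from modular invariance to rigidity — to be the main obstacle.

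Finally, the twist $\theta_M = e^{2\pi i L(0)}$ defines a natural automorphism of the identity functor of $\mathcal{C}_V$; the $L(-1)$-derivative property of intertwining operators and the $L(0)$-grading give the balancing identity $\theta_{M\boxtimes N} = (\theta_M\boxtimes\theta_N)\circ c_{N,M}\circ c_{M,N}$ and the compatibility $\theta_{M'} = (\theta_M)^{*}$, so $\mathcal{C}_V$ is a ribbon category, hence a semisimple ribbon category with finitely many simple objects. Its $\tilde s$-matrix, with entries $\tilde s_{ij} = \mathrm{tr}\!\left(c_{M^j,M^i}\circ c_{M^i,M^j}\right)$, is identified in \cite{H3} with a nonzero rescaling of the $S$-matrix $(S_{i,j})_{i,j=0}^{p}$ of $V$, which is invertible by Theorem \ref{Verlinde formula}(2) since $S^2 = (\delta_{i,j'})$. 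Therefore $\tilde s$ is non-degenerate and $\mathcal{C}_V$ is a modular tensor category.
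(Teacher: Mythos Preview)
The paper does not prove this theorem at all: it is stated immediately after the sentence ``First, we recall the following result from \cite{H3}'' and is used throughout as a black box. There is no argument to compare against; the paper simply cites Huang.

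Your proposal is a reasonable high-level outline of Huang's actual proof in \cite{H3}, and the ingredients you name (the $P(z)$-tensor product, associativity and braiding from convergence and skew-symmetry of intertwining operators, rigidity via non-vanishing of the relevant scalars tied to $S_{0,i}\neq 0$, the ribbon twist $e^{2\pi i L(0)}$, and non-degeneracy of $\tilde s$ via the $S$-matrix) are the right ones. But for the purposes of this paper no such argument is needed or given; a one-line citation to \cite{H3} is all that appears.
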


Assume that $G$ is a finite abelian automorphism group of $V$. Let $\mathcal{C}_V$ and $\mathcal{C}_{V^G}$ represent the categories of ordinary $V$-modules and ordinary $V^G$-modules, respectively. Both $\mathcal{C}_V$ and $\mathcal{C}_{V^G}$ are modular tensor categories \cite{H3}. Furthermore, $V$ is a commutative associative algebra in $\mathcal{C}_{V^G}$ \cite{KO, CKM}, where $V = \bigoplus_{\chi \in \text{Irr}(G)} V^\chi$, and $\text{Irr}(G)$ is the set of irreducible characters of $G$, with $V^\chi$ representing irreducible $V^G$-modules (i.e., simple objects in $\mathcal{C}_{V^G}$).

Now, we recall the following definition from \cite{KO, CKM}:

\begin{definition}\label{Rep(V)-def}
Denote by $\text{Rep}(V)$ the subcategory of $\mathcal{C}_{V^G}$ consisting of every $V^G$-module $W$, along with a $V^G$-intertwining operator $Y_W(\cdot, z)$ of type $\left(_{V W}^{\ W}\right)$, such that the following conditions are satisfied:

\begin{enumerate}[label=(\arabic*)]
    \item \textbf{(Associativity)} For any $u, v \in V$, $w \in W$, and $w' \in W'$, the formal series
    \[
    \left\langle w', Y_W(u, z_1) Y_W(v, z_2) w \right\rangle
    \]
    and
    \[
    \left\langle w', Y_W(Y(u, z_1 - z_2)v, z_2) w \right\rangle
    \]
    converge on the domains $\left|z_1\right| > \left|z_2\right| > 0$ and $\left|z_2\right| > \left|z_1 - z_2\right| > 0$, respectively, to multivalued analytic functions that coincide on their common domain.

    \item \textbf{(Unit)} $Y_W(\mathbf{1}, z) = \text{Id}_W$.
\end{enumerate}
\end{definition}

Note that the objects of $\text{Rep}(V)$ are more general than ordinary $V$-modules, as we do not require the $V$-intertwining operator $Y_W(\cdot, x)$ for a module $W$ in $\text{Rep}(V)$ to be restricted to integral powers of the formal variable $x$. The following result is given in \cite{K1, K2, DLXY}:

\begin{lemma}\label{l3.1}
\begin{enumerate}[label=(\arabic*)]
    \item If $W$ is a $g$-twisted $V$-module with $g \in G$, then $W$ is an object of $\text{Rep}(V)$.
    \item If $W$ is a simple object in $\text{Rep}(V)$, then $W$ is an irreducible $g$-twisted $V$-module for some $g \in G$.
\end{enumerate}
\end{lemma}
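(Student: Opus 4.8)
\emph{Proof plan.} The two parts are proved separately; Part (1) is a direct verification, and Part (2) is a reconstruction that runs Part (1) in reverse.

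\textbf{Part (1).} Let $(W,Y_W)$ be a $g$-twisted $V$-module with $g\in G$ of order $T$. Since $g\in G$ we have $V^G\subseteq V^0$, so restricting the $g$-twisted Jacobi identity to $u\in V^G$ kills the exponent $r/T$ and turns it into the ordinary Jacobi identity; hence $(W,Y_W|_{V^G})$ is a weak $V^G$-module, and since $\omega\in V^G$ its $L(0)$-grading coincides with the one $W$ already carries as an ordinary twisted module, so its $L(0)$-eigenspaces are finite-dimensional and bounded below. As $G$ is abelian, hence solvable, $V^G$ is rational and $C_2$-cofinite of CFT type, hence regular, so $W$ is a direct sum of irreducible ordinary $V^G$-modules; finite-dimensionality of the graded pieces together with $|\mathrm{Irr}(V^G)|<\infty$ forces this sum to be finite, so $W\in\mathcal C_{V^G}$. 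Next, viewing $V=\bigoplus_{\chi\in\mathrm{Irr}(G)}V^\chi$ as a $V^G$-module, note that $Y_W(\cdot,z)\colon V\to(\mathrm{Hom}(W,W))\{z\}$ has the lower-truncation property $v_nw=0$ for $n\gg 0$ and the $L(-1)$-derivative property by the twisted-module axioms, and, by the previous step, the twisted Jacobi identity for $Y_W$ with $u\in V^G\subseteq V^0$ is exactly the Jacobi identity of Definition~\ref{Intertwining operator for twisted modules} with $g_1=g_2=g_3=1$; so $Y_W$ is an intertwining operator of type $\binom{W}{V\ W}$ of $V^G$-modules, $V^G$ acting on $W$ via $Y_W|_{V^G}$. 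Finally $Y_W(\mathbf 1,z)=\mathrm{Id}_W$ is a twisted-module axiom, and the convergence and associativity required in Definition~\ref{Rep(V)-def} are the standard duality properties of products and iterates of (twisted) vertex operators over a rational, $C_2$-cofinite vertex operator algebra of CFT type. Hence $(W,Y_W)\in\mathrm{Rep}(V)$.

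\textbf{Part (2).} Let $W$ be a simple object of $\mathrm{Rep}(V)$, i.e.\ a simple module over the commutative algebra $V=\bigoplus_{\chi\in\mathrm{Irr}(G)}V^\chi$ in the modular tensor category $\mathcal C_{V^G}$; in particular $W\in\mathcal C_{V^G}$ is a finite direct sum of irreducible ordinary $V^G$-modules. The first task is to produce the group element. The $\mathrm{Irr}(G)$-grading of the algebra $V$ (coming from $G$ acting by automorphisms, so that $V^\chi\cdot V^\psi\subseteq V^{\chi\psi}$) equips the $V$-module $W$ with a monodromy datum: for $v\in V^\chi$, the operator $Y_W(v,z)$ has a definite monodromy under $z\mapsto e^{2\pi i}z$, and the plan is to use the associativity and commutativity of $V$ and the simplicity of $W$—which makes $W$ connected as a module over $V$—to show this monodromy is a scalar depending only on $\chi$ and multiplicative in $\chi$. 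Since $\mathrm{Irr}(G)$ is a finite abelian group, a multiplicative assignment $\mathrm{Irr}(G)\to\mathbb C^\times$ is evaluation at a unique $g\in G$; writing $T=|g|$ and $V=\bigoplus_{r=0}^{T-1}V^r$ for the $g$-eigenspace decomposition, this says precisely that $Y_W(u,z)$ for $u\in V^r$ involves only the powers $z^{-n-1}$ with $n\in\tfrac rT+\mathbb Z$.

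With this $g$ in hand, the second task is to verify that $(W,Y_W)$ is a $g$-twisted $V$-module. Starting from the intertwining-operator Jacobi identity for $Y_W$ of type $\binom{W}{V\ W}$ and using the associativity and commutativity of $V$ to relate $Y_W(u,z_1)$, $Y_W(v,z_2)$ and $Y_W(Y(u,z_1-z_2)v,z_2)$ for arbitrary $u,v\in V$, one recovers exactly the $g$-twisted Jacobi identity, including its $\bigl(\tfrac{z_1-z_0}{z_2}\bigr)^{-r/T}$ factor; this is the reconstruction of a twisted module from a module over the algebra $V$, carried out in \cite{K1,K2,DLXY}, and it is the converse of the reduction in Part (1). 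The remaining axioms of an ordinary $g$-twisted $V$-module—$Y_W(\mathbf 1,z)=\mathrm{Id}_W$, lower truncation, and a $\mathbb C$-grading by $L(0)$ with finite-dimensional eigenspaces that vanish for all sufficiently small weights—hold because $Y_W\in\mathrm{Rep}(V)$ and $W\in\mathcal C_{V^G}$. Finally, any nonzero $g$-twisted $V$-submodule of $W$ is closed under every $Y_W(v,z)$, hence is a subobject of $W$ in $\mathrm{Rep}(V)$; by simplicity it equals $W$, so $W$ is irreducible as a $g$-twisted $V$-module.

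\textbf{Where the difficulty lies.} Part (1) and the bookkeeping in Part (2) are routine. The substantive content is the two-step reconstruction in Part (2): first, showing that the monodromy of a simple $\mathrm{Rep}(V)$-object against the graded pieces $V^\chi$ is governed by a single element $g\in G$—this is where the algebra structure of $V$ and the connectedness of $W$ enter—and second, extracting the fractional-power $\delta$-function form of the $g$-twisted Jacobi identity from the ordinary intertwining-operator axioms together with the associativity and commutativity of $V$. Both steps depend on the modularity of $\mathcal C_{V^G}$ and on the dictionary between intertwining operators and the braiding, and constitute the essential input of \cite{K1,K2,DLXY}.
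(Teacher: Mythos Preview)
The paper does not prove this lemma at all; it merely cites it from \cite{K1,K2,DLXY} and moves on. Your outline is therefore not competing against a proof in the paper but against those references, and your sketch is essentially the argument found there: Part~(1) is the routine restriction-and-duality check, and Part~(2) is Kirillov's monodromy argument identifying the twisting element $g$ from the $\widehat G$-grading of $V$, followed by reconstruction of the twisted Jacobi identity. So your approach matches the intended one.

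Two small points worth tightening. In Part~(1), you invoke regularity of $V^G$ to argue that $W$ lies in $\mathcal C_{V^G}$, but this is more direct than you make it: since $\omega\in V^G$, the $L(0)$-grading on $W$ as an ordinary $g$-twisted $V$-module already exhibits $W$ as an ordinary $V^G$-module with finite-dimensional graded pieces bounded below; rationality of $V^G$ then gives finite length automatically. In Part~(2), your description of the monodromy step is correct in spirit but slightly informal: the precise statement is that for a simple $V$-module $W$ in $\mathrm{Rep}(V)$, the double braiding $c_{W,V^\chi}\circ c_{V^\chi,W}$ acts on $V^\chi\boxtimes_{V^G}W$ compatibly with the $V$-action, and simplicity of $W$ forces this to be a scalar $\chi(g)$ for a unique $g\in G$. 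You have correctly flagged this as the substantive step and correctly attributed it; just be aware that turning the phrase ``monodromy under $z\mapsto e^{2\pi i}z$'' into the categorical double braiding, and then into the fractional $\delta$-function form, is exactly the work done in the cited references rather than something one can write down in a line.
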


Therefore, we have:
\[
\text{Rep}(V) = \bigoplus_{g \in G} \mathcal{C}_{V, g},
\]
where $\mathcal{C}_{V, g}$ is the category of $g$-twisted $V$-modules. The following result can be found in \cite{KO, CKM}:

\begin{lemma}\label{Rep(V)-FusionAlg}
The category $\text{Rep}(V)$ is a fusion category.
\end{lemma}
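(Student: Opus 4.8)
The plan is to identify $\text{Rep}(V)$ with the category of modules over the commutative associative algebra $V$ inside the modular tensor category $\mathcal{C}_{V^G}$, and then to deduce the fusion-category axioms from the general theory of connected \'etale (i.e. commutative separable) algebras in braided fusion categories developed in \cite{KO, CKM}. Note that the braided tensor structure on $\text{Rep}(V)$ -- the tensor product $\boxtimes_V$ obtained by lifting $\boxtimes_{V^G}$ over the algebra $V$, together with its associativity, unit, and braiding isomorphisms -- is already supplied by \cite{KO, CKM}, since $\mathcal{C}_{V^G}$ is a modular tensor category and $V$ is a commutative associative algebra object in it. So the remaining content of the lemma is that this braided tensor category is $\mathbb{C}$-linear semisimple, rigid, has only finitely many isomorphism classes of simple objects and finite-dimensional morphism spaces, and has a simple unit object.

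First I would dispose of the bookkeeping axioms. The unit object of $\text{Rep}(V)$ is $V$ regarded as a module over itself, and it is simple: $V$ is a simple vertex operator algebra, and equivalently $\dim\text{Hom}_{\text{Rep}(V)}(V,V)=\dim\text{Hom}_{V^G}(V^G,V)=1$ because the trivial character of $G$ occurs exactly once in $V=\bigoplus_{\chi\in\text{Irr}(G)}V^\chi$. By Lemma \ref{l3.1}, $\text{Rep}(V)=\bigoplus_{g\in G}\mathcal{C}_{V,g}$. Since $V$ is rational and $C_2$-cofinite, it is $g$-rational for every $g\in G$ (this is used already in Section 2.2), so each $\mathcal{C}_{V,g}$ is semisimple with only finitely many irreducible objects; as $G$ is finite, $\text{Rep}(V)$ is $\mathbb{C}$-linear semisimple with finitely many simple objects. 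Finite-dimensionality of the morphism spaces is immediate, as every object of $\text{Rep}(V)$ decomposes into ordinary twisted modules, whose homogeneous components are finite-dimensional.

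The main step is rigidity. Here I would use that $\mathcal{C}_{V^G}$ is a modular, hence rigid, braided fusion category: since $G$ is abelian, Theorem \ref{CM} gives that $V^G$ is simple, rational, $C_2$-cofinite, of CFT type, and self-dual, whence $\mathcal{C}_{V^G}$ is modular. Next I would check that $V$ is a \emph{connected \'etale} algebra in $\mathcal{C}_{V^G}$: it is connected (haploid) by the multiplicity-one statement above, and it is separable because, by the previous paragraph, its category of modules $\text{Rep}(V)$ is semisimple (alternatively, separability is witnessed directly by the nondegenerate invariant bilinear form on the self-dual algebra $V$, making $V$ a special symmetric Frobenius algebra). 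By \cite{KO, CKM}, the category of modules over a connected \'etale algebra in a rigid braided fusion category is itself a (rigid) fusion category; concretely, for a twisted module $W\in\mathcal{C}_{V,g}$ the dual object is the contragredient module $W'\in\mathcal{C}_{V,g^{-1}}$, with evaluation and coevaluation built from the duality in $\mathcal{C}_{V^G}$ composed with the algebra structure maps of $V$. Combining this rigidity with the bookkeeping of the preceding paragraph shows that $\text{Rep}(V)$ satisfies all the axioms of a fusion category.

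I expect rigidity to be the main obstacle -- specifically, confirming that the contragredient/invariant-pairing construction for genuinely \emph{twisted} modules produces duality morphisms \emph{in $\text{Rep}(V)$} obeying the snake (zig-zag) identities, which requires knowing that the canonical pairing $W\times W'\to\mathbb{C}$ is realized by an intertwining operator descending to a morphism $W\boxtimes_V W'\to V$, and likewise for the coevaluation. The cleanest way to avoid re-deriving these identities by hand is to verify the hypotheses of the \'etale-algebra formalism of \cite{KO, CKM} -- $V$ connected \'etale in the modular tensor category $\mathcal{C}_{V^G}$ -- and quote that framework, which then delivers rigidity, and in fact the entire fusion-category structure, at once.
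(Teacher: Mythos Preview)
Your proposal is correct and follows the same route as the paper: the paper does not give an independent proof of this lemma but simply records it as a result ``found in \cite{KO, CKM},'' and your argument is precisely an unpacking of why those references apply---namely, that $V$ is a connected \'etale (commutative separable) algebra in the modular tensor category $\mathcal{C}_{V^G}$, so that its module category inherits a rigid fusion structure. Your additional bookkeeping (simplicity of the unit, semisimplicity via $g$-rationality, finiteness) is accurate and fills in details the paper leaves implicit in the citation.
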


Note that  there is another tensor product in Definition \ref{d2.13} on 
$\text{Rep}(V).$ It is shown in \cite{DLXY} that these two  tensor products are the same. In particular, the tensor product defined in  Definition \ref{d2.13} is also associative.  

We will need the following symmetric property of fusion rules for twisted modules.

\begin{proposition}\label{Fusion rule prop twisted}
Let $g_1, g_2$ be commuting finite-order automorphisms of a vertex operator algebra $V$. Let $M_i$ be a $g_i$-twisted $V$-module for $i = 1, 2, 3$, where $g_3 = g_1 g_2$. Then:
\begin{gather}
N_{M_1, M_2}^{M_3} = N_{M_2 \circ g_1, M_1}^{M_3} = N_{M_2, M_1 \circ g_2^{-1}}^{M_3} = N_{M_1 \circ g_2^{-1}, (M_3)'}^{(M_2)'}.\label{sym prop of fusion rules}
\end{gather}
\end{proposition}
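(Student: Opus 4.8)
The plan is to establish the three isomorphisms of fusion-rule spaces in \eqref{sym prop of fusion rules} one at a time, by constructing explicit linear bijections between the relevant spaces of intertwining operators; these are the twisted analogues of the classical symmetries of fusion rules from \cite{FHL}, and the argument should parallel the one given there for untwisted modules, with care taken to track the fractional powers of the formal variables that appear in the twisted Jacobi identity \eqref{Twisted Intertwining}. Throughout I will use the decomposition $V=\bigoplus_{j_1,j_2}V^{(j_1,j_2)}$ with respect to the commuting pair $(g_1,g_2)$ as in \eqref{Vij}, and Remark \ref{r2.14} to justify that all the third modules appearing are $g_3$-twisted with $g_3=g_1g_2$.

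First I would prove $N_{M_1,M_2}^{M_3}=N_{M_2\circ g_1,M_1}^{M_3}$. Given an intertwining operator $\mathcal{Y}$ of type $\binom{M_3}{M_1\ M_2}$, I define a candidate operator $\widetilde{\mathcal{Y}}$ of type $\binom{M_3}{M_2\circ g_1\ M_1}$ by a "skew-symmetry" formula of the shape
\[
\widetilde{\mathcal{Y}}(w^2,z)w^1 = e^{zL(-1)}\,\mathcal{Y}(w^1,e^{\pi i}z)\,w^2,
\]
(up to a scalar/phase that I will fix by matching the two sides of the twisted Jacobi identity). One checks that $\widetilde{\mathcal{Y}}$ satisfies the $L(-1)$-derivative property automatically from $e^{zL(-1)}$, and that it satisfies \eqref{Twisted Intertwining} for the reordered triple by using the commutator formula obeyed by $\mathcal{Y}$ together with the conjugation identity $e^{zL(-1)}Y_M(u,z_1)e^{-zL(-1)}=Y_M(u,z_1+z)$; the replacement $M_2\rightsquigarrow M_2\circ g_1$ is exactly what is needed so that the powers $j_1/T_1$ that get shifted under $z\mapsto e^{\pi i}z$ land correctly, since $Y_{M_2\circ g_1}(u,z)=Y_{M_2}(g_1u,z)$. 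The map $\mathcal{Y}\mapsto\widetilde{\mathcal{Y}}$ is clearly linear, and it has an inverse of the same form, giving the first equality.

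For the second equality $N_{M_1,M_2}^{M_3}=N_{M_2,M_1\circ g_2^{-1}}^{M_3}$ I would apply the same construction again, now to $\widetilde{\mathcal{Y}}$, composing the two skew-symmetries; alternatively one derives it directly, observing that applying skew-symmetry twice returns an intertwining operator of the original type but with $M_2$ replaced by $M_2\circ g_1$ on one side and then this is reconciled with the first identity. For the third equality $N_{M_1,M_2}^{M_3}=N_{M_1\circ g_2^{-1},(M_3)'}^{(M_2)'}$ I would use the "adjoint" or "contragredient" construction: given $\mathcal{Y}$ of type $\binom{M_3}{M_1\ M_2}$, define $\mathcal{Y}^{\,\mathrm{adj}}$ of type $\binom{(M_2)'}{M_1\circ g_2^{-1}\ (M_3)'}$ by
\[
\langle \mathcal{Y}^{\,\mathrm{adj}}(w^1,z)w^{3\prime},\,w^2\rangle
= \langle w^{3\prime},\,\mathcal{Y}\!\big(e^{zL(1)}(-z^{-2})^{L(0)}w^1,\,z^{-1}\big)w^2\rangle,
\]
using the contragredient vertex operator \eqref{contragredient module} on $M_3'$ and $M_2'$; the appearance of $M_1\circ g_2^{-1}$ is dictated by the way the twisting data transforms under $z\mapsto z^{-1}$ in the twisted Jacobi identity. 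That $\mathcal{Y}^{\,\mathrm{adj}}$ is a genuine intertwining operator follows by a direct, if lengthy, verification of \eqref{Twisted Intertwining}, using the defining property \eqref{contragredient module} and the fact that $(M',Y_{M'})$ is a $g^{-1}$-twisted module (the Proposition before Theorem \ref{CM}); invertibility of $\mathcal{Y}\mapsto\mathcal{Y}^{\,\mathrm{adj}}$ is immediate since applying it twice is the identity up to the standard $e^{zL(1)}(-z^{-2})^{L(0)}$ involution.

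The main obstacle, as always in this circle of arguments, is bookkeeping the fractional exponents: in \eqref{Twisted Intertwining} the factor $\big((z_1-z_2)/z_0\big)^{j_1/T_1}$ is attached to $M_1\to M_3$ data while $\big((z_1-z_0)/z_2\big)^{-j_2/T_2}$ is attached to $M_1\to M_2$ data, and under the substitutions $z\mapsto e^{\pi i}z$ and $z\mapsto z^{-1}$ these exponents pick up phases and switch roles, which is precisely why $M_2$ (resp. $M_1$) must be twisted by $g_1$ (resp. $g_2^{-1}$) in the target types. I would handle this by writing $u\in V^{(j_1,j_2)}$ and checking the transformed Jacobi identity termwise, keeping the branch $(-1)^\alpha=e^{\alpha\pi i}$ fixed as declared in the preliminaries, so that every $\delta$-function argument and every fractional power matches on both sides. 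Once the three bijections are in place, the four fusion rules are all equal, completing the proof.
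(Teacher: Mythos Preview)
Your approach is sound and is the natural ``hands-on'' route: you build explicit linear isomorphisms between the spaces of intertwining operators via the twisted skew-symmetry map $\mathcal{Y}\mapsto e^{zL(-1)}\mathcal{Y}(\,\cdot\,,e^{\pi i}z)$ for the first two equalities, and via the contragredient/adjoint map for the third, tracking the fractional powers so that the correct $\circ\,g_1$ and $\circ\,g_2^{-1}$ shifts appear. This is exactly the twisted extension of the \cite{FHL} arguments, and it works.

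The paper, however, proceeds quite differently. The first two equalities are not proved at all but simply cited from \cite{DL}, Proposition~3.6. For the third equality the paper does \emph{not} use an explicit adjoint intertwining operator; instead it works categorically inside $\mathrm{Rep}(V)$ (Lemma~\ref{Rep(V)-FusionAlg}), using rigidity of the fusion category to get
\[
\mathrm{Hom}(M_1\boxtimes M_2,M_3)\cong \mathrm{Hom}(M_1,M_3\boxtimes M_2')\cong \mathrm{Hom}(M_3'\boxtimes M_1,M_2'),
\]
then applies the already-cited first equality to swap the tensor factors, and finally uses the trivial observation $M_1\circ g_1^{-1}\cong M_1$ to simplify $M_1\circ g_1^{-1}g_2^{-1}$ to $M_1\circ g_2^{-1}$.

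What each buys: your argument is self-contained and, once the bookkeeping of branches is done carefully, holds without the standing rationality/$C_2$-cofiniteness hypotheses that make $\mathrm{Rep}(V)$ a rigid fusion category. The paper's argument is much shorter on the page---essentially four lines---but it leans on the existence of the fusion product, rigidity in $\mathrm{Rep}(V)$ (via \cite{EGNO,CKM,KO}), and on \cite{DL} for the skew-symmetry input; so it is less elementary and less general, though perfectly adequate under the paper's running assumptions.
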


\begin{proof}
The first two identities are given in Proposition 3.6 of \cite{DL}. Now we prove the last equality. Note that $M_3'$ is a $g_1^{-1}g_2^{-1}$-twisted module. We have:
\begin{alignat*}{1}
\text{Hom}(M_1 \boxtimes M_2, M_3) & \cong \text{Hom}(M_1, M_3 \boxtimes M_2') \\
& \cong \text{Hom}(M_3' \boxtimes M_1, M_2') \\
& \cong \text{Hom}\left((M_1 \circ g_1^{-1} g_2^{-1}) \boxtimes M_3', M_2'\right) \\
& \cong \text{Hom}\left((M_1 \circ g_2^{-1}) \boxtimes M_3', M_2'\right),
\end{alignat*}
where the first two isomorphisms follow from the properties of fusion categories \cite{EGNO}, the third isomorphism relies on the first equality in (\ref{sym prop of fusion rules}), and the last isomorphism utilizes the fact that $M_1 \circ g^{-1} = M_1$.
\end{proof}

In particular, if $g_1 = g_2 = 1$, then $M_1$, $M_2$, and $M_3$ are $V$-modules. In this case, we recover the symmetry properties of fusion rules for $V$-modules, as given in \cite{FHL}:

\begin{equation}
N_{M_1, M_2}^{M_3} = N_{M_2, M_1}^{M_3}, \quad N_{M_1, M_2}^{M_3} = N_{M_1, (M_3)'}^{(M_2)'}.\label{Fusion rule property for untwsted modules}
\end{equation}

\section{Permutation Orbifolds}

In the rest of this paper, we fix $g=\left(1,2,\cdots,k\right)$ where
$k$ is a positive integer. In this section, we first review the structure
of twisted modules in permutation orbifold from \cite{BDM} and give
a result on the dual module of a twisted module in permutation orbifold.
Then we review results on $S$-matrix and fusion product of an untwisted
module with a twisted module in permutation orbifolds from \cite{DXY4,DLXY}
, which will be applied in subsequent discussions.

\subsection{Twisted modules in permutation orbifolds}

A fundamental connection between twisted modules for tensor
product vertex operator algebra $V^{\otimes k}$ with respect to permutation
automorphisms and $V$-modules was found in \cite{BDM}. More specifically,
for any $V$-module $\left(W,Y_{W}(\cdot,z)\right)$, a canonical
$g$-twisted $V^{\otimes k}$-module structure $T_{g}(W)$ on $W$
was obtained, where $g$ is viewed as an automorphism of $V^{\otimes k}$.
Moreover, it was demonstrated therein that this leads to an isomorphism
between the categories of weak, admissible and ordinary $V$-modules
and the categories of weak, admissible and ordinary $g$-twisted $V^{\otimes k}$-modules,
respectively. In this subsection, we first recall structure of twisted
modules in permutation orbifold from \cite{BDM}. Following that,
we prove a result concerning the contragredient module of twisted
modules in permutation orbifolds.

Let $k$ be a positive integer. Recall from \cite{BDM} that 
\begin{align}
\Delta_{k}(z)=\exp\left(\sum_{n\ge1}a_{n}z^{-\frac{n}{k}}L(n)\right)k^{-L(0)}z^{(1/k-1)L(0)},\label{delta-kx}
\end{align}
where the coefficients $a_{n}$ for $n\ge1$ are uniquely determined
by 
\begin{align}
\exp\left(\sum_{n\ge1}-a_{n}x^{n+1}\frac{d}{dx}\right)x=\frac{1}{k}(1+x)^{k}-\frac{1}{k}.\label{adefine}
\end{align}

The operator $\Delta_{k}\left(z\right)$ satisfies the following property
\cite{BDM}:

\begin{proposition} In $\left(\text{End}V\right)\left[\left[z^{1/k},z^{-1/k}\right]\right]$,
we have

\[
\Delta_{k}\left(z\right)Y\left(u,z_{0}\right)\Delta_{k}\left(z\right)^{-1}=Y\left(\Delta_{k}\left(z+z_{0}\right)u,\left(z+z_{0}\right)^{1/k}-z^{1/k}\right),
\]
for all $u\in V$.

\end{proposition}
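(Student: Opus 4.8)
The plan is to write $\Delta_k(z) = A(z)\,B(z)$ with
$$A(z) = \exp\!\Big(\sum_{n\ge 1} a_n z^{-n/k} L(n)\Big), \qquad B(z) = k^{-L(0)} z^{(1/k-1)L(0)},$$
and to conjugate $Y(u,z_0)$ first through $B(z)$ and then through $A(z)$. Throughout, negative and fractional powers of $z+z_0$ are expanded as formal power series in $z_0$ whose constant term is the corresponding power of $z$; in particular $(z+z_0)^{1/k}-z^{1/k} = \tfrac1k z^{1/k-1}z_0 + O(z_0^2)$ has an invertible coefficient of $z_0$, so substituting this series into $Y(\cdot,y)$ is legitimate and all identities below are identities of formal series. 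As a preliminary consistency check, when $u=\mathbf 1$ both sides equal $\mathrm{Id}$, since $L(n)\mathbf 1 = 0$ for $n\ge -1$ forces $\Delta_k(w)\mathbf 1 = \mathbf 1$.

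The $B(z)$-step is elementary. From the standard rule $r^{L(0)}Y(u,z_0)r^{-L(0)} = Y(r^{L(0)}u, rz_0)$ for an invertible formal scalar $r$, and since $k^{-L(0)}$ and $z^{(1/k-1)L(0)}$ commute,
$$B(z)\,Y(u,z_0)\,B(z)^{-1} = Y\big(B(z)u,\ y_1\big), \qquad y_1 := k^{-1}z^{1/k-1}z_0,$$
and $y_1$ is exactly the leading term of $(z+z_0)^{1/k}-z^{1/k}$.

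The crux is the $A(z)$-step. I would invoke the change-of-variables conjugation formula for the operators $L(n)$, $n\ge 1$: conjugation of $Y(v,y)$ by $\exp(\sum_{j\ge1}c_j L(j))$ produces $Y$ of a transformed vector evaluated at $\phi^{-1}(y)$, where $\phi = \exp\big(\sum_{j\ge1} -c_j\, y^{j+1}\tfrac{d}{dy}\big)$ is the formal change of variable attached to the $c_j$, and the transformation of $v$ is built from a power of $(\phi^{-1})'$ applied via $L(0)$ together with an exponential of the $L(\ge1)$'s. Here $c_j = a_j z^{-j/k}$; after the substitution $y = z^{1/k}s$ one checks $z^{-n/k}y^{n+1}\tfrac{d}{dy} = s^{n+1}\tfrac{d}{ds}$, so by \eqref{adefine} the associated change of variable is $\phi_z(y) = z^{1/k}\big(\tfrac1k(1+z^{-1/k}y)^k - \tfrac1k\big)$, with inverse $\phi_z^{-1}(y) = (z + k z^{1-1/k} y)^{1/k} - z^{1/k}$. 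Substituting $y = y_1$ gives $k z^{1-1/k} y_1 = z_0$, hence $\phi_z^{-1}(y_1) = (z+z_0)^{1/k} - z^{1/k}$, matching the insertion point on the right-hand side.

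It then remains to verify that the vector transformation produced by the $A(z)$-conjugation, applied to $B(z)u$, is exactly $\Delta_k(z+z_0)u$; equivalently, that the change-of-variables operator of $A(z)$ composed with the scaling $B(z)$ telescopes, up to the shift $z_0\mapsto z+z_0$, into $\Delta_k(z+z_0) = A(z+z_0)B(z+z_0)$. This cocycle-type identity for the $\Delta$-operators is the step I expect to be the main obstacle: it requires carefully tracking how the $L(0)$- and $L(\ge1)$-factors recombine, and it is exactly here that the special form $\tfrac1k(1+x)^k-\tfrac1k$ in \eqref{adefine} is used, since composing it with the scaling by $k^{-1}z^{1/k-1}$ and the translation by $z$ is what makes $(z+z_0)^{1/k}$, and hence $\Delta_k(z+z_0)$, appear. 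This argument mirrors that of \cite{BDM}.
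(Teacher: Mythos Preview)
The paper does not supply its own proof of this proposition; it is quoted from \cite{BDM} and stated without argument. Your outline is essentially the strategy used in \cite{BDM}: factor $\Delta_k(z)=A(z)B(z)$, handle the $B(z)$-conjugation by the elementary scaling rule, and interpret the $A(z)$-conjugation through the formal change of variable $\phi_z$ determined by the $a_n$ via \eqref{adefine}. Your computation of the insertion point is correct: $\phi_z^{-1}(y_1)=(z+z_0)^{1/k}-z^{1/k}$.

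However, the part you yourself flag as the ``main obstacle''---showing that the operator acting on the vector recombines into $\Delta_k(z+z_0)$---is not carried out, and this is exactly the substantive content of the \cite{BDM} argument. What is needed is the general change-of-variables conjugation formula (as in Huang's formalism): conjugating $Y(v,y)$ by $A(z)$ produces $Y$ at $\phi_z^{-1}(y)$ with $v$ replaced by the operator attached to the local expansion of $\phi_z^{-1}$ near $y$, and one must then check that this local-expansion operator, composed with $B(z)$, equals $\Delta_k(z+z_0)$ when $y=y_1$. You assert this ``telescopes'' but give no computation. So your proposal is a correct outline of the \cite{BDM} proof with its central verification omitted; since the paper itself only cites \cite{BDM}, there is nothing further to compare against.
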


The following result is from \cite{BDM}:

\begin{theorem}Let $V$ be any vertex operator algebra and let $g=(1, 2,\cdots, k)\in\text{Aut}(V^{\otimes k})$.
Then for any (weak, admissible) $V$-module $(W,Y_{W})$, we have
a (weak, admissible) $g$-twisted $V^{\otimes k}$-module $\left(T_{g}(W),Y_{T_{g}(W)}\right)$,
where $T_{g}(W)=W$ as a vector space and the vertex operator map
$Y_{T_{g}(W)}(\cdot, z)$ is uniquely determined by

\begin{equation}
Y_{T_{g}(W)}(u^{1},z)=Y_{W}\left(\Delta_{k}(z)u,z^{1/k}\right),\ \ \mbox{ for }u\in V,\label{twisted module delta}
\end{equation}
where $u^{1}=u\otimes{\bf 1}^{\otimes(k-1)}\in V^{\otimes k}$. Furthermore,
every (weak, admissible) $g$-twisted $V^{\otimes k}$-module is isomorphic
to one in this form. \end{theorem}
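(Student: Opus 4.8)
Regard $g=(1,2,\dots,k)$ as an automorphism of $V^{\otimes k}$ of order $k$, so that weak $g$-twisted modules carry $\tfrac1k\Z$-graded vertex operators. The plan is to transport the untwisted structure on $W$ to a $g$-twisted structure on the same vector space by first ``spreading'' the defining formula (\ref{twisted module delta}) over all $k$ tensor slots and then over all of $V^{\otimes k}$, using the operator identity $\Delta_{k}(z)Y(u,z_{0})\Delta_{k}(z)^{-1}=Y\big(\Delta_{k}(z+z_{0})u,(z+z_{0})^{1/k}-z^{1/k}\big)$ as the dictionary between fractional powers of $z$ on the twisted side and ordinary vertex operators on $W$. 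Write $u^{j}=\mathbf{1}^{\otimes(j-1)}\otimes u\otimes\mathbf{1}^{\otimes(k-j)}$, so that $g(u^{j})=u^{j+1}$ with indices read mod $k$; the $g$-eigenvectors in the span of $\{u^{1},\dots,u^{k}\}$ are $\tilde u_{r}=\sum_{j=1}^{k}e^{2\pi ir(j-1)/k}u^{j}$, with $g\tilde u_{r}=e^{-2\pi ir/k}\tilde u_{r}$ and $u^{j}=\tfrac1k\sum_{r}e^{-2\pi ir(j-1)/k}\tilde u_{r}$. In any weak $g$-twisted module the operator attached to $\tilde u_{r}$ involves only powers of $z$ in $\tfrac{r}{k}+\Z$; combined with (\ref{twisted module delta}) this forces
\[
Y_{T_{g}(W)}(u^{j},z)=Y_{W}\!\big(\Delta_{k}(z)u,\;e^{2\pi i(1-j)/k}z^{1/k}\big),
\]
that is, the right side of (\ref{twisted module delta}) with the branch of $z^{1/k}$ uniformly rotated by $e^{2\pi i(1-j)/k}$, and it shows that in a $g$-twisted module the action of every $u^{j}$ is determined by the action of $\{u^{1}:u\in V\}$. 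Since $v^{1}\otimes\cdots\otimes v^{k}=(v^{1})^{1}_{-1}(v^{2})^{2}_{-1}\cdots(v^{k})^{k}_{-1}\mathbf{1}$ and the $u^{j}$ generate $V^{\otimes k}$ as a vertex algebra, the iterate formula then pins down $Y_{T_{g}(W)}$ on all of $V^{\otimes k}$; this yields the asserted uniqueness.

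For existence I would verify that the family $\{\,Y_{W}(\Delta_{k}(z)u,e^{2\pi i(1-j)/k}z^{1/k}):u\in V,\ 1\le j\le k\,\}$ is a local system of $g$-twisted vertex operators on $W$: their mutual twisted weak commutativity reduces, via the $\Delta_{k}$-conjugation identity above, to the ordinary weak commutativity of the $Y_{W}(\cdot,w)$ on $W$ together with bookkeeping of the $k$th-root branches. The standard construction of a weak twisted module from such a local system (in the spirit of \cite{DLM2,BDM}) then produces the weak $g$-twisted $V^{\otimes k}$-module $(T_{g}(W),Y_{T_{g}(W)})$ with $Y_{T_{g}(W)}(u^{1},z)$ equal to (\ref{twisted module delta}); equivalently, one checks the $g$-twisted Jacobi identity on generators and propagates it by iterates. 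When $W=\bigoplus_{n}W(n)$ is admissible, the explicit shape of $\Delta_{k}(z)$ shows that the conformal-weight operator of $V^{\otimes k}$ acts on $T_{g}(W)$ as $\tfrac1k$ times that of $W$ plus a constant depending only on $k$ and the central charge; re-grading $W$ accordingly makes $T_{g}(W)=\bigoplus_{n\in\frac1k\Z_{+}}T_{g}(W)(n)$ satisfy $u_{m}T_{g}(W)(n)\subset T_{g}(W)(\text{wt}\,u-m-1+n)$ with weights taken in $V^{\otimes k}$.

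For the last assertion, let $(M,Y_{M})$ be a weak (resp.\ admissible) $g$-twisted $V^{\otimes k}$-module. Inverting (\ref{twisted module delta}), define on the vector space $M$ a vertex operator map $\widehat Y(u,w)=Y_{M}\big((\Delta_{k}(w^{k})^{-1}u)^{1},w^{k}\big)$ for $u\in V$. The $\Delta_{k}^{-1}$-form of the conjugation identity, obtained from the stated one by the inverse change of variables $w\mapsto(z^{1/k}+w)^{k}-z$, turns the $g$-twisted Jacobi identity for $Y_{M}$ into the ordinary Jacobi identity for $\widehat Y$; the weight bookkeeping above shows $\widehat Y(u,w)$ involves only integral powers of $w$, with $\widehat Y(\mathbf{1},w)=\text{Id}_{M}$, so $\widehat M:=(M,\widehat Y)$ is a weak (resp.\ admissible) $V$-module. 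By construction $Y_{T_{g}(\widehat M)}(u^{1},z)=Y_{M}(u^{1},z)$, and since, as established above, a $g$-twisted $V^{\otimes k}$-module structure is determined by the operators $Y_{M}(u^{1},z)$, we obtain $T_{g}(\widehat M)=M$ (not merely up to isomorphism); conversely $\widehat{T_{g}(W)}=W$. Hence the assignments $W\mapsto T_{g}(W)$ and $M\mapsto\widehat M$ are mutually inverse, so in particular every weak (resp.\ admissible) $g$-twisted $V^{\otimes k}$-module is isomorphic to some $T_{g}(W)$.

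I expect the main obstacle to be the consistency of the passage from single-slot elements to all of $V^{\otimes k}$: one must check that the iterated, normal-ordered products of the operators $Y_{T_{g}(W)}(u^{j},z)$ genuinely satisfy the full $g$-twisted Jacobi identity and not merely its specializations to a generating set, and that all the multivalued $z^{1/k}$-expansions are matched on the correct branches. Every such verification is routed through the $\Delta_{k}$-conjugation identity, which is the real engine of the theorem; once it is in hand, the remaining steps are bounded, if lengthy, formal-variable manipulations.
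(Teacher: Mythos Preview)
The paper does not supply a proof of this theorem; it is quoted verbatim from \cite{BDM} and used as a black box. Your outline is correct and is essentially the argument of \cite{BDM}: define the twisted vertex operators on the slot generators $u^{j}$ by rotating the branch of $z^{1/k}$, use the $\Delta_{k}$-conjugation identity to check that these operators form a mutually local system of $g$-twisted vertex operators on $W$, extend to all of $V^{\otimes k}$ via iterates, and invert the construction to obtain the category equivalence. One small slip: with your convention $g\tilde u_{r}=e^{-2\pi ir/k}\tilde u_{r}$, the eigenspace is $V^{-r\bmod k}$ in the paper's indexing, so $Y_{M}(\tilde u_{r},z)$ carries powers in $-\tfrac{r}{k}+\Z$ rather than $\tfrac{r}{k}+\Z$; this only flips the sign of the phase in your formula for $Y_{T_{g}(W)}(u^{j},z)$ and does not affect the argument.
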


Here, we present a straightforward result in formal calculus that
will be utilized in subsequent discussions.

\begin{lemma} \label{lemma on binomial coefficients}For $m\ge1$,
\[
\sum_{i=1}^{k}\left(_{m-i}^{m-1}\right)\left(_{i}^{k}\right)=\left(_{m}^{k+m-1}\right).
\]

\end{lemma}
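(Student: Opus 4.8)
The plan is to prove the identity
\[
\sum_{i=1}^{k}\binom{m-1}{m-i}\binom{k}{i}=\binom{k+m-1}{m}
\]
by a generating-function (coefficient-extraction) argument, which is the cleanest route and avoids fiddly sign bookkeeping. First I would rewrite the left-hand side using $\binom{m-1}{m-i}=\binom{m-1}{i-1}$, so that the sum becomes $\sum_{i=1}^{k}\binom{m-1}{i-1}\binom{k}{i}$. The key observation is that $\binom{m-1}{i-1}$ is the coefficient of $x^{i-1}$ in $(1+x)^{m-1}$, and $\binom{k}{i}$ is the coefficient of $x^{i}$ in $(1+x)^{k}$. Hence the sum $\sum_{i\ge 1}\binom{m-1}{i-1}\binom{k}{i}$ equals the coefficient of $x^{-1}$ in $(1+x)^{m-1}(1+1/x)^{k}$, or more transparently: it is the coefficient of $x^{k-1}$ in $(1+x)^{m-1}(1+x)^{k}=(1+x)^{k+m-1}$, because replacing the summation index via $j=k-i$ turns $\binom{k}{i}$ into $\binom{k}{k-i}$ (the coefficient of $x^{k-i}$ in $(1+x)^k$), and then the two factors multiply monomials of degrees $i-1$ and $k-i$, contributing to degree $k-1$ overall.

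Concretely, the main computation is the Vandermonde-type step:
\[
\sum_{i=1}^{k}\binom{m-1}{i-1}\binom{k}{i}
=\sum_{i=1}^{k}\binom{m-1}{i-1}\binom{k}{k-i}
=[x^{k-1}]\,(1+x)^{m-1}(1+x)^{k}
=[x^{k-1}]\,(1+x)^{k+m-1}
=\binom{k+m-1}{k-1}=\binom{k+m-1}{m}.
\]
The last equality is just the symmetry of binomial coefficients, since $(k+m-1)-(k-1)=m$. One should note that the terms with $i>m$ vanish automatically because $\binom{m-1}{i-1}=0$ there, so extending or truncating the summation range causes no trouble; similarly the $i=0$ term (were we to include it) contributes $\binom{m-1}{-1}\binom{k}{0}=0$ for $m\ge 1$, so the stated lower limit $i=1$ is harmless.

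Alternatively, one could give a direct Chu--Vandermonde proof: after the reindexing $\binom{m-1}{m-i}=\binom{m-1}{i-1}$, apply the identity $\sum_{i}\binom{m-1}{i-1}\binom{k}{i}=\binom{m-1+k}{k-1}$, which is Vandermonde's convolution $\sum_j \binom{a}{j}\binom{b}{n-j}=\binom{a+b}{n}$ with $a=m-1$, $b=k$, $n=k-1$ and $j=i-1$. Either way the argument is a few lines. I do not expect a genuine obstacle here — the only thing to be careful about is the index shift $\binom{m-1}{m-i}=\binom{m-1}{i-1}$ (valid because $\binom{m-1}{j}$ vanishes outside $0\le j\le m-1$, so the equality of the two binomials holds for all relevant $i$) and the requirement $m\ge 1$, which is exactly the hypothesis of the lemma and guarantees $\binom{m-1}{j}$ is the ordinary binomial coefficient with no negative-top subtleties.
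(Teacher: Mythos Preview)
Your proof is correct and is essentially the same argument as the paper's: both extract a coefficient from the identity $(1+z)^{k+m-1}=(1+z)^{m-1}(1+z)^{k}$. The only cosmetic difference is that the paper reads off the coefficient of $z^{m}$ directly (yielding $\sum_i\binom{m-1}{m-i}\binom{k}{i}$ on one side and $\binom{k+m-1}{m}$ on the other), whereas you first apply the symmetry $\binom{m-1}{m-i}=\binom{m-1}{i-1}$ and then read off the coefficient of $x^{k-1}$; the two are the same computation.
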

\begin{proof}
We consider the coefficient of $z^{m}$ in $\left(1+z\right)^{k+m-1}.$
Since $\left(1+z\right)^{k+m-1}=\sum_{i=0}^{k+m-1}\left(_{i}^{k+m-1}\right)z^{i},$
the coefficient of $z^{m}$ is $\left(_{m}^{k+m-1}\right)$. On the
other hand, from 
\[
\left(1+z\right)^{k+m-1}=\left(1+z\right)^{m-1}\left(1+z\right)^{k}=\sum_{i=0}^{m-1}\left(_{i}^{m-1}\right)z^{i}\cdot\sum_{j=0}^{k}\left(_{j}^{k}\right)z^{j},
\]
we see that the coefficient of $z^{m}$ is $\left(_{m-i}^{m-1}\right)\left(_{i}^{k}\right)$.
Therefore, $\sum_{i=1}^{k}\left(_{m-i}^{m-1}\right)\left(_{i}^{k}\right)=\left(_{m}^{k+m-1}\right).$ 
\end{proof}

\begin{lemma}\label{l3.4} We have
\begin{equation}
e^{z^{\frac{1}{k}}L(1)}\left(-z^{-\frac{2}{k}}\right)^{L\left(0\right)}\Delta_{k}\left(z\right)=\Delta_{k}\left(z^{-1}\right)e^{zL(1)}\left(-z^{-2}\right)^{L\left(0\right)}\label{key identity}
\end{equation}
where $\Delta_{k}\left(z^{-1}\right)=\exp\left(\sum_{n\ge1}a_{n}z^{\frac{n}{k}}L\left(n\right)\right)k^{L\left(0\right)}z^{\left(1-\frac{1}{k}\right)L\left(0\right)}$.
\end{lemma}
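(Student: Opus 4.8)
The plan is to verify the identity \eqref{key identity} by comparing how both sides act, using the characterization \eqref{adefine} of the coefficients $a_n$ together with standard conjugation formulas for $L(0)$ and $L(1)$. Since both sides are elements of $(\mathrm{End}\,V)\{z\}$ built out of the Virasoro operators $L(0),L(1),L(n)$ ($n\ge 1$) and powers of $z^{1/k}$, and since all these can be controlled on each homogeneous component of $V$, it suffices to establish the identity as an operator identity. The natural tool is to recognize that both sides arise from composing the change-of-variables operator underlying $\Delta_k$ with the contragredient-type substitution $z\mapsto z^{-1}$, so the identity is really a statement about composing two formal coordinate changes on $\mathrm{Spec}\,\mathbb{C}[[x]]$ (or on a formal disk) and observing that the two orders of composition agree up to the scalings recorded by the $k^{\pm L(0)}$ and $z^{(\cdots)L(0)}$ factors.

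More concretely, I would first rewrite $\Delta_k(z)$ using \eqref{delta-kx} and move the $L(1)$-exponential past the $L(n)$-exponential and the $L(0)$-scalings on the left-hand side. The key conjugation relations are $(-z^{-2/k})^{L(0)} L(1) (-z^{-2/k})^{-L(0)} = -z^{2/k}L(1)$ (since $L(1)$ lowers weight by $1$), and more generally $a^{L(0)} L(n) a^{-L(0)} = a^{-n} L(n)$; also $e^{z^{1/k}L(1)}$ must be commuted with $k^{-L(0)} z^{(1/k-1)L(0)}$, which rescales the $L(1)$ in the exponent. After these manipulations, both sides of \eqref{key identity} should reduce to an expression of the form $\exp(\text{(combination of }L(n),\,n\ge 1)\cdot(\text{powers of }z^{1/k}))\cdot k^{L(0)} z^{(1-1/k)L(0)} \cdot e^{z L(1)}(-z^{-2})^{L(0)}$, and the whole content of the lemma becomes the claim that the ``combination of $L(n)$'' operators on the two sides agree. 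This last claim is a purely formal identity about the $a_n$'s, which follows by differentiating/composing the defining relation \eqref{adefine}: \eqref{adefine} says $\exp(\sum_{n\ge1}-a_n x^{n+1}\frac{d}{dx})$ is the substitution $x\mapsto \frac1k((1+x)^k-1)$, and its inverse substitution (or the substitution associated to $\Delta_k(z^{-1})$) must be tracked carefully through the $z\mapsto z^{-1}$ flip.

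Alternatively — and this is probably the cleaner route, which I would try first — one can prove \eqref{key identity} by exploiting the already-established Proposition relating $\Delta_k(z)$-conjugation of vertex operators to the substitution $(z+z_0)^{1/k}-z^{1/k}$, together with the well-known identity $e^{zL(1)}(-z^{-2})^{L(0)}$ intertwining $Y$ with the ``adjoint'' vertex operator $Y(e^{zL(1)}(-z^{-2})^{L(0)}\cdot, z^{-1})$ used to define contragredient modules in \eqref{contragredient module}. The strategy is: show that both $e^{z^{1/k}L(1)}(-z^{-2/k})^{L(0)}\Delta_k(z)$ and $\Delta_k(z^{-1})e^{zL(1)}(-z^{-2})^{L(0)}$, when used to conjugate $Y(u,z_0)$, produce the \emph{same} operator-valued series; since the map $u\mapsto$ (its action after such a conjugation) determines these operators up to a scalar that can be pinned down by acting on $\mathbf 1$ (using $Y(\mathbf 1,z_0)=\mathrm{Id}$ and the known action of each factor on $\mathbf 1\in V_0$), the two operators coincide. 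This reduces \eqref{key identity} to matching two coordinate transformations: on one side, apply $\Delta_k$ (the transformation $w\mapsto \frac1k((1+w)^k-1)$-type map near $z$) and then the inversion $z\mapsto z^{-1}$; on the other, invert first and then apply $\Delta_k$ at $z^{-1}$.

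The main obstacle I anticipate is bookkeeping: keeping track of which variable ($z$, $z^{1/k}$, $z^{-1}$, $z^{-1/k}$) each exponential and each $L(0)$-power is built from, and making sure the fractional powers are interpreted consistently (recall $(-1)^\alpha = e^{\alpha\pi i}$ as fixed in the paper, so the branch choices in $(-z^{-2/k})^{L(0)}$ versus $(-z^{-2})^{L(0)}$ must be reconciled — in particular the factor $(-1)^{L(0)}$ appearing on each side has to cancel or combine correctly). A secondary technical point is justifying convergence/well-definedness of the infinite product $\exp(\sum_{n\ge 1}a_n z^{n/k}L(n))$ as an operator on $V$: this is standard because $L(n)$ lowers the conformal weight by $n$, so on any fixed homogeneous vector only finitely many terms contribute, and the manipulations above are all legitimate termwise. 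Once the branch/variable bookkeeping is organized — ideally by reducing everything to the single substitution identity \eqref{adefine} and its behavior under $x\mapsto$ (the inversion-conjugate variable) — the proof should be short.
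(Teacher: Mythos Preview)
Your conceptual picture is right: the identity \eqref{key identity} is a statement about composing two formal coordinate changes, and your remark that ``both sides arise from composing the change-of-variables operator underlying $\Delta_k$ with the contragredient-type substitution'' is exactly the geometry behind the lemma. However, neither of your two concrete routes is what the paper does, and both are harder than necessary.

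The paper bypasses operator-level commutation entirely by passing to the faithful representation $L(n)\mapsto -x^{n+1}\frac{d}{dx}$ on $\mathbb{C}[x,x^{-1}]$. Since the operators on both sides of \eqref{key identity} are built from $L(n)$ with $n\ge 0$, and since in this representation such operators are determined by their action on $x$ (they are substitutions $x\mapsto f(x)$), it suffices to compute both sides applied to $x$. The defining relation \eqref{adefine} then gives $\Delta_k(z)\cdot x = \frac{z}{k}\big((1+z^{-1/k}x)^k-1\big)$ directly, the factors $e^{\lambda L(1)}$ and $a^{L(0)}$ become explicit rational substitutions, and the whole identity collapses to an elementary power-series equality
\[
\sum_{n\ge0}\sum_{i=1}^{k}\binom{n+i-1}{n}\binom{k}{i}(-y)^{n+i}
\;=\;\sum_{n\ge0}(-1)^{n+1}\big((1+y)^k-1\big)^{n+1},
\]
which is then finished by the binomial identity $\sum_{i=1}^{k}\binom{m-1}{m-i}\binom{k}{i}=\binom{k+m-1}{m}$ (the preceding Lemma in the paper). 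No branch-of-$(-1)^{L(0)}$ bookkeeping survives, because in this representation everything is a polynomial manipulation.

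Compared with this, your approach (1) runs into the problem that $[L(1),L(n)]=(1-n)L(n+1)$, so commuting $e^{z^{1/k}L(1)}$ through $\exp\big(\sum_{n\ge1}a_n z^{-n/k}L(n)\big)$ does not close on finitely many terms and you would effectively be redoing Baker--Campbell--Hausdorff for the whole positive Virasoro. Your approach (2), matching the conjugation action on $Y(u,z_0)$, is viable in principle but you would still need to argue that equal conjugation actions force equal operators; the clean way to justify that is precisely the faithful representation on $\mathbb{C}[x,x^{-1}]$, at which point you are back to the paper's argument. In short: keep your geometric intuition, but implement it via $L(n)\mapsto -x^{n+1}\frac{d}{dx}$ and compute both sides on $x$.
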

\begin{proof}
    We consider faithful representation on $\mathbb{C}\left[x,x^{-1}\right]$:
$L(n)\to-x^{n+1}\frac{d}{dx}$. So it is good enough to show 
\begin{flalign}
 & \exp\left(z^{\frac{1}{k}}\left(-x^{2}\frac{d}{dx}\right)\right)\cdot\left(-z^{-\frac{2}{k}}\right)^{-x\frac{d}{dx}}\cdot \exp\left(-\sum_{j\ge1}a_{j}z^{-j/k}x^{j+1}\frac{d}{dx}\right)\cdot k^{x\frac{d}{dx}}z^{-\left(\frac{1}{k}-1\right)x\frac{d}{dx}}x\nonumber \\
= & \Delta_{k}\left(z^{-1}\right)\exp\left(-zx^{2}\frac{d}{dx}\right)\left(-z^{-2}\right)^{-x\frac{d}{dx}}x.\label{identity to be proved}
\end{flalign}
It is easy to check that 
\begin{equation}
\left(x^{2}\frac{d}{dx}\right)^{n}x^{i}=i\cdots\left(i+n-1\right)x^{i+n}\ \text{for\ }i\ge1,n\ge1.\label{operator on x^i}
\end{equation}
In particular, 
\begin{equation}
\left(x^{2}\frac{d}{dx}\right)^{n}x=n!\cdot x^{n+1}.\label{r.s.}
\end{equation}

The left-hand side of (\ref{identity to be proved}) equals 
\begin{alignat}{1}
 & \exp\left(z^{\frac{1}{k}}\left(-x^{2}\frac{d}{dx}\right)\right)\cdot\left(-z^{-\frac{2}{k}}\right)^{-x\frac{d}{dx}}\cdot \exp\left(-\sum_{j\ge1}a_{j}z^{-\frac{j}{k}}x^{j+1}\frac{d}{dx}\right)\cdot k^{x\frac{d}{dx}}z^{-\left(\frac{1}{k}-1\right)x\frac{d}{dx}}x\nonumber \\
= & \exp\left(-z^{\frac{1}{k}}x^{2}\frac{d}{dx}\right)\cdot\left(-z^{-\frac{2}{k}}\right)^{-x\frac{d}{dx}}\cdot z\left(\left(1+z^{-\frac{1}{k}}x\right)^{k}-1\right)\nonumber \\
= & \exp\left(-z^{\frac{1}{k}}x^{2}\frac{d}{dx}\right)\cdot\left(-z^{-\frac{2}{k}}\right)^{-x\frac{d}{dx}}\cdot z\cdot\left(\sum_{i=0}^{k}\left(_{i}^{k}\right)z^{-\frac{i}{k}}x^{i}-1\right)\nonumber \\
= & \exp\left(-z^{\frac{1}{k}}x^{2}\frac{d}{dx}\right)\cdot\sum_{i=1}^{k}\left(_{i}^{k}\right)\left(-z^{\frac{2}{k}}\right)^{-i}z^{-\frac{i}{k}+1}x^{i}\nonumber \\
= & \exp\left(-z^{\frac{1}{k}}x^{2}\frac{d}{dx}\right)\cdot\sum_{i=1}^{k}\left(_{i}^{k}\right)z^{\frac{i}{k}+1}\left(-1\right)^{i}x^{i}\nonumber \\
= & \sum_{n\ge0}\frac{\left(-z^{\frac{1}{k}}\right)^{n}}{n!}\left(x^{2}\frac{d}{dx}\right)^{n}\cdot\sum_{i=1}^{k}\left(_{i}^{k}\right)z^{\frac{i}{k}+1}\left(-1\right)^{i}x^{i}\nonumber \\
= & \sum_{n\ge0}\frac{\left(-z^{\frac{1}{k}}\right)^{n}}{n!}\cdot\sum_{i=1}^{k}\left(_{i}^{k}\right)z^{\frac{i}{k}+1}\left(-1\right)^{i}x^{i+n}i\cdots\left(i+n-1\right)\nonumber \\
= & \sum_{n\ge0}\frac{\left(-z^{\frac{1}{k}}x\right)^{n}}{n!}\cdot\sum_{i=1}^{k}\left(_{i}^{k}\right)\left(z^{\frac{1}{k}}x\right)^{i}z\left(-1\right)^{i}i\cdots\left(i+n-1\right)\label{left-1}
\end{alignat}
where in the first identity we use results from the proof in \cite[Lemma 5.4]{DLXY}
and in the third to the last identity we use (\ref{operator on x^i}).

Recall from \cite{BDM} that $\Delta_{k}\left(z\right)x^{n}=\left(\Delta_{k}\left(z\right)x\right)^{n}$for
$n\in\mathbb{Z}$. The right-hand side of (\ref{identity to be proved}) is equal to
\begin{flalign}
 & \Delta_{k}\left(z^{-1}\right)\exp\left(-zx^{2}\frac{d}{dx}\right)\left(-z^{-2}\right)^{-x\frac{d}{dx}}x\nonumber \\
= & \Delta_{k}\left(z^{-1}\right)\sum_{n\ge0}\frac{\left(-z\right)^{n}}{n!}\left(x^{2}\frac{d}{dx}\right)^{n}x\left(-z^{-2}\right)^{-1}\nonumber \\
= & \Delta_{k}\left(z^{-1}\right)\sum_{n\ge0}\frac{\left(-z\right)^{n}}{n!}n!x^{n+1}\left(-1\right)z^{2}\nonumber \\
= & \Delta_{k}\left(z^{-1}\right)\sum_{n\ge0}\left(-1\right)^{n+1}z^{n+2}x^{n+1}\nonumber \\
= & \sum_{n\ge0}\left(-1\right)^{n+1}z^{n+2}\Delta_{k}\left(z^{-1}\right)x^{n+1}\nonumber \\
= & \sum_{n\ge0}\left(-1\right)^{n+1}z^{n+2}\left(\Delta_{k}\left(z^{-1}\right)x\right)^{n+1}\nonumber \\
= & \sum_{n\ge0}\left(-1\right)^{n+1}z^{n+2}\left(z^{-1}\left(\left(1+z^{1/k}x\right)^{k}-1\right)\right)^{n+1}\nonumber \\
= & \sum_{n\ge0}\left(-1\right)^{n+1}z\left(\left(1+z^{1/k}x\right)^{k}-1\right)^{n+1}\label{right-1}
\end{flalign}
where in the second identity we use (\ref{r.s.}) and in the sixth
identity we use results from the proof in \cite[Lemma 5.4]{DLXY}.

So it is good enough to show
\begin{equation}
\sum_{n\ge0}\frac{\left(-z^{\frac{1}{k}}x\right)^{n}}{n!}\cdot\sum_{i=1}^{k}\left(_{i}^{k}\right)\left(z^{\frac{1}{k}}x\right)^{i}\left(-1\right)^{i}i\cdots\left(i+n-1\right)=\sum_{n\ge0}\left(-1\right)^{n+1}\left(\left(1+z^{\frac{1}{k}}x\right)^{k}-1\right)^{n+1}.\label{Eq ID-2}
\end{equation}
Let $y=z^{\frac{1}{k}}x,$ then the left-hand side of (\ref{Eq ID-2}) is equal
to
\begin{flalign}
 & \sum_{n\ge0}\frac{\left(-y\right)^{n}}{n!}\cdot\sum_{i=1}^{k}\left(_{i}^{k}\right)y^{i}\left(-1\right)^{i}i\cdots\left(i+n-1\right)\nonumber \\
= & \sum_{n\ge0}\sum_{i=1}^{k}\frac{\left(-y\right)^{n+i}i\cdots\left(i+n-1\right)}{n!}\left(_{i}^{k}\right)\nonumber \\
= & \sum_{n\ge0}\sum_{i=1}^{k}\left(_{n}^{n+i-1}\right)\left(_{i}^{k}\right)\left(-y\right)^{n+i}\nonumber \\
= & \sum_{m\ge1}\sum_{i=1}^{k}\left(_{m-i}^{m-1}\right)\left(_{i}^{k}\right)\left(-y\right)^{m},\label{left-2}
\end{flalign}
where in the last identity we let $m=n+i.$ 
The right-hand side of (\ref{Eq ID-2}) is equal to 
\begin{flalign}
 & \sum_{n\ge0}\left(-1\right)^{n+1}\left(\left(1+y\right)^{k}-1\right)^{n+1}\nonumber \\
= & \frac{\left(-1\right)\left(\left(1+y\right)^{k}-1\right)}{1+\left(\left(1+y\right)^{k}-1\right)}\\
= & -1+\frac{1}{\left(1+y\right)^{k}}\nonumber \\
= & -1+\sum_{i\ge0}\left(_{i}^{-k}\right)y^{i}\nonumber \\
= & -1+\sum_{i\ge0}\frac{\left(-k\right)\left(-k-1\right)\cdots\left(-k-i+1\right)}{i!}y^{i}\nonumber \\
= & -1+\sum_{m\ge0}\frac{k\left(k+1\right)\cdots\left(k+m-1\right)}{m!}\left(-1\right)^{m}y^{m}\nonumber \\
= & \sum_{m\ge1}\left(_{m}^{k+m-1}\right)\left(-y\right)^{m}.\label{right-2}
\end{flalign}
By Lemma \ref{lemma on binomial coefficients}, (\ref{left-2}) is
equal to (\ref{right-2}) and hence the proof is completed. 
\end{proof}
\begin{proposition} \label{twisted dual}Let $V$ be any vertex operator
algebra and set $g=(1, 2,\cdots, k)\in\text{Aut}\left(V^{\otimes k}\right)$
with $k$ a positive integer. Let $W$ any $V$-module. Denote $T_{g}\left(W\right)'$
the restrict dual of the $g$-twisted $V^{\otimes k}$-module $T_{g}\left(W\right)$.
Then $T_{g}\left(W\right)'=T_{g^{-1}}\left(W'\right)$ where $W'$
is the contragredient module of $W.$

\end{proposition}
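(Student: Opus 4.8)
The plan is to identify the two $g^{-1}$-twisted $V^{\otimes k}$-modules by checking that they agree on the distinguished vectors $u^1=u\otimes\mathbf{1}^{\otimes(k-1)}$, $u\in V$, and then invoking the uniqueness statement in the realization theorem of \cite{BDM}. Both sides have underlying vector space the restricted dual $W'$ of $W$: for $T_g(W)'$ this is the contragredient construction \cite{FHL} (one uses that the $\tfrac{1}{k}\mathbb{Z}_+$-grading of $T_g(W)$ merely reindexes the $\mathbb{Z}_+$-grading of $W$, so the graded dual is again $W'$), and for $T_{g^{-1}}(W')$ this is the construction of \cite{BDM} applied to the $k$-cycle $g^{-1}=(1,k,k-1,\ldots,2)$, whose defining relation reads $Y_{T_{g^{-1}}(U)}(u^1,z)=Y_U(\Delta_k(z)u,z^{1/k})$ for any $V$-module $U$. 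Since a $g^{-1}$-twisted $V^{\otimes k}$-module structure on a fixed space is uniquely determined by the operators $Y(u^1,z)$ for $u\in V$ (the uniqueness clause of \cite{BDM}), it suffices to show $Y_{T_g(W)'}(u^1,z)=Y_{W'}(\Delta_k(z)u,z^{1/k})$ for all $u\in V$. (Note this uses nothing beyond the axioms: no rationality, $C_2$-cofiniteness, or self-duality of $V$ is needed.)

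Next I would compute $Y_{T_g(W)'}(u^1,z)$. On $u^1$ the operators $L(0),L(1)$ of $V^{\otimes k}$ reduce to the corresponding operators of $V$ acting in the first tensor slot, because $L(0)\mathbf{1}=L(1)\mathbf{1}=0$; hence $e^{zL(1)}(-z^{-2})^{L(0)}u^1=\left(e^{zL(1)}(-z^{-2})^{L(0)}u\right)^1$. Plugging this into the definition (\ref{contragredient module}) of the contragredient module and then using (\ref{twisted module delta}) with the formal variable set to $z^{-1}$ gives
\[
\langle Y_{T_g(W)'}(u^1,z)f,w\rangle=\left\langle f,\ Y_W\left(\Delta_k(z^{-1})\,e^{zL(1)}(-z^{-2})^{L(0)}u,\ z^{-1/k}\right)w\right\rangle .
\]
On the other hand, expanding $Y_{W'}(\Delta_k(z)u,z^{1/k})$ via the definition (\ref{contragredient module}) of the contragredient $V$-module $W'$, applied coefficientwise with formal variable $z^{1/k}$, gives
\[
\langle Y_{W'}(\Delta_k(z)u,z^{1/k})f,w\rangle=\left\langle f,\ Y_W\left(e^{z^{1/k}L(1)}(-z^{-2/k})^{L(0)}\Delta_k(z)u,\ z^{-1/k}\right)w\right\rangle .
\]
Comparing the two, the asserted equality of operators on $u^1$ is precisely the operator identity $e^{z^{1/k}L(1)}(-z^{-2/k})^{L(0)}\Delta_k(z)=\Delta_k(z^{-1})e^{zL(1)}(-z^{-2})^{L(0)}$, i.e. (\ref{key identity}) of Lemma~\ref{l3.4}. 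This establishes $Y_{T_g(W)'}(u^1,z)=Y_{W'}(\Delta_k(z)u,z^{1/k})$, which is the defining relation of $T_{g^{-1}}(W')$, and therefore $T_g(W)'=T_{g^{-1}}(W')$.

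The one genuinely nontrivial input is the operator identity of Lemma~\ref{l3.4}, which the paper has already proved, so the rest is bookkeeping. I expect the only points needing care to be: (i) the reduction to the vectors $u^1$ via the uniqueness part of \cite{BDM} and the identification of the underlying graded-dual vector spaces; and (ii) keeping straight the passage between the variable $z$ carried by the twisted operators and the variable $z^{1/k}$ carried by $Y_W$ and $Y_{W'}$, together with the remark that (\ref{contragredient module}) may legitimately be applied with the composite argument $\Delta_k(z)u$ since that formula is linear and can be applied to each homogeneous component of $\Delta_k(z)u$ separately.
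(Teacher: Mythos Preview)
Your proof is correct and follows the same route as the paper: identify the underlying graded-dual vector spaces, reduce to checking the operators on $u^1$ via the uniqueness clause of \cite{BDM}, and then match the two expressions using the operator identity of Lemma~\ref{l3.4}. The paper organizes the computation as a chain of equalities starting from $\langle Y_{W'}(\Delta_k(z)v,z^{1/k})w',w\rangle$ rather than computing both sides separately, and leaves the invocation of the uniqueness part of \cite{BDM} implicit in the final sentence, but the substance is identical.
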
 
\begin{proof}
For any   $v\in V$, $v^1=v\otimes 1^{\otimes(k-1)}\in V^{\otimes k}$, $w\in W$ and $w'\in W'$, by the definition of $W'$ from (\ref{contragredient module})
we have 
$$
\left\langle Y_{W'}\left(v,z^{\frac{1}{k}}\right)w',w\right\rangle =\left\langle w',Y_W\left(e^{z^{\frac{1}{k}}L(1)}\left(-z^{-\frac{2}{k}}\right)^{L\left(0\right)}v,z^{-\frac{1}{k}}\right)w\right\rangle$$
and 
$$
\left\langle Y_{W'}\left(\Delta_k(z)v,z^{\frac{1}{k}}\right)w',w\right\rangle =\left\langle w',Y_W\left(e^{z^{\frac{1}{k}}L(1)}\left(-z^{-\frac{2}{k}}\right)^{L\left(0\right)}\Delta_k(z)v,z^{-\frac{1}{k}}\right)w\right\rangle.$$
Using Lemma \ref{l3.4} we see that
$$\left\langle Y_{W'}\left(\Delta_k(z)v,z^{\frac{1}{k}}\right)w',w\right\rangle =\left\langle w',Y_W\left(\Delta_k(z^{-1})e^{zL(1)}(z^{-2})^{L(0)}v,z^{-\frac{1}{k}}\right)w\right\rangle,$$
or equivalently,
$$\left\langle Y_{T_{g^{-1}(W')}}\left(v^1,z\right)w',w\right\rangle =\left\langle w',Y_{T_{g}\left(W\right)}\left(e^{zL(1)}(z^{-2})^{L(0)}v^1,z^{-1}\right)w\right\rangle.$$
Noting that $T_g(W)'=W'$ as vector spaces and 
$$\left\langle Y_{T_{g}\left(W\right)'}\left(v^1,z\right)w',w\right\rangle =\left\langle w',Y_{T_{g}\left(W\right)}\left(e^{zL(1)}(z^{-2})^{L(0)}v^1,z^{-1}\right)w\right\rangle,$$
we conclude that 
$$Y_{T_{g^{-1}(W')}}\left(v^1,z\right)=Y_{T_{g}\left(W\right)'}\left(v^1,z\right)$$
for all $v.$ That is,  $T_{g}\left(W\right)'=T_{g^{-1}}\left(W'\right),$ as expected. 
\end{proof}
\begin{remark} If we further assume the vertex operator algebra $V$
in the Proposition \ref{twisted dual} satisfies that $T_{g}\left(V\right)'=T_{g^{-1}}\left(V\right).$
Then we have a simpler proof of the above proposition. Indeed, since
$T_{g}\left(V\right)'=T_{g^{-1}}\left(V\right),$ one has $N_{T_{g}\left(V\right)',T_{g^{-1}}\left(V\right)}^{V}\ge1$.
Also it is clear that $N_{W',W}^{V}\ge1$. Thus 
\begin{alignat*}{1}
 & \text{Hom}\left(T_{g}\left(W\right)\boxtimes_{V^{\otimes k}}T_{g^{-1}}\left(W'\right),V^{\otimes k}\right)\\
= & \text{Hom}\left(T_{g}\left(V\right)\boxtimes_{V^{\otimes k}}\left(W'\otimes V^{k-1}\right)\boxtimes_{V^{\otimes k}}\left(W\otimes V^{k-1}\right)\boxtimes_{V^{\otimes k}}T_{g^{-1}}\left(V\right),V^{\otimes k}\right)\\
= & \text{Hom}\left(\left(W'\boxtimes_{V}W\right)\boxtimes_{V^{\otimes k}}V^{k-1},T_{g^{-1}}\left(V\right)\boxtimes T_{g}\left(V\right)\right)\ge1,
\end{alignat*}
which implies $T_{g}\left(W\right)'=T_{g^{-1}}\left(W'\right)$ by
properties of fusion rules.

\end{remark}

\subsection{S-matrix in permutation orbifolds}

In this subsection, we review some key properties of the $S$-matrix in permutation orbifolds from \cite{DXY4} that will be essential for determining the fusion products of twisted modules in permutation orbifolds.

Let $1 \leq r, s < k$, with $d = \text{gcd}(s, k)$ and $f = \text{gcd}(d, r)$. Set $l = \frac{k}{d}$ and $c = \frac{d}{f}$. The equivalence classes of irreducible $g^s$-twisted $V^{\otimes k}$-modules that are $g^r$-stable are given by:
\begin{equation}
\mathfrak{U}(g^s, g^r) = \left\{ \left(T_{\alpha_1}(W^1)\right)^{\otimes c} \otimes \cdots \otimes \left(T_{\alpha_f}(W^f)\right)^{\otimes c} \mid W^1, \dots, W^f \in \text{Irr}(V) \right\},\label{general module g^s,g^r}
\end{equation}
where each $\alpha_i$ is an $l$-cycle for $1 \leq i \leq f$. We will denote modules of the form $\left(T_{\alpha_1}(W^1)\right)^{\otimes c} \otimes \cdots \otimes \left(T_{\alpha_f}(W^f)\right)^{\otimes c}$ by $T_{g^s}^{W^1, \dots, W^f; c}$, as in \cite{DXY4}.

Let $\mathfrak{U}(g^r, g^s)$ represent the equivalence classes of irreducible $g^r$-twisted modules that are $g^s$-stable. Assume that $\text{gcd}(r, k) = d_1$. Then there exist $m, n \in \mathbb{Z}$ such that $rm + kn = d_1$. Note also that $\text{gcd}(d_1, s) = \text{gcd}(d, r) = \text{gcd}(s, k, r) = f$. Set $l_1 = \frac{k}{d_1}$ and $a = \frac{d_1}{f}$. Similarly, any $g^r$-twisted module in $\mathfrak{U}(g^r, g^s)$ can be written in the form
\[
T_{g^r}^{W^1, \dots, W^f; a} = \left(T_{\beta_1}(W^1)\right)^{\otimes a} \otimes \cdots \otimes \left(T_{\beta_f}(W^f)\right)^{\otimes a}, \label{general module g^r,g^s}
\]
where $W^i \in \text{Irr}(V)$, and each $\beta_i$ is an $l_1$-cycle for $1 \leq i \leq f$.

Now, we recall the modules in $\mathfrak{U}(1, g^s)$. Let $i_1, \dots, i_k \in \{0, 1,  \dots, p\}$. Set $M^{i_1, \dots, i_k} = M^{i_1} \otimes \cdots \otimes M^{i_k}$. Then $M^{i_1, \dots, i_k}$ is an irreducible $V^{\otimes k}$-module. As noted in \cite[(3.1)]{DXY4}, $g^s$ can be expressed as a product of $d$ disjoint $l$-cycles: $g^s = \tau_1 \cdots \tau_d$, where $d = \text{gcd}(s, k)$ and $k = dl$. Therefore, $M^{i_1, \dots, i_k} \circ g^s \cong M^{i_1, \dots, i_k}$ if and only if $M^{i_a} = M^{i_a + jd}$ for $a = 1, \dots, d$ and $j = 1, \dots, l - 1$. Thus, any $V^{\otimes k}$-module in $\mathfrak{U}(1, g^s)$ can be written in the form:
\[
\left(M^{i_1}\right)^{\otimes l} \otimes \cdots \otimes \left(M^{i_d}\right)^{\otimes l}, \label{1,g^s}
\]
where $i_1, \dots, i_d \in \{0, 1, \dots, p\}$.

We will sometimes denote a $g^s$-twisted module of the form $T_{\tau_1}(M^{j_1}) \otimes \cdots \otimes T_{\tau_d}(M^{j_d})$ by $T_{g^s}^{j_1, \dots, j_d}$ or $T_{\tau_1, \dots, \tau_d}^{j_1, \dots, j_d}$ with $j_1, \dots, j_d \in \{0, 1, \dots, p\}$. The following lemma is given in \cite[Lemma 4.5]{DXY4}.

\begin{lemma} \label{S-matrix, nontwisted and twisted} Suppose $s\in\mathbb{N},$
$d=\text{gcd}\left(s,k\right)$ and $l=\frac{k}{d}.$ Let $\left(M^{i_{1}}\right)^{\otimes l}\otimes\cdots\otimes\left(M^{i_{d}}\right)^{\otimes l}\in\mathcal{\mathfrak{U}}\left(1,g^{s}\right)$
and $T_{g^{s}}^{j_{1},\cdots,j_{d}}$ be a $g^{s}$-twisted $V^{\otimes k}$-module
as above. Then 
\[
S_{\left(M^{i_{1}}\right)^{\otimes l}\otimes\cdots\otimes\left(M^{i_{d}}\right)^{\otimes l},T_{g^{s}}^{j_{1},\cdots,j_{d}}}=S_{M^{i_{1}},M^{j_{1}}}\cdots S_{M^{i_{d}},M^{j_{d}}}.
\]

\end{lemma}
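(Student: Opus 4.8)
The plan is to reduce the statement to the $S$-transformation law for trace functions in the permutation orbifold, using the fact that $g^s$ factors into $d$ disjoint $l$-cycles $\tau_1,\dots,\tau_d$ acting on disjoint blocks of tensor factors. First I would record the elementary observation that both the untwisted module $(M^{i_1})^{\otimes l}\otimes\cdots\otimes(M^{i_d})^{\otimes l}$ and the twisted module $T_{g^s}^{j_1,\dots,j_d}=T_{\tau_1}(M^{j_1})\otimes\cdots\otimes T_{\tau_d}(M^{j_d})$ are, as spaces with the appropriate graded pieces, honest tensor products of $d$ factors, each factor being a module (resp.\ $l$-cycle twisted module) associated with the block on which $\tau_a$ acts. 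Since the degree-zero operator $o(v)$ and $L(0)$ behave multiplicatively across tensor factors (and here we only need $v=\mathbf 1$, so only $L(0)$ and the central charge enter), the character of the tensor product module factors as the product of the characters of the $d$ blocks: $Z_{(M^{i_1})^{\otimes l}\otimes\cdots}(v,\tau)$ splits as a product over $a=1,\dots,d$, and likewise for the pairing function $Z_{T_{g^s}^{j_1,\dots,j_d}}(v,(1,g^s),\tau)$, which on each block is a function in $\mathfrak U(1,\tau_a)$ for the $l$-fold tensor power of $V$ on that block.

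Next I would invoke the $S$-matrix relation connecting $\mathfrak U(g^s)$ and $\mathfrak U(1,g^s)$ from Section~3 (the special case of the $S$-transformation recalled just before Theorem~\ref{Verlinde formula}), together with the single-cycle case already in hand: for one $l$-cycle $\tau_a$ on $V^{\otimes l}$, the module $T_{\tau_a}(M^{j_a})$ is a $\tau_a$-twisted $V^{\otimes l}$-module, and the $S$-matrix entry pairing it with $(M^{i_a})^{\otimes l}$ equals $S_{M^{i_a},M^{j_a}}$ — this is exactly the content of the BDM-type identification of $\tau_a$-twisted modules with $V$-modules, combined with the modular $S$-transformation of characters under $T_{\tau_a}$. (This single-cycle identity is where the construction $\Delta_k(z)$ and the behavior of characters of $T_g(W)$ under $\tau\mapsto -1/\tau$ get used; I would cite it from \cite{DXY4} rather than reprove it.) Because the $S$-transformation of a tensor product vertex operator algebra's trace functions is the tensor product of the $S$-transformations of the factors — the modular matrix $\rho(S)$ for $V^{\otimes k}$ restricted to a $g^s$-stable sector is the Kronecker product of the $\rho(S)$'s for the $d$ blocks — the coefficient expressing $Z_{T_{g^s}^{j_1,\dots,j_d}}(v,-1/\tau)$ in terms of the $Z_{(M^{i_1})^{\otimes l}\otimes\cdots}(v,\tau)$ is the product over $a$ of the block coefficients, each of which is $S_{M^{i_a},M^{j_a}}$ by the single-cycle case. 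Reading off this coefficient gives the claimed formula.

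The main obstacle, and the step that needs care, is justifying that the $S$-transformation genuinely factors over the $d$ blocks: one must check that for a tensor product $V_1\otimes V_2$ of rational $C_2$-cofinite VOAs and commuting automorphisms that respect the decomposition, the space $W$ of trace functions is the tensor product of the corresponding spaces for $V_1$ and $V_2$, the $SL_2(\mathbb Z)$-action is the diagonal one, and hence $\rho(S)$ is the Kronecker product — in particular that every irreducible $g^s$-twisted $V^{\otimes k}$-module stable under the relevant group is a tensor product of irreducibles over the blocks, so no "entanglement" between blocks can occur. This follows from rationality and the classification of irreducible (twisted) modules over tensor products, but it is the substantive point; everything else is bookkeeping with the $\delta$-function Jacobi identity and the multiplicativity of $o(v)$, $L(0)$, and $q^{-c/24}$ across tensor factors. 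Once the factorization of $\rho(S)$ is in place, the proof is immediate by comparing coefficients and applying the single-cycle identity $d$ times.
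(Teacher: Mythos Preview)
Your outline is correct, but note that the paper does not actually prove this lemma: it is simply quoted as \cite[Lemma 4.5]{DXY4} and used as input. Your sketch---factor $g^s=\tau_1\cdots\tau_d$ into disjoint $l$-cycles on disjoint blocks, observe that both $(M^{i_1})^{\otimes l}\otimes\cdots\otimes(M^{i_d})^{\otimes l}$ and $T_{g^s}^{j_1,\dots,j_d}$ are external tensor products over these blocks, use multiplicativity of $q^{L(0)-c/24}$ to factor the trace functions, and then apply the single-cycle identity $S_{(M^{i_a})^{\otimes l},\,T_{\tau_a}(M^{j_a})}=S_{M^{i_a},M^{j_a}}$ block by block---is exactly the argument one finds in \cite{DXY4}. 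The point you flag as the main obstacle, namely that the $SL_2(\mathbb{Z})$-representation on the space of trace functions for a tensor product VOA with a block-diagonal automorphism is the tensor product of the block representations, is indeed the only nontrivial step, and it follows (as you say) from rationality and the classification of irreducible twisted modules for tensor products. So there is no gap; you have reconstructed the cited proof rather than something the present paper supplies.
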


The following theorem is given in \cite{DXY4} where (1)(2) are proved
\cite[Theorem 5.4]{DXY4} and (3)(4) are proved in \cite[Theorem 6.8]{DRX2}.
We would like to highlight that corrections have been made for the
results outlined in (1)(iv) and (2)(iv), along with revisions to their
respective proofs.

\begin{theorem} \label{S-matrix in permutation orbifold}Let $1\le r,s<k$,
$d=\text{gcd}\left(s,k\right),$ $d_{1}=\text{gcd}\left(r,k\right)$
and $f=\text{gcd}$$\left(d,r\right)$. Set $l=\frac{k}{d},$ $l_{1}=\frac{k}{d_{1}}$,
$c=\frac{d}{f}$, $a=\frac{d_{1}}{f}.$ Suppose $i_{1},\cdots,i_{f},j_{1},\cdots,j_{f}\in\left\{ 0, 1, \cdots,p\right\} ,$
$j_{1}\le\cdots\le j_{f}$.

% Preview source code from paragraph 152 to 153

\begin{enumerate}[label=(\arabic*)]

\item If $|\{i_{1},\cdots,i_{f}\}|>1$ and $i_{1}\le\cdots\le i_{f}$,
then for $0\le t<l_{1}$, $0\le t_{1}<l$, $0\le n<k$,

\[
S_{\left(T_{g^{r}}^{i_{1},\cdots,i_{f};a}\right)^{t},N}=\frac{1}{l_{1}}\begin{cases}
e^{\frac{2\pi i\left(st+rt_{1}\right)}{k}}\sum_{i=0}^{d-1}S_{T_{g^{r}}^{i_{1},\cdots,i_{f};a},T_{g^{s}}^{j_{1},\cdots,j_{f};c}\circ g^{i}},\\
\ \ \ \ \ \text{if}\ N=\left(T_{g^{s}}^{j_{1},\cdots,j_{f};c}\right)^{t_{1}},\left|\left\{ j_{1},\cdots,j_{f}\right\} \right|>1\\
\ \ \text{\ \ \ and}\ \ d_{1}m\equiv s\ (\text{mod}\ k)\ \text{for}\ \text{some}\ m;\ \ \ \ \text{\textbf{(i)}}\\
e^{\frac{2\pi i\left(st+rn\right)}{k}}S_{T_{g^{r}}^{i_{1},\cdots,i_{f};a},T_{g^{s}}^{j_{1},\cdots,j_{f};b},}\\
\ \ \ \text{\ \ if}\ N=\left(T_{g^{s}}^{j_{1},\cdots,j_{f};c}\right)^{n},\left|\left\{ j_{1},\cdots,j_{f}\right\} \right|=1\\
\ \ \text{\ \ \ and}\ d_{1}m\equiv s\ (\text{mod}\ k)\ \text{for}\ \text{some}\ m;\ \ \ \ \boldsymbol{\text{\textbf{(ii)}}}\\
\sum_{i=0}^{r-1}S_{T_{g^{r}}^{i_{1},\cdots,i_{f};a},\left(\left(M^{j_{1}}\right)^{\otimes l_{1}}\otimes\cdots\otimes\left(M^{j_{d_{1}}}\right)^{\otimes l_{1}}\right)\circ g^{i}},\\
\ \ \ \text{\ \ if}\ N=\left(M^{j_{1}}\right)^{\otimes l_{1}}\otimes\cdots\otimes\left(M^{j_{d_{1}}}\right)^{\otimes l_{1}}\text{and\ \ }d_{1}>1;\ \ \ \ \text{\textbf{(iii)}}\\
e^{\frac{2\pi irn}{k}}S_{T_{g^{r}}^{i_{1},\cdots,i_{f};a},M^{j,\cdots,j}},\quad 
\text{ \text{\ \ if}}\ N=\left(M^{j,\cdots,j}\right)^{n};\ \ \ \ \text{\textbf{(iv)}}\\
0,\ \text{\ \ otherwise}\ \ \ \ \text{\textbf{(v)}}
\end{cases}
\]
where $S_{T_{g^{r}}^{i_{1},\cdots,i_{f};a},T_{g^{s}}^{j_{1},\cdots,j_{f};c}}$,
$S_{T_{g^{r}}^{i_{1},\cdots,i_{f};a},\left(M^{j_{1}}\right)^{\otimes l_{1}}\otimes\cdots\otimes\left(M^{j_{d_{1}}}\right)^{\otimes l_{1}}}$
and $S_{T_{g^{r}}^{M^{i_{1}}},M^{j,\cdots,j}}$ are given in \cite[Lemmas 4.4 and 4.5]{DXY4}.

\item If $|\{i_{1},\cdots,i_{f}\}|=1$, then for $0\le t<k,$ $0\le t_{1}<l$,
$0\le n<k$, 
\[
S_{\left(T_{g^{r}}^{i_{1},\cdots,i_{f};a}\right)^{t},N}=\frac{1}{k}\begin{cases}
e^{\frac{2\pi i\left(st+rt_{1}\right)}{k}}\sum_{i=0}^{d-1}S_{T_{g^{r}}^{i_{1},\cdots,i_{f};a},T_{g^{s}}^{j_{1},\cdots,j_{f};c}\circ g^{i}},\\
\ \ \ \text{\ \ if}\ N=\left(T_{g^{s}}^{j_{1},\cdots,j_{f};c}\right)^{t_{1}},\left|\left\{ j_{1},\cdots,j_{f}\right\} \right|>1;\ \ \ \ \boldsymbol{\boldsymbol{\text{\textbf{(i)}}}}\\
e^{\frac{2\pi i\left(st+rn\right)}{k}}S_{T_{g^{r}}^{i_{1},\cdots,i_{f};a},T_{g^{s}}^{j_{1},\cdots,j_{f};c},}\\
\ \ \ \ \ \text{if}\ N=\left(T_{g^{s}}^{j_{1},\cdots,j_{f};c}\right)^{n},\left|\left\{ j_{1},\cdots,j_{f}\right\} \right|=1;\ \ \ \ \text{\textbf{(ii)}}\\
\sum_{i=0}^{r-1}S_{T_{g^{r}}^{i_{1},\cdots,i_{f};a},\left(\left(M^{j_{1}}\right)^{\otimes l_{1}}\otimes\cdots\otimes\left(M^{j_{d_{1}}}\right)^{\otimes l_{1}}\right)\circ g^{i}},\\
\ \ \ \text{\ \ if}\ N=\left(M^{j_{1}}\right)^{\otimes l_{1}}\otimes\cdots\otimes\left(M^{j_{d_{1}}}\right)^{\otimes l_{1}}\ \text{and\ \ }d_{1}>1;\ \ \ \ \text{\textbf{(iii)}}\\
e^{\frac{2\pi irn}{k}}S_{T_{g^{r}}^{i_{1},\cdots,i_{f};a},M^{j,\cdots,j}},\quad
 \text{\ \ if}\ N=\left(M^{j,\cdots,j}\right)^{n};\ \ \ \ \text{\textbf{(iv)}}\\
0,\ \ \text{otherwise}\ \ \ \ \text{\textbf{(v)}} & ,
\end{cases}
\]
where $S_{T_{g^{r}}^{i_{1},\cdots,i_{f};a},T_{g^{s}}^{j_{1},\cdots,j_{f};c}}$,
$S_{T_{g^{r}}^{i_{1},\cdots,i_{f};a},\left(M^{j_{1}}\right)^{\otimes l_{1}}\otimes\cdots\otimes\left(M^{j_{d_{1}}}\right)^{\otimes l_{1}}}$
and $S_{T_{g^{r}}^{i_{1},\cdots,i_{f};a},M^{j,\cdots,j}}$ are given
in \cite[Lemmas 4.4 and 4.5]{DXY4}.

\item Let $A=\{0,1, \cdots, p\}$ and $I$ be a subset of $A^k\setminus\{(i, \cdots, i)\mid i\in A\}$ consisting of the orbit representatives under the action of $\langle g\rangle$. Then for  $\left(i_{1},\cdots,i_{k}\right),$$\left(j_{1},\cdots,j_{k}\right)\in I$,
$j\in\left\{ 0, 1, \cdots,p\right\} $ and $0\le n<k$, one has
\[
S_{M^{i_{1},\cdots,i_{k}},N}=\begin{cases}
\sum_{n=0}^{k-1}\prod_{t=1}^{k}S_{i_{t},j_{t}+n}, & \ \text{if}\ N=M^{j_{1},\cdots,j_{k}};\\
\prod_{t=1}^{k}S_{i_{t},j}, & \text{if}\ N=\left(M^{j,\cdots,j}\right)^{n}.
\end{cases}
\]

\item Let $i,j\in\left\{ 0,1,\cdots,p\right\} $ and $0\le m,n<k$.
We have 
\[
S_{\left(M^{i,\cdots,i}\right)^{m},\left(M^{j,\cdots,j}\right)^{n}}=\frac{1}{k}S_{i,j}^{k}.
\]

\end{enumerate} 

\end{theorem}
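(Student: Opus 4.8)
My plan is to separate the statement into the portion already available in the literature and the two sub-cases that genuinely require a new argument. The sub-cases (i), (ii), (iii), (v) of parts (1) and (2), together with the entirety of parts (3) and (4), are precisely \cite[Theorem 5.4]{DXY4} and \cite[Theorem 6.8]{DRX2}, and all of these I would invoke directly, since they follow the same trace-function method described below and are unchanged. All the work is therefore concentrated on the corrected sub-cases (1)(iv) and (2)(iv), i.e.\ on the entries
\[
S_{\left(T_{g^{r}}^{i_{1},\cdots,i_{f};a}\right)^{t},\,\left(M^{j,\cdots,j}\right)^{n}},
\]
in which the second index is a \emph{diagonal} untwisted sector.

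Before computing I would isolate the structural feature that makes the diagonal column special and that dictates the shape of the answer. Since every tensor factor of $M^{j,\cdots,j}=M^{j}\otimes\cdots\otimes M^{j}$ is the same, this $V^{\otimes k}$-module is fixed by the whole cyclic group $\langle g\rangle$ of order $k$; hence its stabilizer has index one, it contributes exactly $k$ inequivalent orbifold sectors, and these are indexed by $n=0,\cdots,k-1$ through the eigenvalues of the (here genuine, cocycle-free) action $\phi(g)$, the component $\left(M^{j,\cdots,j}\right)^{n}$ being cut out by the projector $\frac{1}{k}\sum_{b=0}^{k-1}e^{-2\pi i n b/k}\phi(g^{b})$. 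In parallel I would record that the stabilizer of $T_{g^{r}}^{i_{1},\cdots,i_{f};a}$ in $\langle g\rangle$ has order $l_{1}$ when $|\{i_{1},\cdots,i_{f}\}|>1$ and order $k$ when $|\{i_{1},\cdots,i_{f}\}|=1$; this is exactly what produces the two prefactors $\frac{1}{l_{1}}$ and $\frac{1}{k}$, and the two ranges $0\le t<l_{1}$ and $0\le t<k$ of the twist index $t$, in parts (1) and (2).

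The core step will be a trace-function computation in the spirit of \cite{DXY4}. Using Theorem \ref{modular invariance thm} I would apply the $S$-transformation to $Z_{(T_{g^{r}}^{\cdots;a})^{t}}\!\left(v,(g^{r},g^{b}),\tau\right)$ and expand the result against the functions attached to the untwisted diagonal sectors; inserting the projector above turns the operator $\phi(g^{b})$ into the character $e^{-2\pi i n b/k}$ and, after resumming over $b$, singles out the index $n$. The two features the theorem predicts then appear for transparent reasons: the factor $e^{2\pi i r n/k}$ is the monodromy between the $g^{r}$-twisted sector and the $g^{n}$-character, while the \emph{absence} of any dependence on $t$ (and on $s$) follows from the full $g$-stability of the diagonal module---the $\phi(g)$-eigendata that distinguishes the components $(\,\cdot\,)^{t}$ is symmetrized away once it is paired with a module fixed by all of $\langle g\rangle$. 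The residual scalar is identified with the base entry $S_{T_{g^{r}}^{i_{1},\cdots,i_{f};a},M^{j,\cdots,j}}$, whose value is supplied by \cite[Lemmas 4.4 and 4.5]{DXY4}.

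I expect the main obstacle to be precisely the phase bookkeeping in this last step, which is where the original statement required correction: one must track simultaneously the cocycle of the projective representation $\phi$ on the twisted sector, the eigenvalue shift caused by the $g^{r}$-twisting, and the character of $\langle g\rangle$ attached to $n$, and verify that their combination collapses to the single clean phase $e^{2\pi i r n/k}$ with no surviving dependence on $t$ or $s$. A consistency check I would run to guard against a repeat of the earlier error is compatibility of the resulting entries with the symmetry and $S^{2}$ relations of Theorem \ref{Verlinde formula}, applied to the regular self-dual orbifold $(V^{\otimes k})^{\langle g\rangle}$, together with Proposition \ref{twisted dual}, which pins down the contragredient sectors and hence the admissible nonzero pattern recorded in sub-case (v).
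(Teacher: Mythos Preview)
Your proposal is correct in spirit but takes a substantially heavier route than the paper. The paper's argument is a one-line fix: it observes that the cases (1)(i)--(iii),(v), (2)(i)--(iii),(v), (3), (4) are quoted verbatim from \cite[Theorem 5.4]{DXY4} and \cite[Theorem 6.8]{DRX2}, and that for the remaining cases (1)(iv) and (2)(iv) the \emph{original} proof in \cite[Theorem 5.4]{DXY4} already yields the stated formulas once one simply drops the superfluous hypotheses $d_{1}=f=1$ that had been imposed there. No new trace-function computation is performed; the correction is purely to the \emph{statement}, not to the argument.

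By contrast, you propose to rederive the diagonal-column entries from scratch via the $S$-transformation of $Z_{(T_{g^{r}}^{\cdots;a})^{t}}(v,(g^{r},g^{b}),\tau)$ and the $\langle g\rangle$-projector on $M^{j,\cdots,j}$. This is a valid and self-contained approach, and your structural remarks (full $g$-stability of the diagonal module forcing independence of $t$; stabilizer orders giving the prefactors $1/l_{1}$ and $1/k$; monodromy producing $e^{2\pi i rn/k}$) correctly anticipate the answer. It buys you independence from the details of the original \cite{DXY4} argument and a built-in consistency check via $S^{2}$, at the cost of redoing work that the paper regards as already done. The paper's approach is quicker but requires the reader to have the \cite{DXY4} proof at hand and to see why the offending hypotheses were never actually used.
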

\begin{proof}
For the proper validation of results (1)(iv) and (2)(iv), we will
exclude the conditions $d_{1}=f=1$ from the associated results and
proofs as presented in {[}DXY, Theorem 5.4{]}. Therefore we obtain
\[
S_{\left(T_{g^{r}}^{i_{1},\cdots,i_{f};a}\right)^{t},\left(M^{j,\cdots,j}\right)^{n}}=\frac{1}{l_{1}}e^{\frac{2\pi irn}{k}}S_{T_{g^{r}}^{i_{1},\cdots,i_{f};a},M^{j,\cdots,j}}
\]
for the case (1)(iv) and 
\[
S_{\left(T_{g^{r}}^{i_{1},\cdots,i_{f};a}\right)^{t},\left(M^{j,\cdots,j}\right)^{n}}=\frac{1}{k}e^{\frac{2\pi irn}{k}}S_{T_{g^{r}}^{i_{1},\cdots,i_{f};a},M^{j,\cdots,j}}
\]
for the case (2)(iv).
\end{proof}

\subsection{Fusion product of an untwisted  and a twisted module}

The following theorems are given in \cite{DLXY}:

\begin{theorem} \label{fusion product of untwisted and twisted}Let
$V$ be any vertex operator algebra and set $g=(1, 2,\cdots, k)\in\text{Aut}(V^{\otimes k})$
with $k$ a positive integer. Let $M$ and $N$ be any $V$-modules
such that a tensor product $M\boxtimes N$ exists. Then 
\begin{align*}
\left(V^{\otimes(j-1)}\otimes M\otimes V^{\otimes(k-j)}\right)\boxtimes T_{g}(N)\simeq T_{g}(M\boxtimes N),
\end{align*}
 for $1\le j\le k$. In particular, we have $T_{g}\left(M\right)=\left(V^{\otimes(j-1)}\otimes M\otimes V^{\otimes(k-j)}\right)\boxtimes T_{g}\left(V\right)$.
\end{theorem}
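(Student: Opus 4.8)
The plan is to verify the universal property of the tensor product directly by producing an explicit intertwining operator and checking universality. By Remark \ref{r2.14} the module $\left(V^{\otimes(j-1)}\otimes M\otimes V^{\otimes(k-j)}\right)\boxtimes T_{g}(N)$ should carry a $g$-twisted structure (since $1\cdot g=g$), and the claim is that $T_{g}(M\boxtimes N)$ serves as this tensor product. First I would reduce to the case $j=1$: conjugating by the permutation that sends position $1$ to position $j$ gives an isomorphism of twisted module categories under which $V^{\otimes(j-1)}\otimes M\otimes V^{\otimes(k-j)}$ goes to $M\otimes V^{\otimes(k-1)}$ and $T_g(N)$ is fixed up to relabeling the cycle, so it suffices to treat $M^{1}=M\otimes V^{\otimes(k-1)}$. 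Then the heart of the matter is to construct, from a fixed $V$-intertwining operator $\mathcal{I}(\cdot,z)$ of type $\binom{M\boxtimes N}{M\ N}$, a $V^{\otimes k}$-intertwining operator $\mathcal{F}(\cdot,z)$ of type $\binom{T_{g}(M\boxtimes N)}{M^{1}\ T_{g}(N)}$, using the $\Delta_{k}$-twist. The natural guess, mirroring \eqref{twisted module delta}, is
\[
\mathcal{F}(u^{1},z)=\mathcal{I}\!\left(\Delta_{k}(z)u,z^{1/k}\right),\qquad u\in M,
\]
extended to all of $M^{1}$ by the Jacobi-type commutator relation; one checks the $L(-1)$-derivative property and the twisted Jacobi identity \eqref{Twisted Intertwining} from the corresponding identity for $\mathcal{I}$ together with the conjugation property of $\Delta_{k}(z)$ recorded in the Proposition preceding Lemma \ref{lemma on binomial coefficients}.

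For universality, given any weak $g$-twisted $V^{\otimes k}$-module $W$ and any intertwining operator $J(\cdot,z)$ of type $\binom{W}{M^{1}\ T_{g}(N)}$, one uses the isomorphism $W\cong T_g(W_0)$ for some $V$-module $W_0$ from the Theorem of \cite{BDM} recalled above, inverts the $\Delta_k$-twist to produce a $V$-intertwining operator of type $\binom{W_0}{M\ N}$, invokes the universal property of $M\boxtimes N$ in the category of $V$-modules to get a unique $V$-module map $M\boxtimes N\to W_0$, and then transports it back through $T_g(-)$ to a $V^{\otimes k}$-map $T_g(M\boxtimes N)\to W$ factoring $J$ through $\mathcal{F}$. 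Uniqueness follows because $\mathcal{F}(\cdot,z)$ applied to $M$ already generates $T_g(M\boxtimes N)$ as a $V^{\otimes k}$-module (the image of $M^{1}$ under $\mathcal{F}$, acted on by $Y_{T_g(M\boxtimes N)}$, spans everything, since $M\boxtimes N$ is generated by $\mathcal{I}(M,z)N$ over $V$).

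The main obstacle I anticipate is the bookkeeping in matching the twisted Jacobi identity \eqref{Twisted Intertwining} for $\mathcal{F}$ with the untwisted one for $\mathcal{I}$: one must carefully handle the fractional powers $z^{1/k}$, the action of $\Delta_k(z)$ intertwining $Y$-operators with a shift in the formal variable, and the fact that for $u^{1}$ with $u\in V^{(j_1)}$ the eigenvalue $e^{2\pi i j_1/k}$ must come out correctly on both sides — this is exactly where the computation in \cite[Lemma 5.4]{DLXY} and the operator $\Delta_k(z+z_0)$ enter. The last sentence, $T_{g}(M)=\left(V^{\otimes(j-1)}\otimes M\otimes V^{\otimes(k-j)}\right)\boxtimes T_{g}(V)$, is then the special case $N=V$, using $M\boxtimes_V V\cong M$ and $T_g(M\boxtimes V)\cong T_g(M)$.
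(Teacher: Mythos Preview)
The present paper does not prove this theorem at all: it is quoted from \cite{DLXY} (see the line ``The following theorems are given in \cite{DLXY}'' introducing Section~3.3), so there is no in-paper proof to compare against. Your outline---constructing the intertwining operator $\mathcal{F}(u^{1},z)=\mathcal{I}(\Delta_{k}(z)u,z^{1/k})$ from a given $V$-intertwiner $\mathcal{I}$, verifying the twisted Jacobi identity via the conjugation property of $\Delta_{k}(z)$, and then transporting the universal property of $M\boxtimes_V N$ through the BDM equivalence $T_g(-)$---is the natural strategy and is indeed the route taken in \cite{DLXY}; your anticipated obstacle (matching fractional powers and the $\Delta_k(z+z_0)$ shift) is precisely where the technical work lies there. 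One small caution on the reduction to $j=1$: rather than invoking a conjugating permutation (which changes $g$ to a conjugate cycle), it is cleaner to observe directly, as in \cite[Lemma~6.4]{DLXY}, that for the fixed cycle $g=(1,2,\dots,k)$ the modules $T_g(N)$ already satisfy $(V^{\otimes(j-1)}\otimes M\otimes V^{\otimes(k-j)})\boxtimes T_g(N)\cong(M\otimes V^{\otimes(k-1)})\boxtimes T_g(N)$ because all positions $1,\dots,k$ lie in the same $g$-orbit.
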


\begin{theorem} \label{fusion product of untwisted and twisted-General}Let
$V$ be a regular and self-dual vertex operator algebra of CFT type,
and let $M_{1},...,M_{k},N$ be $V$-modules. Let $g$ be a $k$-cycle
permutation. Then 
\[
\left(M_{1}\otimes\cdots\otimes M_{k}\right)\boxtimes_{V^{\otimes k}}T_{g}\left(N\right)\simeq T_{g}\left(M_{1}\boxtimes_{V}\cdots\boxtimes_{V}M_{k}\boxtimes_{V}N\right).
\]
\end{theorem}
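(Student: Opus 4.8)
The plan is to reduce the general statement to the already-established case in Theorem \ref{fusion product of untwisted and twisted} by an inductive argument that exploits the associativity of the fusion product on $\mathrm{Rep}(V^{\otimes k})$. First I would observe that $M_1 \otimes \cdots \otimes M_k$ can be written as an iterated fusion product of the ``slot'' modules: namely, for $1 \le j \le k$ set $M_1 \otimes \cdots \otimes M_k \simeq \bigboxtimes_{j=1}^{k} \left( V^{\otimes(j-1)} \otimes M_j \otimes V^{\otimes(k-j)} \right)$, which holds because $\boxtimes_{V^{\otimes k}}$ on untwisted modules is the componentwise tensor product $M^{i_1,\dots,i_k} \boxtimes M^{i'_1,\dots,i'_k} = \bigotimes_t (M^{i_t} \boxtimes_V M^{i'_t})$, and $V$ is the unit for $\boxtimes_V$. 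Since the fusion product is associative (as recorded in the remarks following Lemma \ref{Rep(V)-FusionAlg}, both tensor products on $\mathrm{Rep}(V^{\otimes k})$ agree and hence the one from Definition \ref{d2.13} is associative), I can peel off one slot at a time.

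Concretely, I would argue by induction on the number of slots. The base case is Theorem \ref{fusion product of untwisted and twisted} itself: $\left( V^{\otimes(j-1)} \otimes M_j \otimes V^{\otimes(k-j)} \right) \boxtimes T_g(N) \simeq T_g(M_j \boxtimes_V N)$ for each $j$. For the inductive step, assuming $\left( M_2 \otimes \cdots \otimes M_k \right) \boxtimes_{V^{\otimes k}} T_g(N) \simeq T_g(M_2 \boxtimes_V \cdots \boxtimes_V M_k \boxtimes_V N)$, I would compute
\begin{align*}
\left( M_1 \otimes \cdots \otimes M_k \right) \boxtimes_{V^{\otimes k}} T_g(N)
&\simeq \left( M_1 \otimes V^{\otimes(k-1)} \right) \boxtimes_{V^{\otimes k}} \left( \left( V \otimes M_2 \otimes \cdots \otimes M_k \right) \boxtimes_{V^{\otimes k}} T_g(N) \right) \\
&\simeq \left( M_1 \otimes V^{\otimes(k-1)} \right) \boxtimes_{V^{\otimes k}} T_g\left( M_2 \boxtimes_V \cdots \boxtimes_V M_k \boxtimes_V N \right) \\
&\simeq T_g\left( M_1 \boxtimes_V M_2 \boxtimes_V \cdots \boxtimes_V M_k \boxtimes_V N \right),
\end{align*}
where the first isomorphism is associativity together with the slot decomposition, the second is the inductive hypothesis applied to the $(k-1)$-slot product $V \otimes M_2 \otimes \cdots \otimes M_k$ (reindexing so the nontrivial slots are $2,\dots,k$), and the third is the base case Theorem \ref{fusion product of untwisted and twisted} applied with $M = M_1$, $j = 1$, and with $N$ replaced by $M_2 \boxtimes_V \cdots \boxtimes_V M_k \boxtimes_V N$. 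One should note that all fusion products appearing exist: $V$ regular implies all $\boxtimes_V$ exist, and the relevant tensor products in $\mathrm{Rep}(V^{\otimes k})$ exist because $V^{\otimes k}$ is regular (hence so is the fusion category $\mathrm{Rep}(V^{\otimes k})$), which also justifies invoking associativity.

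The main obstacle I anticipate is bookkeeping rather than conceptual: one must make sure the slot decomposition $M_1 \otimes \cdots \otimes M_k \simeq \bigboxtimes_j (V^{\otimes(j-1)} \otimes M_j \otimes V^{\otimes(k-j)})$ is applied as an honest isomorphism of $V^{\otimes k}$-modules and that associativity of $\boxtimes_{V^{\otimes k}}$ is being used legitimately for modules that are untwisted in all but one tensor factor (it is, since all such modules lie in $\mathrm{Rep}(V^{\otimes k})$ by Lemma \ref{l3.1} and $\mathrm{Rep}(V^{\otimes k})$ is a fusion category). A secondary point to handle carefully is that Theorem \ref{fusion product of untwisted and twisted} as stated requires only that the relevant $\boxtimes_V$ exists; under the hypothesis that $V$ is regular and self-dual of CFT type, every $M_i \boxtimes_V \cdots \boxtimes_V N$ exists and is again an ordinary $V$-module, so the hypotheses of the base case are met at every stage of the induction. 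Once these points are verified, the displayed chain of isomorphisms completes the proof.
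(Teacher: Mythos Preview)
Your argument is correct. Note, however, that the paper does not actually prove this theorem: it is quoted from \cite{DLXY} (see the sentence ``The following theorems are given in \cite{DLXY}'' preceding Theorems \ref{fusion product of untwisted and twisted} and \ref{fusion product of untwisted and twisted-General}), so there is no in-paper proof to compare against. That said, your approach---the slot decomposition $M_1\otimes\cdots\otimes M_k\simeq\bigboxtimes_{j}(V^{\otimes(j-1)}\otimes M_j\otimes V^{\otimes(k-j)})$, associativity of $\boxtimes_{V^{\otimes k}}$ on $\mathrm{Rep}(V^{\otimes k})$, and induction via Theorem \ref{fusion product of untwisted and twisted}---is exactly the natural one, and indeed the paper itself uses this same slot decomposition repeatedly (see for instance equation (\ref{g^(s+r)}) and the proof of Theorem \ref{Fusion product general}).
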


\section{Fusion Products of Twisted Modules in Permutation Orbifolds }

Let  $\sigma$ be any permutation in $S_k$. In this section, we first determine the fusion products of $\sigma^s$ and $\sigma^r$-twisted modules.

For simplicity, we denote a $\sigma^s$-twisted  $V^{\otimes k}$-module by $\mathcal{T}_{\sigma^s}$. 
We aim to find $\mathcal{T}_{\sigma^s} \boxtimes \mathcal{T}_{\sigma^r}$. If $\text{gcd}(s, r) = d' > 1$, then $s = d's'$ and $r = d'r'$ for some $ s', r' \ge 1$, with $\text{gcd}(s', r') = 1$. Assume that $\sigma^{d'} = \sigma_1 \cdots \sigma_s$ is a product of disjoint cycles. Then $\sigma^s = \sigma_1^{s'} \cdots \sigma_s^{s'}$ and $\sigma^r = \sigma_1^{r'} \cdots \sigma_s^{r'}$. Thus,
\[
\mathcal{T}_{\sigma^s} \boxtimes \mathcal{T}_{\sigma^r} = \left( \mathcal{T}_{\sigma_1^{s'}} \otimes \cdots \otimes \mathcal{T}_{\sigma_s^{s'}} \right) \boxtimes_{V^{\otimes k}} \left( \mathcal{T}_{\sigma_1^{r'}} \otimes \cdots \otimes \mathcal{T}_{\sigma_s^{r'}} \right),
\]
\[
= \left( \mathcal{T}_{\sigma_1^{s'}} \boxtimes \mathcal{T}_{\sigma_1^{r'}} \right) \otimes \cdots \otimes \left( \mathcal{T}_{\sigma_s^{s'}} \boxtimes \mathcal{T}_{\sigma_s^{r'}} \right).
\]
Therefore, it suffices to find the fusion products $\mathcal{T}_{\sigma_i^s} \boxtimes \mathcal{T}_{\sigma_i^r}$, $1\le i\le s$, where $\sigma_i$ is a cycle and $\text{gcd}(s, r) = 1$.  Consequently, in the following we will focus on  the cyclic permutation orbifold $\left(V^{\otimes k}\right)^{\left\langle g \right\rangle }$ where $g=(1, 2, \cdots, k)$. We will derive an explicit formula for the fusion product of an irreducible $g^s$-twisted $V^{\otimes k}$-module with an irreducible $g^r$-twisted $V^{\otimes k}$-module, where  $1 \leq s, r < k$ and $\text{gcd}(s, r) = 1$. 

Recall that $\text{Irr}(V) = \{ V = M^0, M^1, \dots, M^p \}$ denotes the set of all irreducible $V$-modules up to isomorphism.
 Assume that $\text{gcd}(s, k) = d$. Then $o(g^s) = \frac{k}{d}$, and $g^s = \tau_1 \cdots \tau_d$, where $\tau_i$ are $\frac{k}{d}$-cycles for $1 \leq i \leq d$. By \cite{BDM}, each irreducible $g^s$-twisted module can be expressed as
\[
T_{\tau_1, \dots, \tau_d}^{i_1, \dots, i_d} = T_{\tau_1}(M^{i_1}) \otimes \cdots \otimes T_{\tau_d}(M^{i_d}),
\]
where $i_1, \dots, i_d \in \{ 0, 1, \dots, p \}$. In particular,
\[
T_{\tau_1, \dots, \tau_d}^{0, \dots, 0} = T_{\tau_1}(V) \otimes \cdots \otimes T_{\tau_d}(V).
\]
For simplicity, we will sometimes denote the module $T_{\tau_1, \dots, \tau_d}^{i_1, \dots, i_d}$ by $T_{g^s}^{i_1, \dots, i_d}$. Similarly, any $g^r$-twisted $V^{\otimes k}$-module can be expressed as $T_{g^r}^{j_1, \dots, j_m} = T_{\sigma_1, \dots, \sigma_m}^{j_1, \dots, j_m} = T_{\sigma_1}(M^{j_1}) \otimes \cdots \otimes T_{\sigma_m}(M^{j_m})$, where $m = \text{gcd}(r, k)$, $\sigma_1, \dots, \sigma_m$ are $\frac{k}{m}$-cycles, and $j_1, \dots, j_m \in \{ 0, 1, \dots, p \}$.

By Theorem \ref{fusion product of untwisted and twisted} and the associativity of the fusion product \cite{DLXY}, we first determine the fusion product of the $g^s$-twisted module $T_{g^s}^{0, \dots, 0}$ and the $g^r$-twisted module $T_{g^r}^{0, \dots, 0}$. We then apply this result to obtain the formula for the fusion product of the $g^s$-twisted module $T_{g^s}^{i_1, \dots, i_d}$ and the $g^r$-twisted module $T_{g^r}^{j_1, \dots, j_m}$.

\subsection{\label{subsec: fusion step-1}The fusion product of the $g^{s}$-twisted
module $T_{g^{s}}^{0,\cdots,0}$ and the $g^{r}$-twisted module $T_{g^{r}}^{0,\cdots,0}$}

Let $g^{s+r} = \gamma_1 \cdots \gamma_q$ be a product of $q$ disjoint cycles, where $q = \text{gcd}(s+r, k)$ and each $\gamma_i$ is a $\frac{k}{q}$-cycle for $1 \leq i \leq q$. Then any $g^{s+r}$-twisted module can be written as 
\[
T_{g^{s+r}}^{t_1, \dots, t_q} = T_{\gamma_1, \dots, \gamma_q}^{t_1, \dots, t_q} = T_{\gamma_1}(M^{t_1}) \otimes \cdots \otimes T_{\gamma_q}(M^{t_q}),
\]
with $t_1, \dots, t_q \in \{ 0, 1, \dots, p \}$.

The following lemma lists some S-matrix entries for the orbifold algebra, taken from Lemma \ref{S-matrix, nontwisted and twisted} and Theorem \ref{S-matrix in permutation orbifold}, for later use.

\begin{lemma} \label{S-matrix on perm orb from DXY} Let $g=(1, 2,\cdots, k)$ be a $k$-cycle, and let $1 \leq s, r < k$. Define $g^s = \tau_1  \cdots \tau_d$, $g^r = \sigma_1 \cdots \sigma_m$, and $g^{s+r} = \gamma_1 \cdots \gamma_q$ as products of disjoint cycles, where $d = \text{gcd}(s, k)$, $m = \text{gcd}(r, k)$, and $q = \text{gcd}(s+r, k)$. Denote $M^{j, \dots, j} = (M^j)^{\otimes k}$ for $j \in \{ 0, 1, \dots, p \}$.

Then, for $0 \leq t, n < k$ and $i, j, t_1, \dots, t_n \in \{ 0, 1, \dots, p \}$, we have the following $S$-matrix entries:

\begin{equation}
S_{N^t, \left(M^{j, \dots, j}\right)^n} = 
\begin{cases}
\frac{1}{k} e^{\frac{2\pi i s n}{k}} S_{0, j}^d, & \text{if } N = T_{\tau_1, \cdots, \tau_d}^{0, \cdots, 0}; \\
\frac{1}{k} e^{\frac{2\pi i r n}{k}} S_{0, j}^m, & \text{if } N = T_{\sigma_1, \cdots, \sigma_m}^{0, \cdots, 0}; \\
\frac{q}{k} e^{\frac{2\pi i (s+r) n}{k}} S_{t_1, j} \cdots S_{t_q, j}, & \text{if } N = T_{\gamma_1, \cdots, \gamma_q}^{t_1, \cdots, t_q} \text{ and } q > 1; \\
\frac{1}{k} S_{i, j}^k, & \text{if } N = M^{i, \dots, i}.
\end{cases}
\label{S-matrix entries twisted-1 and untwisted}
\end{equation}

\end{lemma}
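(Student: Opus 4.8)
The plan is to derive each of the four cases of \eqref{S-matrix entries twisted-1 and untwisted} directly from Lemma \ref{S-matrix, nontwisted and twisted} (the untwisted-twisted $S$-matrix entry), Theorem \ref{S-matrix in permutation orbifold}(1)--(4), and the general $S$-matrix structure of permutation orbifolds, specializing the ambient modules to the trivial labels $W^i=V=M^0$. First I would treat the last case: $N=M^{i,\dots,i}=(M^i)^{\otimes k}$ paired with another diagonal module $(M^j)^{\otimes k}$. This is immediate from Theorem \ref{S-matrix in permutation orbifold}(4) with $m=n=0$, namely $S_{(M^{i,\dots,i}),(M^{j,\dots,j})}=\tfrac1k S_{i,j}^k$, after noting that in the statement of the lemma the left label is written $N^t$ but in this case $t=0$ is forced and $M^{j,\dots,j}$ appears with exponent $n$, so one must also invoke Theorem \ref{S-matrix in permutation orbifold}(4) in full generality to absorb the exponent $n$; since $i$ is a diagonal label the phase there collapses and we recover $\tfrac1k S_{i,j}^k$.

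Next I would handle the twisted-module cases. For $N=T_{\tau_1,\cdots,\tau_d}^{0,\cdots,0}$: here all the labels $i_1=\cdots=i_d=0$ coincide, so we are in the regime $|\{i_1,\dots,i_f\}|=1$ with $f=\gcd(d,0)$-type degeneracy — more precisely, since the module is $T_{g^s}^{0,\dots,0}$ one is in case (2)(iv) of Theorem \ref{S-matrix in permutation orbifold} (or, if one prefers, one computes directly). Using the corrected formula from the proof of Theorem \ref{S-matrix in permutation orbifold}, namely $S_{(T_{g^s}^{0,\cdots,0})^t,(M^{j,\cdots,j})^n}=\tfrac1k e^{\frac{2\pi i s n}{k}} S_{T_{g^s}^{0,\cdots,0},M^{j,\cdots,j}}$, it remains to evaluate the "base" entry $S_{T_{g^s}^{0,\cdots,0},M^{j,\cdots,j}}$. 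For this I would use Lemma \ref{S-matrix, nontwisted and twisted}: the module $M^{j,\dots,j}=(M^j)^{\otimes k}$ is $g^s$-stable and equals $(M^j)^{\otimes l}\otimes\cdots\otimes(M^j)^{\otimes l}$ ($d$ factors) in the notation there, so $S_{(M^j)^{\otimes k},T_{g^s}^{0,\dots,0}}=S_{M^j,M^0}\cdots S_{M^j,M^0}=S_{j,0}^d=S_{0,j}^d$ by symmetry of $S$. Combining gives the first case. The case $N=T_{\sigma_1,\cdots,\sigma_m}^{0,\cdots,0}$ is identical with $s,d$ replaced by $r,m$, giving the second case.

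For the third case, $N=T_{\gamma_1,\cdots,\gamma_q}^{t_1,\cdots,t_q}$ with $q>1$ and general labels $t_1,\dots,t_q$: again we pair a twisted module labelled entirely by $0$'s against a twisted module, but now the exponent bookkeeping and the "non-diagonal" labels $t_i$ enter. I would read off from Theorem \ref{S-matrix in permutation orbifold}(1)(ii) or (2)(ii) (the "$T$ paired with $T$" case), being careful which of the $q>1$ versus degenerate-label branches applies, and then reduce the base entry $S_{T_{g^{s+r}}^{t_1,\dots,t_q;\ast},\,T_{g^{s+r}}^{0,\dots,0}}$ — wait, more precisely the relevant base entry is between $T_{\tau_1,\dots}^{0,\dots,0}$ and $T_{\gamma_1,\dots,\gamma_q}^{t_1,\dots,t_q}$ — using Lemma \ref{S-matrix, nontwisted and twisted}-type reductions together with the explicit base entries from \cite[Lemmas 4.4 and 4.5]{DXY4} cited in Theorem \ref{S-matrix in permutation orbifold}. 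Specializing those base entries to the all-zero labels collapses the $S$-matrix of $V$ appearing there to $S_{t_i,0}^{\pm1}=S_{0,t_i}^{\pm1}$ factors, and tracking the $\tfrac1k$, the $\tfrac{q}{k}$ normalization (coming from $[G:G_M]$-type orbit counting, i.e.\ $\gcd(s+r,k)=q$), and the phase $e^{\frac{2\pi i(s+r)n}{k}}$ yields $\tfrac{q}{k}e^{\frac{2\pi i(s+r)n}{k}}S_{t_1,j}\cdots S_{t_q,j}$.

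The main obstacle I anticipate is the third case: correctly identifying which sub-branch of Theorem \ref{S-matrix in permutation orbifold}(1)/(2) governs the pairing of $T_{g^s}^{0,\dots,0}$ (all labels equal, so "$|\{i_1,\dots,i_f\}|=1$") with a $g^{s+r}$-twisted module whose labels $t_1,\dots,t_q$ are arbitrary and possibly not all equal, and then matching the orbit-size normalization constant ($\tfrac1k$ vs.\ $\tfrac{q}{k}$ vs.\ $\tfrac{1}{l_1}$) and the phase exponent against the conventions in that theorem — in particular the exponent on $M^{j,\dots,j}$ is $n$ while the one on $N=T_{\gamma_1,\dots,\gamma_q}^{t_1,\dots,t_q}$ in the third line is implicit, so one must be careful that here the diagonal module $(M^{j,\dots,j})^n$ sits on the \emph{right} and the twisted module on the left, the transpose of how Theorem \ref{S-matrix in permutation orbifold} is phrased; invoking symmetry of the full orbifold $S$-matrix (Theorem \ref{modular invariance thm} together with the Verlinde-type symmetry $S=S^{T}$ on the self-dual orbifold $V^{\otimes k}$, which holds since $(V^{\otimes k})^{\langle g\rangle}$ is regular and self-dual by Theorem \ref{CM}) will be needed to move between the two. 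The other three cases are essentially direct substitutions once the base entries from \cite{DXY4} are specialized to trivial labels.
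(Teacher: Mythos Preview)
Your overall approach is exactly what the paper does: the paper gives no proof of this lemma at all, merely introducing it with the sentence ``The following lemma lists some $S$-matrix entries for the orbifold algebra, taken from Lemma~\ref{S-matrix, nontwisted and twisted} and Theorem~\ref{S-matrix in permutation orbifold}, for later use.'' So the intended argument is precisely a direct specialization of those two results, and your treatment of the first, second, and fourth cases is correct and matches this.

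However, your discussion of the third case contains a genuine misreading. You write that one must ``pair a twisted module labelled entirely by $0$'s against a twisted module'' and invoke Theorem~\ref{S-matrix in permutation orbifold}(1)(ii)/(2)(ii), the ``$T$ paired with $T$'' branch, with a base entry of the form $S_{T_{\tau_1,\dots}^{0,\dots,0},\,T_{\gamma_1,\dots,\gamma_q}^{t_1,\dots,t_q}}$. That is not what the third line of \eqref{S-matrix entries twisted-1 and untwisted} computes. In that line $N=T_{\gamma_1,\dots,\gamma_q}^{t_1,\dots,t_q}$ sits on the \emph{left} and the untwisted diagonal module $(M^{j,\dots,j})^n$ on the right, exactly as in the first two lines; only the twist $g^{s+r}$ and the (now general) labels $t_1,\dots,t_q$ differ. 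So the relevant branch of Theorem~\ref{S-matrix in permutation orbifold} is again (iv), not (ii): one applies the corrected formula in case (1)(iv) with the theorem's ``$r$'' replaced by $s+r$, so that $d_1=\gcd(s+r,k)=q$ and $l_1=k/q$, giving the prefactor $1/l_1=q/k$ and phase $e^{2\pi i(s+r)n/k}$. The base entry is then $S_{T_{g^{s+r}}^{t_1,\dots,t_q},\,M^{j,\dots,j}}$, which by Lemma~\ref{S-matrix, nontwisted and twisted} (and symmetry of $S$) equals $S_{t_1,j}\cdots S_{t_q,j}$, not the $S_{t_i,0}$ factors you wrote. No twisted--twisted $S$-entries, and hence no appeal to \cite[Lemma~4.4]{DXY4}, are needed; all four cases are uniform instances of branch (iv) (or part (4)) together with Lemma~\ref{S-matrix, nontwisted and twisted}.
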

Let $G = \langle g \rangle$ be the cyclic group generated by the $k$-cycle $g=(1, 2,\cdots, k)$. Then the category $\mathcal{C}_{(V^{\otimes k})^G}$ of ordinary $(V^{\otimes k})^G$-modules is a modular tensor category, and $V^{\otimes k}$ is a regular commutative algebra in $\mathcal{C}_{(V^{\otimes k})^G}$. Denote the isomorphism classes of simple objects of $\mathcal{C}_{(V^{\otimes k})^G}$ by $$\mathbf{O}(\mathcal{C}_{(V^{\otimes k})^G}) = \{\lambda \mid \lambda \in I\}.$$

Let $\text{Rep}(V^{\otimes k})$ be the module category of the algebra $V^{\otimes k}$ in the category $\mathcal{C}_{(V^{\otimes k})^G}$ as defined in Definition \ref{Rep(V)-def}. Then $\text{Rep}(V^{\otimes k})$ is a fusion category, and all simple objects in $\text{Rep}(V^{\otimes k})$ are $T_{\tau_1, \dots, \tau_d}^{i_1, \dots, i_d}$, where $i_1, \dots, i_d \in \{ 0, 1, \dots, p \}$ and $\tau_s$ are $\frac{k}{d}$-cycles for $1 \leq s \leq d$. In particular, if $\tau_1 = \cdots = \tau_d = 1$, then $d = k$ and $T_{\tau_1, \dots, \tau_d}^{i_1, \dots, i_d} = M^{i_1} \otimes \cdots \otimes M^{i_k}$ is an untwisted $V^{\otimes k}$-module.

The fusion ring $K(\text{Rep}(V^{\otimes k}))$ is a semisimple associative algebra by Theorem \ref{Lustig fusion algebra}. Denote the center of the fusion ring $K(\text{Rep}(V^{\otimes k}))$ by $Z(K(\text{Rep}(V^{\otimes k})))$. Let $\text{Rep}(V^{\otimes k})^0$ be the local $V^{\otimes k}$-module category. Then $\text{Rep}(V^{\otimes k})^0$ is a braided fusion category. Moreover, since $\mathcal{C}_{(V^{\otimes k})^G}$ is a modular tensor category, so is $\text{Rep}(V^{\otimes k})^0$ \cite{KO}. Denote the isomorphism classes of simple objects of $\text{Rep}(V^{\otimes k})^0$ by $$\mathbf{O}(\text{Rep}(V^{\otimes k})^0) = \{\sigma_i \mid i \in \Delta \}.$$

Now we review results from \cite[Theorem 7.1]{DRX3}. Consider the modular tensor category $\mathcal{C}_{(V^{\otimes k})^G}$ and the regular commutative algebra $V^{\otimes k} \in \mathcal{C}_{(V^{\otimes k})^G}$. For any $M \in \text{Rep}(V^{\otimes k})$, define a linear map $$T_M: K(\text{Rep}(V^{\otimes k})) \to K(\text{Rep}(V^{\otimes k}))$$ such that $T_M(N) = M \boxtimes_{V^{\otimes k}} N$ for $N \in \mathbf{O}(\text{Rep}(V^{\otimes k}))$, where $K(\text{Rep}(V^{\otimes k}))$ is the fusion algebra of the fusion category $\text{Rep}(V^{\otimes k})$. Set $$a_\lambda = (V^{\otimes k}) \boxtimes_{(V^{\otimes k})^G} \lambda$$ for $\lambda \in \mathbf{O}(\mathcal{C}_{(V^{\otimes k})^G}) = \{\lambda \mid \lambda \in I\}$. 
Then $a_\lambda \in Z(K(\text{Rep}(V^{\otimes k})))$, and $K(\text{Rep}(V^{\otimes k}))$ has a basis consisting of common eigenvalues $v^{(i, \mu, m)}$, where $i \in \{0, 1, \dots, p\}$ and $\mu \in I$.

To distinguish $S$-matrices for different vertex operator algebras, we will use $(S_{\lambda, \mu})$ for $\lambda, \mu \in I$ to denote the $S$-matrix of the orbifold vertex operator algebra $(V^{\otimes k})^G$. We will continue to use $(S_{i,j})_{i,j=0}^{p}$ as defined earlier to denote the $S$-matrix for the vertex operator algebra $V$.

Define $T_{a_\lambda} = T_\lambda$ and $T_{\sigma_i} = T_i$. The operators $T_j$ and $T_\lambda$, for $j \in \Delta$ and $\lambda \in I$, can be diagonalized simultaneously. Thus, there exists an orthonormal basis $v^{(i, \mu, m)}$ with $i \in \Delta$ and $\mu \in I$, such that
\[
T_j v^{(i, \mu, m)} = \frac{S_{j,i}}{S_{0,i}} v^{(i, \mu, m)}, \quad T_\lambda v^{(i, \mu, m)} = \frac{S_{\lambda,m}}{S_{0,\mu}} v^{(i, \mu, m)},
\]
for all $j$ and $\lambda$, where $m$ indexes the basis vectors in the eigenspace corresponding to the indicated eigenvalues.

For simplicity, we will use $\sum_i$ to denote $\sum_{i=0}^{p}$ in the following discussion. The next lemma is essential for the upcoming theorem.

\begin{lemma} \label{identity condition }
Let $g = (1, 2, \dots, k)$ be a $k$-cycle, and let $1 \leq s, r < k$. Suppose $g^s = \tau_1 \cdots \tau_d$, $g^r = \sigma_1 \cdots \sigma_m$, and $g^{s+r} = \gamma_1 \cdots \gamma_q$, where each expression represents a product of disjoint cycles. Here, $d = \gcd(s, k)$, $m = \gcd(r, k)$, and $q = \gcd(s + r, k)$. Assume that
\[
T_{\tau_1, \dots, \tau_d}^{0, \dots, 0} \boxtimes_{V^{\otimes k}} T_{\sigma_1, \dots, \sigma_m}^{0, \dots, 0} = \sum_{t_1, \dots, t_q} n_{t_1, \dots, t_q} T_{\gamma_1, \dots, \gamma_q}^{t_1, \dots, t_q},
\]
then the coefficients $n_{t_1, \dots, t_q}$ satisfy the following identity:
\[
\frac{1}{S_{0,j}^{k-d-m}} = \sum_{t_1, \dots, t_q} n_{t_1, \dots, t_q} S_{t_1,j} \cdots S_{t_q,j}.
\]

\end{lemma}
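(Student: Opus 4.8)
The plan is to evaluate both sides of the assumed fusion identity against the simple local $V^{\otimes k}$-module $M^{j,\dots,j}=(M^j)^{\otimes k}$, using the Verlinde-type description of $K(\mathrm{Rep}(V^{\otimes k}))$ that was reviewed above. The module $(M^j)^{\otimes k}$ is a simple object of $\mathcal{C}_{V^{\otimes k}}=\mathrm{Rep}(V^{\otimes k})^0$, so its class lies in $Z(K(\mathrm{Rep}(V^{\otimes k})))$ and, by \cite{KO,DRX3} (the operators $T_\lambda$, $T_j$ and the common eigenbasis $v^{(i,\mu,m)}$ recalled before the lemma), fusing with it is a semisimple central operation. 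In the induced-module picture, $(M^j)^{\otimes k}=(V^{\otimes k})\boxtimes_{(V^{\otimes k})^G}(M^{j,\dots,j})^n$ for every $0\le n<k$, where the $(M^{j,\dots,j})^n$ run over the simple $(V^{\otimes k})^G$-constituents of $(M^j)^{\otimes k}$; each $(M^{j,\dots,j})^n$ is a \emph{local} simple object of the modular tensor category $\mathcal{C}_{(V^{\otimes k})^G}$. Hence the Verlinde formula in $\mathcal{C}_{(V^{\otimes k})^G}$ descends, along the surjective ring homomorphism $K(\mathcal{C}_{(V^{\otimes k})^G})\to K(\mathrm{Rep}(V^{\otimes k}))$ induced by the free-module functor, to ring characters $\psi_{j,n}\colon K(\mathrm{Rep}(V^{\otimes k}))\to\mathbb{C}$, with $\psi_{j,n}([X])$ computed from the orbifold $S$-matrix as $S_{\widehat X,(M^{j,\dots,j})^n}/S_{\mathbf 1,(M^{j,\dots,j})^n}$, where $\mathbf 1=(V^{\otimes k})^G=(M^{0,\dots,0})^0$, $\widehat X$ is any simple $(V^{\otimes k})^G$-constituent of $X$, and the ratio is read off with the multiplicity coming from the size of the $\langle g\rangle$-orbit of $X$.

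Next I would compute $\psi_{j,n}$ on the three families of modules in play, using Lemma \ref{S-matrix on perm orb from DXY} together with Theorem \ref{S-matrix in permutation orbifold}(4), which gives the unit-object entry $S_{\mathbf 1,(M^{j,\dots,j})^n}=\tfrac1k S_{0,j}^k$. From the first two cases of (\ref{S-matrix entries twisted-1 and untwisted}) one obtains $\psi_{j,n}([T_{\tau_1,\dots,\tau_d}^{0,\dots,0}])=e^{2\pi i sn/k}S_{0,j}^{\,d-k}$ and $\psi_{j,n}([T_{\sigma_1,\dots,\sigma_m}^{0,\dots,0}])=e^{2\pi i rn/k}S_{0,j}^{\,m-k}$: here $T_{\tau_1,\dots,\tau_d}^{0,\dots,0}$ and $T_{\sigma_1,\dots,\sigma_m}^{0,\dots,0}$ are $g$-stable, so have trivial $\langle g\rangle$-orbit, and the $\tfrac1k$ in the $S$-entry cancels against the $\tfrac1k$ in the unit entry. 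From the third case, the prefactor $\tfrac qk$ in the $S$-entry is compensated by the size $q$ of the $\langle g\rangle$-orbit of $T_{\gamma_1,\dots,\gamma_q}^{t_1,\dots,t_q}$, so that $\psi_{j,n}([T_{\gamma_1,\dots,\gamma_q}^{t_1,\dots,t_q}])=e^{2\pi i(s+r)n/k}S_{t_1,j}\cdots S_{t_q,j}\,S_{0,j}^{-k}$.

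Finally I would apply $\psi_{j,n}$ to the hypothesis $T_{\tau_1,\dots,\tau_d}^{0,\dots,0}\boxtimes_{V^{\otimes k}}T_{\sigma_1,\dots,\sigma_m}^{0,\dots,0}=\sum n_{t_1,\dots,t_q}T_{\gamma_1,\dots,\gamma_q}^{t_1,\dots,t_q}$. Multiplicativity of $\psi_{j,n}$ and the computed values give $e^{2\pi i(s+r)n/k}S_{0,j}^{\,d+m-2k}=\sum_{t_1,\dots,t_q}n_{t_1,\dots,t_q}\,e^{2\pi i(s+r)n/k}S_{t_1,j}\cdots S_{t_q,j}\,S_{0,j}^{-k}$; the phase factors cancel, and multiplying through by $S_{0,j}^{k}$ (legitimate since $S_{0,j}\neq0$ by Theorem \ref{Verlinde formula}) yields $S_{0,j}^{\,d+m-k}=\sum_{t_1,\dots,t_q}n_{t_1,\dots,t_q}S_{t_1,j}\cdots S_{t_q,j}$, equivalently $1/S_{0,j}^{\,k-d-m}=\sum_{t_1,\dots,t_q}n_{t_1,\dots,t_q}S_{t_1,j}\cdots S_{t_q,j}$, as claimed. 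The main obstacle is the first step: one must justify carefully that fusion with the local module $(M^j)^{\otimes k}$ genuinely descends to a ring character of $K(\mathrm{Rep}(V^{\otimes k}))$ and fix its normalization, i.e.\ that the $\tfrac1k$ and $\tfrac qk$ prefactors of Lemma \ref{S-matrix on perm orb from DXY} are exactly compensated by the relevant $\langle g\rangle$-orbit sizes; once that bookkeeping is settled, the rest is the short algebraic manipulation above.
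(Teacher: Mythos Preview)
Your approach is essentially the same as the paper's: both evaluate the fusion identity against $\mu=(M^{j,\dots,j})^n$ via the Verlinde-type eigenvalue relation from \cite{DRX3}, read off the relevant $S$-ratios from Lemma~\ref{S-matrix on perm orb from DXY}, and cancel the common phase $e^{2\pi i(s+r)n/k}$. The algebraic manipulation you give at the end is exactly the paper's.

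The one point where the paper is more careful is precisely your acknowledged ``main obstacle.'' Rather than asserting that the Verlinde characters \emph{descend} along a surjection $K(\mathcal C_{(V^{\otimes k})^G})\to K(\mathrm{Rep}(V^{\otimes k}))$ (note that characters pull back along ring maps, they do not push forward, and surjectivity of the induction map on Grothendieck rings is not established here), the paper stays upstairs in $\mathcal C_{(V^{\otimes k})^G}$: it uses $a_{\lambda_1}\boxtimes_{V^{\otimes k}}a_{\lambda_2}=\sum_{\lambda_3}N_{\lambda_1,\lambda_2}^{\lambda_3}a_{\lambda_3}$ together with the Verlinde identity $\tfrac{S_{\lambda_1,\mu}}{S_{0,\mu}}\tfrac{S_{\lambda_2,\mu}}{S_{0,\mu}}=\sum_{\lambda_3}N_{\lambda_1,\lambda_2}^{\lambda_3}\tfrac{S_{\lambda_3,\mu}}{S_{0,\mu}}$, observes that $a_{\lambda_1}=T_{g^s}^{0,\dots,0}$ and $a_{\lambda_2}=T_{g^r}^{0,\dots,0}$ are single simple objects (trivial $\langle g\rangle$-orbit), and then handles the $q$-factor by expanding $a_{\lambda_3}=\bigoplus_{i=0}^{q-1}T_{g^{s+r}}^{\sigma^i(t_1),\dots,\sigma^i(t_q)}$ and using the $\langle g\rangle$-invariance $n_{t_1,\dots,t_q}=n_{\sigma(t_1),\dots,\sigma(t_q)}$, so that the extra $q$ in $S_{\lambda_3,\mu}/S_{0,\mu}$ is absorbed when the sum over orbit representatives is rewritten as the full sum over $(t_1,\dots,t_q)$. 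This is exactly the ``compensation by orbit size'' you anticipated, made precise.
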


\begin{proof}

For the \( g^s \)-twisted module \( T_{g^s}^{0, \dots, 0}=T_{\tau_1, \dots, \tau_d}^{0, \dots, 0}\), we have \( G_{T_{g^s}^{0, \dots, 0}} = \langle g \rangle \), so \( |G_{T_{g^s}^{0, \dots, 0}}| = k \). The cardinality of the \( G \)-orbit \( T_{g^s}^{0, \dots, 0} \circ G \) is \( [G : G_{T_{g^s}^{0, \dots, 0}}] = 1 \). Similarly, the cardinality of the \( G \)-orbit of the \( g^r \)-twisted module \( T_{g^r}^{0, \dots, 0} \) is also 1. For the \( g^{s+r} \)-twisted module \( T_{g^{s+r}}^{t_1, \dots, t_q} \), we have \( G_{T_{g^{s+r}}^{t_1, \dots, t_q}} = \langle g^q \rangle \), so \( |G_{T_{g^{s+r}}^{t_1, \dots, t_q}}| = \frac{k}{q} \), and the cardinality of the \( G \)-orbit \( T_{g^{s+r}}^{t_1, \dots, t_q} \circ G \) is \( [G : G_{T_{g^{s+r}}^{t_1, \dots, t_q}}] = q \).

Let \( \lambda_1 = (T_{g^s}^{0, \dots, 0})^t \), \( \lambda_2 = (T_{g^r}^{0, \dots, 0})^t \), \( \lambda_3 = (T_{g^{s+r}}^{t_1, \dots, t_q})^a \), and \( \mu = (M^{j, \dots, j})^n \) with \( 0 \leq t, n < k \) and \( 0 \leq a < q \). By \cite[(6.3)]{DNR}, we have:
$$
a_{\lambda_1} = V^{\otimes k} \boxtimes_{(V^{\otimes k})^G} (T_{g^s}^{0, \dots, 0})^t = \sum_{i=0}^{k-1} (V^{\otimes k})^i \boxtimes_{(V^{\otimes k})^G} (T_{g^s}^{0, \dots, 0})^t = T_{g^s}^{0, \dots, 0}.
$$
Similarly, \( a_{\lambda_2} = T_{g^r}^{0, \dots, 0} \). For \( \lambda_3 = (T_{g^{s+r}}^{t_1, \dots, t_q})^a \), we have:
$$
a_{\lambda_3} = V^{\otimes k} \boxtimes_{(V^{\otimes k})^G} (T_{g^{s+r}}^{t_1, \dots, t_q})^a = \bigoplus_{j=0}^{q-1} T_{g^{s+r}}^{t_1, \dots, t_q} \circ g^j = \bigoplus_{j=0}^{q-1} T_{g^{s+r}}^{\sigma^j(t_1), \dots, \sigma^j(t_q)}, \quad 0 \leq a < q,
$$
where \( \sigma = (t_1, t_q, t_{q-1}, \dots, t_2) \) (see \cite[3.2.1]{DXY4}). Additionally,
$$
a_\mu = V^{\otimes k} \boxtimes_{(V^{\otimes k})^G} (M^{j, \dots, j})^n = \bigoplus_{t=0}^{k-1} (V^{\otimes k})^t \boxtimes_{(V^{\otimes k})^G} (M^{j, \dots, j})^n = M^{j, \dots, j}.
$$

By Lemma \ref{S-matrix on perm orb from DXY}, we obtain:
\[
\frac{S_{\lambda_1, \mu}}{S_{0, \mu}} = \frac{S_{(T_{g^s}^{0, \dots, 0})^t, (M^{j, \dots, j})^n}}{S_{(V^{\otimes k})^{\langle g \rangle}, (M^{j, \dots, j})^n}} = \frac{\frac{1}{k} e^{\frac{2\pi isn}{k}} S_{0,j}^d}{\frac{1}{k} S_{0,j}^k} = e^{\frac{2\pi isn}{k}} S_{0,j}^{d-k},
\]
\[
\frac{S_{\lambda_2, \mu}}{S_{0, \mu}} = \frac{S_{(T_{g^r}^{0, \dots, 0})^t, (M^{j, \dots, j})^n}}{S_{(V^{\otimes k})^{\langle g \rangle}, (M^{j, \dots, j})^n}} = \frac{\frac{1}{k} e^{\frac{2\pi irn}{k}} S_{0,j}^m}{\frac{1}{k} S_{0,j}^k} = e^{\frac{2\pi irn}{k}} S_{0,j}^{m-k},
\]
and
\[
\frac{S_{\lambda_3, \mu}}{S_{0, \mu}} = \frac{S_{(T_{g^{s+r}}^{t_1, \dots, t_q})^q, (M^{j, \dots, j})^n}}{S_{(V^{\otimes k})^{\langle g \rangle}, (M^{j, \dots, j})^n}} = \frac{\frac{q}{k} e^{\frac{2\pi i(s+r)n}{k}} S_{t_1,j} \cdots S_{t_q,j}}{\frac{1}{k} S_{0,j}^k} = q e^{\frac{2\pi i(s+r)n}{k}} \frac{S_{t_1,j} \cdots S_{t_q,j}}{S_{0,j}^k}.
\]

Since 
\[
a_{\lambda_1} \boxtimes_{V^{\otimes k}} a_{\lambda_2} = a_{\lambda_1 \boxtimes_{(V^{\otimes k})^G} \lambda_2} = \sum_{\lambda_3} N_{\lambda_1, \lambda_2}^{\lambda_3} a_{\lambda_3},
\]
by \cite[Theorem 7.1]{DRX3}, we have:
\[
\frac{S_{\lambda_1, \mu}}{S_{0, \mu}} \cdot \frac{S_{\lambda_2, \mu}}{S_{0, \mu}} = \sum_{\lambda_3} N_{\lambda_1, \lambda_2}^{\lambda_3} \frac{S_{\lambda_3, \mu}}{S_{0, \mu}}.
\]
Denote
\[
\mathcal{P} = \{(t_1, \dots, t_q) \mid N_{\lambda_1, \lambda_2}^{\lambda} \neq 0 \text{ where } \lambda = (T_{g^{s+r}}^{t_1, \dots, t_q})^\ell, \, t_1, \dots, t_q \in \{0,1,\dots,p\} \}.
\]
Then,
\[
 e^{\frac{2\pi isn}{k}} S_{0,j}^{d-k} \cdot  e^{\frac{2\pi irn}{k}} S_{0,j}^{m-k} = \sum_{( t_1, \dots, t_q) \in \mathcal{P}} N_{\lambda_1, \lambda_2}^{(T_{g^{s+r}}^{t_1, \dots, t_q})^\ell} q e^{\frac{2\pi i(s+r)n}{k}} \frac{S_{t_1,j} \cdots S_{t_q,j}}{S_{0,j}^k},
\]
and hence
\begin{equation}
S_{0,j}^{d+m-k} = \sum_{(t_1, \dots, t_q) \in \mathcal{P}} N_{\lambda_1, \lambda_2}^{(T_{g^{s+r}}^{t_1, \dots, t_q})^\ell} q S_{t_1,j} \cdots S_{t_q,j}. \label{eq1}
\end{equation}

Furthermore, since \( a_{\lambda_3} = \bigoplus_{j=0}^{q-1} T_{g^{s+r}}^{\sigma^j(t_1), \dots, \sigma^j(t_q)} \) with \( \sigma = (t_1, t_q, t_{q-1}, \dots, t_2) \), we obtain:
\[
\sum_{\lambda_3} N_{\lambda_1, \lambda_2}^{\lambda_3} a_{\lambda_3} = \sum_{(t_1, \dots, t_q) \in \mathcal{P}} \bigoplus_{j=0}^{q-1} n_{\sigma^j(t_1), \dots, \sigma^j(t_q)} T_{g^{s+r}}^{\sigma^j(t_1), \dots, \sigma^j(t_q)}.
\]
By Theorem \ref{S-matrix in permutation orbifold}, \( n_{t_1, \dots, t_q} = n_{\sigma(t_1), \dots, \sigma(t_q)} = \cdots = n_{\sigma^{q-1}(t_1), \dots, \sigma^{q-1}(t_q)} \). Thus, we have:
\[
\sum_{(t_1, \dots, t_q) \in \mathcal{P}} \bigoplus_{j=0}^{q-1} n_{\sigma^j(t_1), \dots, \sigma^j(t_q)} T_{g^{s+r}}^{\sigma^j(t_1), \dots, \sigma^j(t_q)} = \sum_{(t_1, \dots, t_q) \in \mathcal{P}} n_{t_1, \dots, t_q} \bigoplus_{j=0}^{q-1} T_{g^{s+r}}^{\sigma^j(t_1), \dots, \sigma^j(t_q)}.
\]
Using (\ref{eq1}), we obtain:
\[
S_{0,j}^{d+m-k} = \sum_{(t_1, \dots, t_q) \in \mathcal{P}} N_{\lambda_1, \lambda_2}^{(T_{g^{s+r}}^{t_1, \dots, t_q})^{\ell}} q S_{t_1,j} \cdots S_{t_q,j} = \sum_{t_1, \dots, t_q} n_{t_1, \dots, t_q} S_{t_1,j} \cdots S_{t_q,j}.
\]
Thus,
\[
\frac{1}{S_{0,j}^{k-d-m}} = \sum_{t_1, \dots, t_q} n_{t_1, \dots, t_q} S_{t_1,j} \cdots S_{t_q,j}.
\]
\end{proof}

We present some straightforward calculations that will be utilized in our proof.

\begin{lemma}\label{calculation of fusion and S-matrix} For $t,j,a,b,c\in\left\{ 0,1,\cdots,p\right\} ,$we
have

\begin{enumerate}[label=(\arabic*)]

\item $\sum_{a,b}N_{b',c}^{a}S_{j,a}S_{j,b}=\frac{S_{c,j}}{S_{0,j}}$
;

\item $\sum_{a}N_{b,c}^{a}S_{a,j}=\frac{S_{b,j}S_{c,j}}{S_{0,j}}$; 

\item $\sum_{c}N_{b,c,}^{a}S_{c',j}=\frac{S_{b,j}S_{a',j}}{S_{0,j}};$

\item $\sum_{a}N_{b,c}^{a}N_{a,f}^{e}=\sum_{t}\frac{S_{b,t}S_{c,t}S_{f,t}S_{e',t}}{S_{0,t}^{2}}$; 

\item For any $i_{1},\cdots i_{n},l_{1},\cdots,l_{n-1},j\in\left\{ 0,1,\cdots,p\right\} $
and $n\in\mathbb{N}$, $n\ge2,$ 
\[
\sum_{l_1, l_2,\cdots, l_{n-1}} N_{i_{1},i_{2}}^{l_{1}}N_{l_{1},i_{3}}^{l_{2}}\cdots N_{l_{n-2},i_{n}}^{l_{n-1}}S_{l_{n-1},j}=\frac{S_{i_{1},j}S_{i_{2},j}\cdots S_{i_{n},j}}{S_{0,j}^{n-1}}.
\]

\end{enumerate}

\end{lemma}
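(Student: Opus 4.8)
### Proof Proposal

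\noindent\textbf{Overall strategy.} All five identities are consequences of the Verlinde formula (Theorem \ref{Verlinde formula}) together with the diagonalization property it provides: the $S$-matrix diagonalizes each fusion matrix $N(i)$ with eigenvalue $\frac{S_{i,t}}{S_{0,t}}$ on the $t$-th eigenvector, equivalently
\[
\sum_{a} N_{i,j}^{a}\,\frac{S_{a,t}}{S_{0,t}} \;=\; \frac{S_{i,t}}{S_{0,t}}\cdot\frac{S_{j,t}}{S_{0,t}},
\]
which rearranges to $\sum_a N_{i,j}^a S_{a,t} = \frac{S_{i,t}S_{j,t}}{S_{0,t}}$. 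This is exactly (2). The plan is to derive (2) first, then bootstrap the remaining parts from it using the symmetry properties of $S$ (part (1) of Theorem \ref{Verlinde formula}: $S^{-1}_{i,j}=S_{i',j}=S_{i,j'}$, $S$ symmetric, $S_{i',j'}=S_{i,j}$) and the symmetry of fusion rules (\ref{Fusion rule property for untwsted modules}).

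\noindent\textbf{Step-by-step.} For (2): apply Theorem \ref{Verlinde formula}(4), which says $S^{-1}N(b)S = \mathrm{diag}(\frac{S_{b,t}}{S_{0,t}})$, read off the $(c,j)$ or equivalently column-$j$ component, i.e. $N(b)$ applied to the eigenvector $(S_{a,j})_a$ returns $\frac{S_{b,j}}{S_{0,j}}$ times it; writing this out in coordinates gives $\sum_a N_{b,c}^a S_{a,j} = \frac{S_{b,j}}{S_{0,j}}S_{c,j}$. For (3): use the fusion symmetry $N_{b,c}^a = N_{b,a'}^{c'}$ from (\ref{Fusion rule property for untwsted modules}) together with $S_{c',j}=S^{-1}_{c,j}$; substitute $c\mapsto c'$, $a\mapsto a'$ into (2) after reindexing, or more directly apply (2) to $N_{b,\cdot}$ acting appropriately and use $S_{a',j}$-symmetry. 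For (1): note $\sum_{a,b}N_{b',c}^a S_{j,a}S_{j,b}$; first sum over $a$ using (2) (with the pair $(b',c)$) to get $\sum_b \frac{S_{b',j}S_{c,j}}{S_{0,j}}S_{j,b}$, then use $\sum_b S_{b',j}S_{j,b} = \sum_b S^{-1}_{b,j}S_{b,j} = (S^{-1}S)_{j,j}\cdot(\text{after care})$; more precisely $\sum_b S_{b,j'}S_{b,j} = (S^2)_{j',j} = \delta_{j'',j} = \delta_{j,j}=1$ by Theorem \ref{Verlinde formula}(2), giving $\frac{S_{c,j}}{S_{0,j}}$. For (4): expand $\sum_a N_{b,c}^a N_{a,f}^e$; using $N_{a,f}^e = N_{a,e'}^{f'}$ and applying (2)-type identities twice, or cleaner: insert the Verlinde formula $N_{b,c}^a = \sum_t \frac{S_{b,t}S_{c,t}S^{-1}_{t,a}}{S_{0,t}}$ and $N_{a,f}^e = \sum_{t'}\frac{S_{a,t'}S_{f,t'}S^{-1}_{t',e}}{S_{0,t'}}$, sum over $a$ using $\sum_a S^{-1}_{t,a}S_{a,t'} = \delta_{t,t'}$, and collapse to $\sum_t \frac{S_{b,t}S_{c,t}S_{f,t}S^{-1}_{t,e}}{S_{0,t}^2}$; finally $S^{-1}_{t,e} = S_{e',t}$. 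For (5): induct on $n$; the base case $n=2$ is exactly (2). For the inductive step, split off the innermost factor: $\sum_{l_{n-1}} N_{l_{n-2},i_n}^{l_{n-1}} S_{l_{n-1},j} = \frac{S_{l_{n-2},j}S_{i_n,j}}{S_{0,j}}$ by (2), then apply the induction hypothesis to the remaining sum over $l_1,\dots,l_{n-2}$ with the vector $\frac{S_{i_n,j}}{S_{0,j}}(S_{l_{n-2},j})$, producing the telescoping product $\frac{S_{i_1,j}\cdots S_{i_n,j}}{S_{0,j}^{n-1}}$.

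\noindent\textbf{Main obstacle.} None of the steps is deep; the only genuine care needed is in part (1), where one must correctly track whether the auxiliary sum $\sum_b S_{b',j}S_{b,j}$ evaluates via $S^2 = (\delta_{i,j'})$ — that is, confirming the index bookkeeping $\sum_b S_{b',j}S_{b,j} = \sum_b S^{-1}_{b,j}S_{b,j} = (S^{-1}S)_{j,j} = 1$ (using symmetry of $S$ to write $S^{-1}_{b,j}=S_{b',j}=S_{j,b'}$). The potential pitfall throughout is conflating $S_{i,j'}$, $S_{i',j}$, and $(S^{-1})_{i,j}$; since Theorem \ref{Verlinde formula}(1) guarantees all three coincide and that $S$ is symmetric, every such manipulation is legitimate, but it must be invoked explicitly. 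I would therefore state once at the outset that we freely use $S = S^{T}$, $S^{-1}_{i,j} = S_{i',j} = S_{i,j'}$, $S_{i',j'} = S_{i,j}$, and $S^2 = (\delta_{i,j'})$ from Theorem \ref{Verlinde formula}, and then the rest is routine.
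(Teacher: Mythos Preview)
Your proposal is correct and follows essentially the same approach as the paper: both rely entirely on Theorem \ref{Verlinde formula} (Verlinde formula, symmetry and unitarity of $S$) together with the fusion-rule symmetries (\ref{Fusion rule property for untwsted modules}). The only cosmetic difference is ordering---the paper proves (1) first by direct substitution of the Verlinde expression for $N_{b',c}^{a}$ and then says (2) is similar, whereas you extract (2) first from the diagonalization statement and feed it into (1); for (3), (4), (5) your arguments match the paper's almost verbatim.
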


\begin{proof}

\begin{enumerate}[label=(\arabic*)]

\item By Theorem \ref{Verlinde formula} and straightforward
calculation, we have 
\[
\sum_{a,b}N_{b',c}^{a}S_{j,a}S_{j,b}=\sum_{a,b,t}\frac{S_{b',t}S_{c,t}S_{a,t}^{-1}}{S_{0,t}}S_{j,a}S_{j,b}=\sum_{b}\frac{S_{b',j}S_{c,j}}{S_{0,j}}S_{j,b}=\frac{S_{c,j}}{S_{0,j}}.
\]
\item By similar calculation as in (1), one can obtain the result.

\item Using Theorem \ref{Verlinde formula} and (2) we have 
\[
\sum_{c}N_{b,c,}^{a}S_{c',j}=\sum_{c}N_{b,a'}^{c'}S_{c',j}=\frac{S_{b,j}S_{a',j}}{S_{0,j}}.
\]
\item  By Theorem \ref{Verlinde formula} and straightforward calculation, it is easy to prove it. 
\item It is easy to prove the result by applying (2). 
\end{enumerate}
\end{proof}

Let \( g = (1, 2,  \dots, k) \) be a \( k \)-cycle, and let \( 1 \leq s, r < k \) with  \( \text{gcd}(s, r) = 1 \). Define \( A = \langle g^s \rangle \) and \( B = \langle g^r \rangle \).

\begin{lemma} \label{inter of orbits nonempty}
For any \( i,  j \in \{1, 2, \dots, k\} \), we have \( A(i) \cap B(j) \neq \emptyset \), where \( A(i) = \{ a(i) \mid a \in A \} \) is the orbit of \( i \) under the action of \( A \).
\end{lemma}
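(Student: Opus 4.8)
The plan is to translate the statement into elementary modular arithmetic and then apply the Chinese Remainder Theorem for (possibly non-coprime) moduli. First I would identify $\{1,2,\dots,k\}$ with $\mathbb{Z}/k\mathbb{Z}$ in such a way that $g=(1,2,\dots,k)$ acts as the shift $x\mapsto x+1$. Then $g^s$ acts as $x\mapsto x+s$, so the $A$-orbit of $i$ is $A(i)=i+\langle s\rangle$, where $\langle s\rangle$ is the cyclic subgroup of $\mathbb{Z}/k\mathbb{Z}$ generated by $s$. Since $\langle s\rangle=\langle d\rangle=\{0,d,2d,\dots,k-d\}$ with $d=\gcd(s,k)$, this gives $A(i)=\{x\in\mathbb{Z}/k\mathbb{Z}\mid x\equiv i\pmod d\}$. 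In the same way $B(j)=\{x\in\mathbb{Z}/k\mathbb{Z}\mid x\equiv j\pmod{d_1}\}$ with $d_1=\gcd(r,k)$.

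With these descriptions in hand, the assertion $A(i)\cap B(j)\neq\emptyset$ becomes exactly the statement that the simultaneous congruence system $x\equiv i\pmod d$, $x\equiv j\pmod{d_1}$ has an integer solution (any such solution then reduces modulo $k$ to an element of $\{1,\dots,k\}$ lying in both orbits). By the general form of the Chinese Remainder Theorem, this system is solvable if and only if $\gcd(d,d_1)$ divides $i-j$, so it suffices to prove $\gcd(d,d_1)=1$. But $\gcd(d,d_1)$ divides $d$, hence divides $s$, and it divides $d_1$, hence divides $r$; therefore $\gcd(d,d_1)$ divides $\gcd(s,r)=1$, and the claim follows. (Alternatively, writing $1=\alpha d+\beta d_1$ one checks directly that $x=j\alpha d+i\beta d_1$ satisfies both congruences, giving an explicit element of $A(i)\cap B(j)$.)

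I do not expect a genuine obstacle here: the content is entirely number-theoretic once the orbits are recognized as arithmetic progressions with common differences $d$ and $d_1$. The only points that need a sentence of care are the identification of $\langle s\rangle$ in $\mathbb{Z}/k\mathbb{Z}$ with $\langle\gcd(s,k)\rangle$, the solvability criterion for congruences with non-coprime moduli, and the divisibility chain $\gcd(d,d_1)\mid\gcd(s,r)=1$ that uses the hypothesis $\gcd(s,r)=1$.
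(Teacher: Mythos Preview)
Your argument is correct. Both proofs hinge on the hypothesis $\gcd(s,r)=1$, but they use it differently.

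The paper's proof is a direct explicit construction: it reduces (by translation) to $i=1$, writes $1=st+rn$ via B\'ezout, rewrites this as $g^{st}=g^{-rn}g$, and then raises both sides to the $(j-1)$-st power to exhibit $g^{(j-1)st}(1)=g^{-(j-1)rn}(j)$ as an element of $A(1)\cap B(j)$. No mention of $d=\gcd(s,k)$ or $d_1=\gcd(r,k)$ is needed.

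Your route is more structural: you first identify the $A$- and $B$-orbits as residue classes modulo $d$ and $d_1$ in $\mathbb{Z}/k\mathbb{Z}$, reduce the question to solvability of a pair of congruences, and then observe that $\gcd(d,d_1)\mid\gcd(s,r)=1$ forces $\gcd(d,d_1)=1$, so CRT applies. This buys a clearer picture of what the orbits actually are (and incidentally shows that $A(i)\cap B(j)$ is a full residue class modulo $\mathrm{lcm}(d,d_1)$, not just nonempty), at the cost of a couple of extra sentences setting up the identification. The paper's version is quicker; yours explains more.
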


\begin{proof}
Without loss of generality, assume \( i = 1 \). Since \( \text{gcd}(s, r) = 1 \), there exist integers \( t \) and \( n \) such that \( st + rn= 1 \). Thus, we have \( g^{st} g^{rn} = g \), or equivalently, \( g^{st} = g^{-rn}g \).

Now, observe that
\[
g^{(j-1)st}(1) = g^{-(j-1)rn} g^{j-1}(1) = g^{-(j-1)rn}(j),
\]
which lies in \( A(1) \cap B(j) \), completing the proof.
\end{proof}

Now, let \( g^s = \tau_1 \cdots \tau_d \), \( g^r = \sigma_1 \cdots \sigma_m \), and \( g^{s+r} = \gamma_1 \cdots \gamma_q \) be products of disjoint cycles, where \( d = \text{gcd}(s, k) \), \( m = \text{gcd}(r, k) \), and \( q = \text{gcd}(s+r, k) \). Set \( a = k/q \). By \cite[Theorem 6.3, Lemma 6.4]{DLXY}, we have
\[
T_{\gamma_j}(M^{t_j}) = (M^{t_j} \otimes V^{\otimes(a-1)}) \boxtimes T_{\gamma_j}(V)
\]
for \( 1 \leq j \leq q \). Thus,
\begin{align}
T_{g^{s+r}}^{t_1, \dots, t_q} &= T_{\gamma_1}(M^{t_1}) \otimes \cdots \otimes T_{\gamma_q}(M^{t_q}) \nonumber \\
&= \left( (M^{t_1} \otimes V^{\otimes(a-1)}) \boxtimes T_{\gamma_1}(V) \right) \otimes \cdots \otimes \left( (M^{t_q} \otimes V^{\otimes(a-1)}) \boxtimes T_{\gamma_q}(V) \right) \nonumber \\
&= \left( (M^{t_1} \otimes V^{\otimes(a-1)}) \otimes \cdots \otimes (M^{t_q} \otimes V^{\otimes(a-1)}) \right) \boxtimes \left( T_{\gamma_1}(V) \otimes \cdots \otimes T_{\gamma_q}(V) \right) \nonumber \\
&= \left( M^{t_1} \otimes V^{\otimes(k-1)} \right) \boxtimes \cdots \boxtimes \left( V^{\otimes(q-1)} \otimes M^{t_q} \otimes V^{\otimes(k-q)} \right) \boxtimes T_{g^{s+r}}^{0, \dots, 0}. \label{g^(s+r)}
\end{align}

Let \( N \) be a \( V \)-module and \( 1 \leq i \leq k \). For convenience, we denote the \( V^{\otimes k} \)-module \( V^{\otimes(i-1)} \otimes N \otimes V^{\otimes(k-i)} \) by \( N^i \).

\begin{lemma} \label{move to 1st  factor} For any  $V$-module $N$ and any $1\leq i\leq k,$
we have 
$$N^i\boxtimes T_{g^{s}}^{0,\cdots,0}\boxtimes_{V^{\otimes k}}T_{g^{r}}^{0,\cdots,0}=
N^1\boxtimes T_{g^{s}}^{0,\cdots,0}\boxtimes_{V^{\otimes k}}T_{g^{r}}^{0,\cdots,0}.$$
\end{lemma}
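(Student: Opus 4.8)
The plan is to show that the positions of the ``moved'' tensor factor $N$ are interchangeable after fusing with $T_{g^s}^{0,\dots,0}\boxtimes T_{g^r}^{0,\dots,0}$, by reducing the claim to the fact that the orbit structures of $A=\langle g^s\rangle$ and $B=\langle g^r\rangle$ interact transitively, as recorded in Lemma \ref{inter of orbits nonempty}. First I would recall that by Theorem \ref{fusion product of untwisted and twisted}, for a single cycle $\tau$ one has $T_\tau(N)=N^i\boxtimes T_\tau(V)$ for \emph{any} index $i$ in the support of $\tau$; that is, inside one cycle the position of $N$ does not matter. So the content of the lemma is entirely about moving $N$ from a slot governed by one cycle of $g^s$ (equivalently of $g^r$) to a slot governed by another cycle.

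\textbf{Key steps.} I would proceed as follows. (1) Using Theorem \ref{fusion product of untwisted and twisted} and associativity of the fusion product \cite{DLXY}, rewrite $N^i\boxtimes T_{g^s}^{0,\dots,0}$: since $g^s=\tau_1\cdots\tau_d$ and $i$ lies in the support of exactly one cycle, say $\tau_a$, we get $N^i\boxtimes T_{g^s}^{0,\dots,0}=T_{\tau_1}(V)\otimes\cdots\otimes T_{\tau_a}(N)\otimes\cdots\otimes T_{\tau_d}(V)$, and by Theorem \ref{fusion product of untwisted and twisted} this equals $N^{i'}\boxtimes T_{g^s}^{0,\dots,0}$ for every $i'$ in the support of $\tau_a$. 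Hence $N^i\boxtimes T_{g^s}^{0,\dots,0}$ depends only on the $A$-orbit of $i$. (2) Symmetrically, $N^i\boxtimes T_{g^r}^{0,\dots,0}$ depends only on the $B$-orbit of $i$. (3) Combining (1) and (2) via associativity: $N^i\boxtimes T_{g^s}^{0,\dots,0}\boxtimes T_{g^r}^{0,\dots,0}$ is unchanged if $i$ is replaced by any element of $A(i)$ (move inside $T_{g^s}$) or by any element of $B(i)$ (move inside $T_{g^r}$). (4) Therefore $N^i\boxtimes T_{g^s}^{0,\dots,0}\boxtimes T_{g^r}^{0,\dots,0}$ is constant on the smallest subset of $\{1,\dots,k\}$ containing $i$ and closed under both the $A$- and $B$-actions, i.e.\ on the $\langle g^s,g^r\rangle$-orbit of $i$. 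Since $\gcd(s,r)=1$ gives $\langle g^s,g^r\rangle=\langle g\rangle$, which acts transitively on $\{1,\dots,k\}$, every $i$ lies in the same orbit as $1$, and the claimed equality with the $i=1$ case follows.

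\textbf{Main obstacle.} The only real subtlety is step (3)--(4): making precise that a module equality that holds ``moving $N$ within each fixed auxiliary twisted module separately'' can be iterated to conclude invariance under the group generated by both moves. This is where Lemma \ref{inter of orbits nonempty} is essential: given any $j$, it supplies an element of $A(i)\cap B(j)$, which lets one pass from slot $i$ to slot $j$ in a single chain of moves (first rearrange within a $\tau$-cycle to land in $A(i)\cap B(j)$, then rearrange within the corresponding $\sigma$-cycle to reach $j$), rather than needing an a priori unbounded alternation. One must also be careful that at each stage one is applying associativity of $\boxtimes_{V^{\otimes k}}$ and Theorem \ref{fusion product of untwisted and twisted} to the \emph{same} fixed pair of twisted modules $T_{g^s}^{0,\dots,0}$ and $T_{g^r}^{0,\dots,0}$, which is legitimate because these modules are untouched by the rearrangement of the $N$-slot. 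Beyond this bookkeeping, the argument is routine.
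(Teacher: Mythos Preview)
Your proposal is correct and follows essentially the same approach as the paper: use Theorem \ref{fusion product of untwisted and twisted} to see that the slot of $N$ can be moved freely within any $A$-orbit (via $T_{g^s}^{0,\dots,0}$) or $B$-orbit (via $T_{g^r}^{0,\dots,0}$), then invoke Lemma \ref{inter of orbits nonempty} (equivalently $\langle g^s,g^r\rangle=\langle g\rangle$ from $\gcd(s,r)=1$) to connect slot $i$ to slot $1$ in two moves. The paper's proof is exactly this two-step chain, written out directly rather than phrased in terms of orbit closure; the only point you leave implicit (as does the paper) is the commutation $N^i\boxtimes T_{g^s}^{0,\dots,0}\cong T_{g^s}^{0,\dots,0}\boxtimes N^i$, which follows from Proposition \ref{Fusion rule prop twisted} since $N^i$ is untwisted.
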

\begin{proof}
Note that 
$$N^i\boxtimes_{V^{\otimes k}} T_{g^{s}}^{0,\cdots,0}\boxtimes_{V^{\otimes k}}T_{g^{r}}^{0,\cdots,0}=
 T_{g^{s}}^{0,\cdots,0}\boxtimes_{V^{\otimes k}} N^i\boxtimes_{V^{\otimes k}}T_{g^{r}}^{0,\cdots,0}.$$
By Lemma \ref{inter of orbits nonempty}, $A(i)\cap B(1)\ne \emptyset.$ Let $j\in A(i)\cap B(1).$  Then 
$$N^i\boxtimes_{V^{\otimes k}}T_{g^{r}}^{0,\cdots,0}=N^j\boxtimes_{V^{\otimes k}}T_{g^{r}}^{0,\cdots,0}$$
and
$$N^i\boxtimes_{V^{\otimes k}} T_{g^{s}}^{0,\cdots,0}\boxtimes_{V^{\otimes k}}T_{g^{r}}^{0,\cdots,0}=
 T_{g^{s}}^{0,\cdots,0}\boxtimes_{V^{\otimes k}} N^j\boxtimes_{V^{\otimes k}}T_{g^{r}}^{0,\cdots,0}=N^j\boxtimes_{V^{\otimes k}} T_{g^{s}}^{0,\cdots,0}\boxtimes_{V^{\otimes k}}T_{g^{r}}^{0,\cdots,0}.$$
Since $1,j$ are in the same $B$-orbit, we see that 
$N^j\boxtimes_{V^{\otimes k}} T_{g^{s}}^{0,\cdots,0}=N^1\boxtimes_{V^{\otimes k}} T_{g^{s}}^{0,\cdots,0}.$ The result follows.
\end{proof}

In the following we will denote the $V^{\otimes k}$-module $M^{a}\otimes V^{\left(k-1\right)}$
by $M^{a,0,\cdots,0}$ for simplicity.

\begin{theorem} \label{T_g(V) and T_g^s(0,..0)}Let $g = (1, 2, \dots, k)$ be a $k$-cycle, and let $1 \leq s, r < k$ with  $ \text{gcd}(s, r) = 1 $. Define $g^s = \tau_1  \cdots \tau_d$, $g^r = \sigma_1\cdots \sigma_m$, and $g^{s+r} = \gamma_1 \cdots \gamma_q$ as products of disjoint cycles, where $d = \gcd(s, k)$, $m = \gcd(r, k)$, and $q = \gcd(s+r, k)$. 
Then 
\begin{alignat}{1}
T_{g^{s}}^{0,\cdots,0}\boxtimes_{V^{\otimes k}}T_{g^{r}}^{0,\cdots,0}= & \sum_{t_{1},\cdots,t_{q}}\sum_{j}\frac{S_{t_{1}',j}\cdots S_{t_{q}',j}}{S_{0,j}^{k-d-m}}T_{g^{s+r}}^{t_{1},\cdots,t_{q}}.\label{0 and 000}
\end{alignat}

\end{theorem}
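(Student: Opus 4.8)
The plan is to write the product as a sum of the irreducible $g^{s+r}$-twisted modules guaranteed by Remark \ref{r2.14},
$T_{g^{s}}^{0,\cdots,0}\boxtimes_{V^{\otimes k}}T_{g^{r}}^{0,\cdots,0}=\sum_{t_{1},\cdots,t_{q}}n_{t_{1},\cdots,t_{q}}\,T_{g^{s+r}}^{t_{1},\cdots,t_{q}}$,
and then to pin down the nonnegative integers $n_{t_{1},\cdots,t_{q}}$ from two inputs only: the scalar identity $\frac{1}{S_{0,j}^{k-d-m}}=\sum_{t_{1},\cdots,t_{q}}n_{t_{1},\cdots,t_{q}}S_{t_{1},j}\cdots S_{t_{q},j}$ already proved in Lemma \ref{identity condition }, and a symmetry of the array $(n_{t_{1},\cdots,t_{q}})$ among its $q$ indices.

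To produce the symmetry I would fuse both sides with an untwisted module $M^{r}$ sitting at an arbitrary coordinate $j$, that is, apply $\big(V^{\otimes(j-1)}\otimes M^{r}\otimes V^{\otimes(k-j)}\big)\boxtimes-$. Since $\text{Rep}(V^{\otimes k})$ is a fusion category (Lemma \ref{Rep(V)-FusionAlg}) and an untwisted module commutes past a twisted one (Proposition \ref{Fusion rule prop twisted} with $g_{1}=1$), the left side may be computed as $\big(M^{r,j}\boxtimes T_{g^{s}}^{0,\cdots,0}\big)\boxtimes T_{g^{r}}^{0,\cdots,0}$ and also as $\big(M^{r,j}\boxtimes T_{g^{r}}^{0,\cdots,0}\big)\boxtimes T_{g^{s}}^{0,\cdots,0}$; by Theorem \ref{fusion product of untwisted and twisted} applied inside each cycle, $M^{r,j}\boxtimes T_{g^{s}}^{0,\cdots,0}$ depends on $j$ only through its $\langle g^{s}\rangle$-orbit, and $M^{r,j}\boxtimes T_{g^{r}}^{0,\cdots,0}$ only through its $\langle g^{r}\rangle$-orbit, so as $\gcd(s,r)=1$ forces $\langle g^{s},g^{r}\rangle=\langle g\rangle$ to act transitively (Lemma \ref{inter of orbits nonempty}), the resulting module is independent of $j$. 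On the right side, inserting $M^{r}$ into the $b$-th cycle of $g^{s+r}$ and using $T_{\gamma_{b}}(M^{r})\boxtimes T_{\gamma_{b}}(M^{t_{b}})\simeq T_{\gamma_{b}}(M^{r}\boxtimes_{V}M^{t_{b}})$ amounts to acting on $(n_{t_{1},\cdots,t_{q}})$ by $N(r)^{\mathrm T}$ in its $b$-th index, where $N(r)=(N_{r,i}^{k})_{i,k}$ is the fusion matrix of $V$. Hence, for every $r$, the array obtained by applying $N(r)^{\mathrm T}$ in the $b$-th index of $(n_{t_{1},\cdots,t_{q}})$ is the same for all $b$.

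Then it is linear algebra with the $S$-matrix. The $N(r)$ commute and, by Theorem \ref{Verlinde formula}(4), $S^{-1}N(r)S$ is diagonal; transposing, the vector $(S_{t',j})_{t}$ (the $j$-th column of $S^{-1}$) is a common eigenvector of the $N(r)^{\mathrm T}$ with eigenvalue $S_{r,j}/S_{0,j}$. Expanding $(n_{t_{1},\cdots,t_{q}})$ in the product of these eigenbases and using that the tuples $(S_{r,j}/S_{0,j})_{r}$ are pairwise distinct in $j$ (invertibility of $S$), the slot symmetry forces $n_{t_{1},\cdots,t_{q}}=\sum_{j}c_{j}\,S_{t_{1}',j}\cdots S_{t_{q}',j}$ for some scalars $c_{j}$. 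Plugging this into Lemma \ref{identity condition } and using $\sum_{t}S_{t',j}S_{t,l}=(S^{-1}S)_{j,l}=\delta_{j,l}$ (Theorem \ref{Verlinde formula}(1)), every factor collapses and one reads off $c_{l}=S_{0,l}^{-(k-d-m)}$, which gives $n_{t_{1},\cdots,t_{q}}=\sum_{j}\frac{S_{t_{1}',j}\cdots S_{t_{q}',j}}{S_{0,j}^{k-d-m}}$ and hence (\ref{0 and 000}).

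The step I expect to be the main obstacle is the symmetry argument of the second paragraph: one must justify carefully that $M^{r}$ can be transported to an arbitrary coordinate across the product of the two twisted modules (this uses reassociation and the commutation of untwisted past twisted modules in $\text{Rep}(V^{\otimes k})$, together with transitivity of $\langle g\rangle$), and that inserting it into a cycle of $g^{s+r}$ acts exactly as the fusion matrix of $V$ in one tensor index; after that, only the routine $S$-matrix identities recorded above are needed.
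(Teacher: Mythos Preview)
Your proposal is correct, and the main obstacle you flag is exactly the paper's Lemma \ref{move to 1st  factor}; your justification of it (commute the untwisted factor past either twisted factor, then use Theorem \ref{fusion product of untwisted and twisted} and transitivity of $\langle g\rangle$ via $\gcd(s,r)=1$) is the same argument the paper gives.

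Where you diverge is in how you exploit that lemma. The paper uses it to push all of $M^{t_1'},\dots,M^{t_q'}$ to the first coordinate, obtaining the explicit reduction
\[
n_{t_1,\dots,t_q}=\sum_{s_1,\dots,s_{q-1}}N_{t_2',s_1}^{t_1}N_{t_3',s_2}^{s_1}\cdots N_{t_q',s_{q-1}}^{s_{q-2}}\,n_{s_{q-1},0,\dots,0},
\]
then feeds this back into Lemma \ref{identity condition } to solve for $n_{s_{q-1},0,\dots,0}$ and finally expands using Lemma \ref{calculation of fusion and S-matrix}; this is done with a (harmless) case split $q=1,2,\ge 3$. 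You instead extract from the same geometric input the statement ``acting by $N(r)^{\mathrm T}$ in any one slot of $(n_{t_1,\dots,t_q})$ gives a slot-independent result,'' and then diagonalize: since the columns of $S^{-1}$ are simultaneous eigenvectors of all $N(r)^{\mathrm T}$ with pairwise distinct eigenvalue-tuples, the tensor expansion of $n$ is forced onto the diagonal, giving $n_{t_1,\dots,t_q}=\sum_j c_j\,S_{t_1',j}\cdots S_{t_q',j}$ immediately, after which Lemma \ref{identity condition } and $\sum_t S_{t',j}S_{t,l}=\delta_{j,l}$ pin down $c_l$. This is a cleaner and case-free packaging of the same ingredients. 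One small wording point: on the right-hand side you should invoke Theorem \ref{fusion product of untwisted and twisted} in the form $M^{r,j}\boxtimes T_{\gamma_b}(M^{t_b})\simeq T_{\gamma_b}(M^r\boxtimes_V M^{t_b})$ rather than write a fusion of two $\gamma_b$-twisted modules; the conclusion that this is $N(r)^{\mathrm T}$ in the $b$-th index is correct as stated.
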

\begin{proof}
Recall from  Lemma \ref{identity condition }, we have
$$
T_{g^{s}}^{0,\cdots,0}\boxtimes_{V^{\otimes k}} T_{g^{r}}^{0,\cdots,0}=\sum_{t_{1},\cdots,t_{q}}n_{t_{1},\cdots,t_{q}}T_{g^{s+r}}^{t_{1},\cdots,t_{q}}.
$$ and $n_{t_{1},\cdots,t_{q}}$ satisfies \begin{equation}
\frac{1}{S_{0,j}^{k-d-m}}=\sum_{t_{1},\cdots,t_{q}}n_{t_{1},\cdots,t_{q}}S_{t_{1},j}\cdots S_{t_{q},j}.\label{G eq-1}
\end{equation}
It suffices to prove \begin{equation}
n_{t_{1},\cdots,t_{q}}=\sum_{j}\frac{S_{t_{1}',j}\cdots S_{t_{q}',j}}{S_{0,j}^{k-d-m}}\label{n_{t_1,...,t_q}}
\end{equation}
for $t_{1},\cdots,t_{q}\in\left\{ 0,1,\cdots,p\right\} .$

Now we divide the proof into three cases based on the value of $q$. 
\begin{itemize}

\item Case  $q=1$:  Substituting $q=1$ into (\ref{G eq-1}), we obtain $\frac{1}{S_{0,j}^{k-d-m}}=\sum_{t_{1}}n_{t_{1}}S_{t_{1},j}$.
It follows that $n_{t_{1}}=\sum_{j}\frac{S_{t_{1}',j}}{S_{j,0}^{k-d-m}},$
which matches (\ref{n_{t_1,...,t_q}}) for $q=1$. 

\item Case $q=2:$  We denote the multiplicity of the module \( T_{g^{s+r}}^{t_{1},t_{2}} \) in the fusion product \( T_{g^{s}}^{0,\cdots,0} \boxtimes T_{g^{r}}^{0,\cdots,0} \) by 
\[
n_{t_{1},t_{2}} = \left\langle T_{g^{s}}^{0,\cdots,0} \boxtimes T_{g^{r}}^{0,\cdots,0}, T_{g^{s+r}}^{t_{1},t_{2}} \right\rangle.
\]
Using (\ref{G eq-1}), the coefficients \( n_{t_{1},t_{2}} \) satisfy the equation 
\begin{equation}
\frac{1}{S_{0,j}^{k-d-m}} = \sum_{t_{1},t_{2}} n_{t_{1},t_{2}} S_{t_{1},j} S_{t_{2},j}.
\label{q=00003D2 identity}
\end{equation}
In the mean time, 
\begin{alignat}{1}
 & n_{t_{1},t_{2}}\nonumber \\
= & \left\langle T_{g^{s}}^{0,\cdots,0}\boxtimes T_{g^{r}}^{0,\cdots,0},\left(M^{t_{1}}\otimes V^{\otimes(k-1)}\right)\boxtimes\cdots\boxtimes\left(V\otimes M^{t_{2}}\otimes V^{\otimes(k-2)}\right)\boxtimes T_{g^{s+r}}^{0,\cdots,0}\right\rangle \nonumber \\
= & \left\langle \left(M^{t_{1}'}\otimes V^{\otimes(k-1)}\right)\boxtimes\cdots\boxtimes\left(V\otimes M^{t_{2}'}\otimes V^{\otimes(k-2)}\right)\boxtimes T_{g^{s}}^{0,\cdots,0}\boxtimes T_{g^{r}}^{0,\cdots,0},T_{g^{s+r}}^{0,\cdots,0}\right\rangle \nonumber \\
= & \left\langle \left(\left(M^{t_{1}'}\boxtimes M^{t_{2}'}\right)\otimes V^{\otimes\left(k-1\right)}\right)\boxtimes_{V^{\otimes k}}T_{g^{s}}^{0,\cdots,0}\boxtimes_{V^{\otimes k}}T_{g^{r}}^{0,\cdots,0},T_{g^{s+r}}^{0,\cdots,0}\right\rangle \nonumber \\
= & \left\langle \sum_{a}N_{t_{1}',t_{2}'}^{a'}M^{a',0,\cdots,0}\boxtimes_{V^{\otimes k}}T_{g^{s}}^{0,\cdots,0}\boxtimes_{V^{\otimes k}}T_{g^{r}}^{0,\cdots,0},T_{g^{s+r}}^{0,\cdots,0}\right\rangle \nonumber \\
= & \sum_{a}N_{t_{1}',t_{2}'}^{a'}\left\langle T_{g^{s}}^{0,\cdots,0}\boxtimes_{V^{\otimes k}}T_{g^{r}}^{0,\cdots,0},M^{a,0,\cdots,0}\boxtimes_{V^{\otimes k}}T_{g^{s+r}}^{0,\cdots,0}\right\rangle \nonumber \\
= & \sum_{a}N_{t_{1}',t_{2}'}^{a'}\left\langle T_{g^{s}}^{0,\cdots,0}\boxtimes_{V^{\otimes k}}T_{g^{r}}^{0,\cdots,0},T_{g^{s+r}}^{0,\cdots,0}\right\rangle \nonumber \\
= & \sum_{a}N_{t_{1}',t_{2}'}^{a'}n_{a,0}\nonumber \\
= & \sum_{a}N_{t_{2}',a}^{t_{1}}n_{a,0},\label{q=00003D2-1}
\end{alignat}
where in the second identity, we apply category theory, and in the third identity, we utilize Lemma \ref{move to 1st factor}.
Now the right-hand side of (\ref{q=00003D2 identity}) is 
\begin{equation}
\sum_{t_{1},t_{2}}n_{t_{1},t_{2}}S_{t_{1},j}S_{t_{2},j}=\sum_{a,t_{1},t_{2}}N_{t_{2}',a}^{t_{1}}S_{t_{1},j}S_{t_{2},j}n_{a,0}=\frac{S_{a,j}}{S_{0,j}}n_{a,0}.\label{q=00003D2(2)}
\end{equation}
 By (\ref{q=00003D2 identity}) and (\ref{q=00003D2(2)}), we have $\frac{1}{S_{0,j}^{k-d-m-1}}=S_{a,j}n_{a,0}.$
Thus, $\sum_{j}\frac{S_{a',j}}{S_{0,j}^{k-d-m-1}}=\sum_{j}S_{a',j}S_{a,j}n_{a,0}$. It follows that 
\[
n_{a,0}=\sum_{j}\frac{S_{a',j}}{S_{0,j}^{k-d-m-1}}.
\]

Combining with (\ref{q=00003D2-1}), we obtain 
\[
n_{t_{1},t_{2}}=\sum_{j}\left(\sum_{a}N_{t_{2}',a}^{t_{1}}S_{a',j}\right)\frac{1}{S_{0,j}^{k-d-m-1}}=\sum_{j}\left(\frac{S_{t_{1}',j}S_{t_{2}',j}}{S_{0,j}}\right)\frac{1}{S_{0,j}^{k-d-m-1}}
=\sum_{j}\frac{S_{t_{1}',j}S_{t_{2}',j}}{S_{0,j}^{k-d-m}},
\]
 where the second identity follows from  Lemma \ref{calculation of fusion and S-matrix} (3). 

\item Case $q\ge3$: Now we present the proof in three steps.

\emph{Step 1.} Prove that 

\begin{equation}
n_{t_{1},\cdots,t_{q}}=\sum_{s_{1},s_{2},\cdots,s_{q-1}}N_{t_{2}',s_{1}}^{t_{1}}N_{t_{3}',s_{2}}^{s_{1}}\cdots N_{t_{q}',s_{q-1}}^{s_{q-2}}n_{s_{q-1},0,\cdots,0}.\label{Step 1: n variables to one}
\end{equation}
Indeed, if we denote the multiplicity of the module $T_{g^{s+r}}^{t_{1},\cdots,t_{q}}$
in the fusion product $T_{g^{s}}^{0,\cdots,0}\boxtimes T_{g^{r}}^{0,\cdots,0}$
by $n_{t_{1},\cdots,t_{n}}=\left\langle T_{g^{s}}^{0,\cdots,0}\boxtimes T_{g^{r}}^{0,\cdots,0},T_{g^{s+r}}^{t_{1},\cdots,t_{q}}\right\rangle .$
Then 
\begin{alignat*}{1}
 & n_{t_{1},\cdots,t_{n}}\\
= & \left\langle T_{g^{s}}^{0,\cdots,0}\boxtimes T_{g^{r}}^{0,\cdots,0},\left(M^{t_{1}}\otimes V^{\otimes(k-1)}\right)\boxtimes\cdots\boxtimes\left(V^{\otimes(q-1)}\otimes M^{t_{q}}\otimes V^{\otimes(k-q)}\right)\boxtimes T_{g^{s+r}}^{0,\cdots,0}\right\rangle \\
= & \left\langle \left(M^{t_{1}'}\otimes V^{\otimes(k-1)}\right)\boxtimes\cdots\boxtimes\left(V^{\otimes(q-1)}\otimes M^{t_{q}'}\otimes V^{\otimes(k-q)}\right)\boxtimes T_{g^{s}}^{0,\cdots,0}\boxtimes T_{g^{r}}^{0,\cdots,0},T_{g^{s+r}}^{0,\cdots,0}\right\rangle \\
= & \left\langle \left(\left(M^{t_{1}'}\boxtimes_{V}\cdots\boxtimes_{V}M^{t_{q}'}\right)\otimes V^{\otimes\left(k-1\right)}\right)\boxtimes T_{g^{s}}^{0,\cdots,0}\boxtimes T_{g^{r}}^{0,\cdots,0},T_{g^{s+r}}^{0,\cdots,0}\right\rangle \\
= & \left\langle \sum_{s_{1},s_{2},\cdots,s_{q-1}}N_{t_{1}',t_{2}'}^{s_{1}'}N_{s_{1}',t_{3}'}^{s_{2}'}\cdots N_{s_{q-2}',t_{q}'}^{s_{q-1}'}M^{s_{q-1}',0,\cdots,0}\boxtimes T_{g^{s}}^{0,\cdots,0}\boxtimes T_{g^{r}}^{0,\cdots,0},T_{g^{s+r}}^{0,\cdots,0}\right\rangle \\
= & \sum_{s_{1},s_{2},\cdots,s_{q-1}}N_{t_{2}',s_{1}}^{t_{1}}N_{t_{3}',s_{2}}^{s_{1}}\cdots N_{t_{q}',s_{q-1}}^{s_{q-2}}\left\langle T_{g^{s}}^{0,\cdots,0}\boxtimes T_{g^{r}}^{0,\cdots,0},\left(M^{s_{q-1}}\otimes V^{\otimes\left(k-1\right)}\right)\otimes T_{g^{s+r}}^{0,\cdots,0}\right\rangle \\
= & \sum_{s_{1},s_{2},\cdots,s_{q-1}}N_{t_{2}',s_{1}}^{t_{1}}N_{t_{3}',s_{2}}^{s_{1}}\cdots N_{t_{q}',s_{q-1}}^{s_{q-2}}\left\langle T_{g^{s}}^{0,\cdots,0}\boxtimes T_{g^{r}}^{0,\cdots,0},T_{g^{s+1}}^{s_{q-1},0,\cdots,0}\right\rangle \\
= & \sum_{s_{1},s_{2},\cdots,s_{q-1}}N_{t_{2}',s_{1}}^{t_{1}}N_{t_{3}',s_{2}}^{s_{1}}\cdots N_{t_{q}',s_{q-1}}^{s_{q-2}}n_{s_{q-1},0,\cdots,0}
\end{alignat*}
where in the first, second, third, the last identities we use (\ref{g^(s+r)}), the properties of fusion category, Lemma \ref{move to 1st  factor}, and symmetry property of fusion rules in (\ref{Fusion rule property for untwsted modules}),  respectively.

\emph{Step 2. }Find $n_{s_{q-1},0,\cdots,0}.$

Now by using (\ref{Step 1: n variables to one}), we obtain 
\begin{alignat}{1}
  &\sum_{t_{1},\cdots,t_{q}}n_{t_{1},\cdots,t_{q}}S_{t_{1},j}\cdots S_{t_{q},j}\nonumber \\
= & \sum_{t_{1},\cdots,t_{q}}\sum_{s_{1},s_{2},\cdots,s_{q-1}}N_{t_{2}',s_{1}}^{t_{1}}N_{t_{3}',s_{2}}^{s_{1}}\cdots N_{t_{q}',s_{q-1},}^{s_{q-2}}S_{t_{1},j}\cdots S_{t_{q},j}n_{s_{q-1},0,\cdots,0}\nonumber \\
= & \sum_{s_{q-1}}\left(\sum_{t_{1},\cdots,t_{q},s_{1},s_{2},\cdots,s_{q-2}}N_{t_{2}',s_{1}}^{t_{1}}N_{t_{3}',s_{2}}^{s_{1}}\cdots N_{t_{q}',s_{q-1},}^{s_{q-2}}S_{t_{1},j}\cdots S_{t_{q},j}\right)n_{s_{q-1},0,\cdots,0}
\end{alignat}

Note that 
\begin{alignat*}{1}
 & \sum_{t_{1},\cdots,t_{q},s_{1},s_{2},\cdots,s_{q-2}}N_{t_{2}',s_{1}}^{t_{1}}N_{t_{3}',s_{2}}^{s_{1}}\cdots N_{t_{q}',s_{q-1},}^{s_{q-2}}S_{t_{1},j}\cdots S_{t_{q},j}\\
= & \sum_{t_{3},\cdots,t_{q},s_{1},s_{2},\cdots,s_{q-2}}N_{t_{3}',s_{2}}^{s_{1}}\cdots N_{t_{q}',s_{q-1},}^{s_{q-2}}\left(\sum_{t_{1},t_{2}}N_{t_{2}',s_{1}}^{t_{1}}S_{t_{1},j}S_{t_{2},j}\right)S_{t_{3},j}\cdots S_{t_{q},j}\\
= & \sum_{t_{3},\cdots,t_{q},s_{1},s_{2},\cdots,s_{q-2}}N_{t_{3}',s_{2}}^{s_{1}}\cdots N_{t_{q}',s_{q-1}}^{s_{q-2}}\left(\frac{S_{s_{1},j}}{S_{0,j}}\right)S_{t_{3},j}\cdots S_{t_{q},j}\\
= & \sum_{t_{4},\cdots,t_{q},s_{2},\cdots,s_{q-2}}N_{t_{4}',s_{3}}^{s_{2}}\cdots N_{t_{q}',s_{q-1}}^{s_{q-2}}\left(\sum_{t_{3,}s_{1}}N_{t_{3}',s_{2}}^{s_{1}}S_{s_{1},j}S_{t_{3},j}\right)\frac{1}{S_{0,j}}S_{t_{4},j}\cdots S_{t_{q},j}\\
= & \sum_{t_{4},\cdots,t_{q},s_{2},\cdots,s_{q-2}}N_{t_{4}',s_{3}}^{s_{2}}\cdots N_{t_{q}',s_{q-1}}^{s_{q-2}}\left(\frac{S_{s_{2},j}}{S_{0,j}}\right)\frac{1}{S_{0,j}}S_{t_{4},j}\cdots S_{t_{q},j}\\
= & \frac{S_{s_{q-1},j}}{S_{0,j}^{q-1}},
\end{alignat*}
 we obtain that 
\begin{equation}
\sum_{t_{1},\cdots,t_{q}}n_{t_{1},\cdots,t_{q}}S_{t_{1},j}\cdots S_{t_{q},j}=\sum_{s_{q-1}}\left(\frac{S_{s_{q-1},j}}{S_{0,j}^{q-1}}\right) n_{s_{q-1},0,\cdots,0}. \label{G-eq-3}
\end{equation}

Combining (\ref{G eq-1}) and (\ref{G-eq-3}),   we obtain 
\[
\frac{1}{S_{0,j}^{k-d-m}}=\sum_{s_{q-1}}\frac{S_{s_{q-1},j}}{S_{0,j}^{q-1}}n_{s_{q-1},0,\cdots,0}, j=0,1,\cdots,p.
\]
Consequently, we have
\[
\sum_{j}\frac{S_{s_{q-1}',j}}{S_{0,j}^{k-d-m-q+1}}=\sum_{j,s_{q-1}}S_{s_{q-1}',j}S_{s_{q-1},j}n_{s_{q-1},0,\cdots,0}.
\]
Thus, 
\begin{equation}
n_{s_{q-1},0,\cdots,0}=\sum_{j}\frac{S_{s_{q-1}',j}}{S_{0,j}^{k-d-m-q+1}}.\label{j_e-1,0,0,0}
\end{equation}

\emph{Step 3. }Find $n_{t_{1},\cdots,t_{q}}.$

Using (\ref{Step 1: n variables to one}) and (\ref{j_e-1,0,0,0}),
we have 
\begin{alignat*}{1}
 & n_{t_{1},\cdots,t_{q}}\\
= & \sum_{j}\left(\sum_{s_{1},s_{2},\cdots,s_{q-1}}N_{t_{1}',t_{2}'}^{s_{1}'}N_{s_{1}',t_{3}'}^{s_{2}'}\cdots N_{s_{q-2}',t_{q}'}^{s_{q-1}'}S_{s_{q-1}',j}\right)\frac{1}{S_{0,j}^{k-d-m-q+1}}\\
= & \sum_{j}\frac{S_{t_{1}',j}\cdots S_{t_{q}',j}}{S_{0,j}^{q-1}}\cdot\frac{1}{S_{0,j}^{k-d-m-q+1}}\\
= & \sum_{j}\frac{S_{t_{1}',j}\cdots S_{t_{p}',j}}{S_{0,j}^{k-d-m}},
\end{alignat*}
where in the second identity we use Lemma \ref{calculation of fusion and S-matrix}
(5). The proof is completed.

\end{itemize}
\end{proof}

\subsection{\label{subsec:fusion step-2}The fusion product of the $g^{s}$-twisted
module $T_{g^{s}}^{i_{1},\cdots,i_{d}}$ and the $g^{r}$-twisted module
$T_{g^{r}}^{j_{1},\cdots,j_{m}}$}

Let \( s, r, d, m \) be as defined earlier. In this subsection, we apply the formula (\ref{0 and 000}) from Subsection \ref{subsec: fusion step-1} to determine \( T_{g^{s}}^{i_1, \dots, i_d} \boxtimes_{V^{\otimes k}} T_{g^{r}}^{j_1, \dots, j_m} \).

By applying Theorem \ref{T_g(V) and T_g^s(0,..0)}, the following general formula can be easily derived:

\begin{theorem} \label{Fusion product general}  Let $g = \left(1, 2, \cdots, k\right)$ be a $k$-cycle, 
and let $1 \leq s, r < k$ with $\gcd(r, s) = 1$. Suppose 
$g^{s} = \tau_{1} \cdots \tau_{d}$, $g^{r} = \sigma_{1} \cdots \sigma_{m}$, 
and $g^{s+r} = \gamma_{1} \cdots \gamma_{q}$ are products of disjoint cycles, 
where $d = \gcd(s, k)$, $m = \gcd(r, k)$, and $q = \gcd(s + r, k)$. Then for any $i_{1},\cdots,i_{d},$
$j_{1},\cdots,j_{m},$ $t_{1},\cdots,t_{q}\in\left\{ 0,1,\cdots,p\right\} $,
\begin{alignat*}{1}
 & T_{g^{s}}^{i_{1},\cdots,i_{d}}\boxtimes_{V^{\otimes k}}T_{g^{r}}^{j_{1},\cdots,j_{m}}\\
= & \sum_{t_{1},\cdots,t_{q},j}\frac{S_{i_{1},j}\cdots S_{i_{d},j}S_{j_{1},j}\cdots S_{j_{m},j}S_{t_{1}',j}\cdots S_{t_{q}',j}}{S_{0,j}^{k}}T_{g^{s+r}}^{t_{1},\cdots,t_{q}}.
\end{alignat*}

\end{theorem}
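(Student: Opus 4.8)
The plan is to peel off untwisted tensor factors so as to reduce the general case to Theorem~\ref{T_g(V) and T_g^s(0,..0)}. Fix the labelling of cycles so that, for each $c$ in the relevant range, $\tau_c$, $\sigma_c$ and $\gamma_c$ are the cycles of $g^s$, $g^r$ and $g^{s+r}$ that contain the letter $c$ (this is the convention implicit in (\ref{g^(s+r)}), and it is consistent because the supports of $\tau_1,\dots,\tau_d$, respectively of $\sigma_1,\dots,\sigma_m$, of $\gamma_1,\dots,\gamma_q$, are precisely the residue classes modulo $d$, respectively $m$, $q$). With this convention \cite[Theorem 6.3, Lemma 6.4]{DLXY}, applied exactly as in the derivation of (\ref{g^(s+r)}), gives
\[
T_{g^s}^{i_1,\dots,i_d}=(M^{i_1})^1\boxtimes\cdots\boxtimes(M^{i_d})^d\boxtimes T_{g^s}^{0,\dots,0},\qquad
T_{g^r}^{j_1,\dots,j_m}=(M^{j_1})^1\boxtimes\cdots\boxtimes(M^{j_m})^m\boxtimes T_{g^r}^{0,\dots,0},
\]
where $(M^a)^\ell$ abbreviates $V^{\otimes(\ell-1)}\otimes M^a\otimes V^{\otimes(k-\ell)}$.

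Using associativity and commutativity of $\boxtimes$ on $\text{Rep}(V^{\otimes k})$ and then substituting formula (\ref{0 and 000}), I would write
\[
T_{g^s}^{i_1,\dots,i_d}\boxtimes_{V^{\otimes k}} T_{g^r}^{j_1,\dots,j_m}
=(M^{i_1})^1\boxtimes\cdots\boxtimes(M^{i_d})^d\boxtimes(M^{j_1})^1\boxtimes\cdots\boxtimes(M^{j_m})^m\boxtimes\!\!\sum_{t_1,\dots,t_q,\,j}\!\frac{S_{t_1',j}\cdots S_{t_q',j}}{S_{0,j}^{\,k-d-m}}\,T_{g^{s+r}}^{t_1,\dots,t_q}.
\]
The next step is, for each fixed $(t_1,\dots,t_q)$, to evaluate the product of the untwisted factors with $T_{g^{s+r}}^{t_1,\dots,t_q}=T_{\gamma_1}(M^{t_1})\otimes\cdots\otimes T_{\gamma_q}(M^{t_q})$. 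Grouping the untwisted factors by which $\gamma_c$-block (a $V^{\otimes(k/q)}$-factor) they lie in, with $I_c=\{\,\ell\le d:\ell\equiv c\pmod q\,\}$ and $J_c=\{\,\ell\le m:\ell\equiv c\pmod q\,\}$, and applying Theorem~\ref{fusion product of untwisted and twisted-General} blockwise, this product equals $\bigotimes_{c=1}^q T_{\gamma_c}(P_c)$, where $P_c=\big(\boxtimes_{\ell\in I_c}M^{i_\ell}\big)\boxtimes\big(\boxtimes_{\ell\in J_c}M^{j_\ell}\big)\boxtimes M^{t_c}$ is an iterated $V$-fusion product. Expanding each $T_{\gamma_c}(P_c)$ into irreducibles via the fusion rules of $V$ rewrites it as $\sum_{u_1,\dots,u_q}\big(\prod_{c=1}^q\langle P_c,M^{u_c}\rangle\big)\,T_{g^{s+r}}^{u_1,\dots,u_q}$.

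It then remains to collect the coefficient of a fixed $T_{g^{s+r}}^{u_1,\dots,u_q}$ and to perform the $S$-matrix sums. Because this coefficient factors over $c=1,\dots,q$, the summation over $t_1,\dots,t_q$ splits; for each $c$, regarding $P_c$ as a function of $t_c$, the sum $\sum_{t_c}\langle P_c,M^{u_c}\rangle\,S_{t_c',j}$ telescopes — by parts (3) and (5) of Lemma~\ref{calculation of fusion and S-matrix} (and trivially when $I_c\cup J_c=\emptyset$) — to $\big(\prod_{\ell\in I_c}S_{i_\ell,j}\big)\big(\prod_{\ell\in J_c}S_{j_\ell,j}\big)S_{u_c',j}\big/S_{0,j}^{\,|I_c|+|J_c|}$. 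Taking the product over $c$, using that $\{I_c\}_c$ and $\{J_c\}_c$ partition $\{1,\dots,d\}$ and $\{1,\dots,m\}$ so that $\sum_c(|I_c|+|J_c|)=d+m$, and multiplying by the surviving factor $1/S_{0,j}^{\,k-d-m}$ from (\ref{0 and 000}), the denominator becomes $S_{0,j}^{\,k}$ and the numerator becomes $S_{i_1,j}\cdots S_{i_d,j}\,S_{j_1,j}\cdots S_{j_m,j}\,S_{u_1',j}\cdots S_{u_q',j}$; relabelling each $u_c$ as $t_c$ gives exactly the asserted formula. The one place that needs genuine care is this final bookkeeping — tracking which untwisted tensor factor is absorbed into which cycle of $g^{s+r}$ and verifying that the exponents of $S_{0,j}$ balance to $k$; everything else is a direct appeal to results established above.
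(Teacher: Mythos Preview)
Your proposal is correct and follows the same overall strategy as the paper: strip off the untwisted factors using the decomposition analogous to (\ref{g^(s+r)}), reduce to Theorem~\ref{T_g(V) and T_g^s(0,..0)}, and finish with the $S$-matrix identities of Lemma~\ref{calculation of fusion and S-matrix}.

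There is one noteworthy difference in execution. After writing $T_{g^s}^{i_1,\dots,i_d}\boxtimes T_{g^r}^{j_1,\dots,j_m}$ as a product of untwisted factors times $T_{g^s}^{0,\dots,0}\boxtimes T_{g^r}^{0,\dots,0}$, the paper invokes Lemma~\ref{move to 1st  factor} (which is where the hypothesis $\gcd(r,s)=1$ enters, via Lemma~\ref{inter of orbits nonempty}) to slide \emph{all} of the untwisted factors into slot~$1$ before substituting (\ref{0 and 000}); the result is then a single $V$-fusion product $M^{i_1}\boxtimes_V\cdots\boxtimes_V M^{j_m}$ sitting in slot~$1$, acting on each $T_{g^{s+r}}^{t_1,\dots,t_q}$, and the $S$-matrix computation is a one-line appeal to Lemma~\ref{calculation of fusion and S-matrix}(5). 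You instead substitute (\ref{0 and 000}) first and then distribute the untwisted factors among the $\gamma_c$-blocks, applying Theorem~\ref{fusion product of untwisted and twisted-General} blockwise. This is valid, but it forces exactly the bookkeeping you flag at the end (the index sets $I_c,J_c$, the partition identity $\sum_c(|I_c|+|J_c|)=d+m$, and the balancing of exponents). The paper's route avoids all of this: Lemma~\ref{move to 1st  factor} does the orbit-intersection work once, and no block-by-block accounting is needed.
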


\begin{proof}Similar to (\ref{g^(s+r)}), we have 
\[
T_{g^{s}}^{i_{1},\cdots,i_{d}}=\left(M^{i_{1}}\otimes V^{\otimes(k-1)}\right)\boxtimes\cdots\boxtimes\left(V^{\otimes(d-1)}\otimes M^{i_{d}}\otimes V^{\otimes(k-d)}\right)\boxtimes T_{g^{s}}^{0,\cdots,0}
\]
and 
\[
T_{g^{r}}^{j_{1},\cdots,j_{m}}=\left(M^{j_{1}}\otimes V^{\otimes(k-1)}\right)\boxtimes\cdots\boxtimes\left(V^{\otimes(m-1)}\otimes M^{j_{m}}\otimes V^{\otimes(k-m)}\right)\boxtimes T_{g^{r}}^{0,\cdots,0}.
\]
Then for $i_{1},\cdots,i_{d},j_{1},\cdots,j_{m}\in\left\{ 0,1,\cdots,p\right\} $,
we have 
\begin{alignat*}{1}
 & T_{g^{s}}^{i_{1},\cdots,i_{d}}\boxtimes_{V^{\otimes k}}T_{g^{r}}^{j_{1},\cdots,j_{m}}\\
= & \left(\left(M^{i_{1}}\otimes V^{\otimes(k-1)}\right)\boxtimes_{V^{\otimes k}}\cdots\boxtimes_{V^{\otimes k}}\left(V^{\otimes(d-1)}\otimes M^{i_{d}}\otimes V^{\otimes(k-d)}\right)\boxtimes_{V^{\otimes k}}T_{g^{s}}^{0,\cdots,0}\right)\\
 & \boxtimes_{V^{\otimes k}}\left(\left(M^{j_{1}}\otimes V^{\otimes(k-1)}\right)\boxtimes_{V^{\otimes k}}\cdots\boxtimes_{V^{\otimes k}}\left(V^{\otimes(m-1)}\otimes M^{j_{m}}\otimes V^{\otimes(k-m)}\right)\boxtimes_{V^{\otimes k}}T_{g^{r}}^{0,\cdots,0}\right)\\
= & \left(M^{i_{1}}\otimes V^{\otimes(k-1)}\right)\boxtimes_{V^{\otimes k}}\cdots\boxtimes_{V^{\otimes k}}\left(V^{\otimes(d-1)}\otimes M^{i_{d}}\otimes V^{\otimes(k-d)}\right)\\
 & \boxtimes\left(\left(M^{j_{1}}\otimes V^{\otimes(k-1)}\right)\boxtimes_{V^{\otimes k}}\cdots\boxtimes_{V^{\otimes k}}\left(V^{\otimes(m-1)}\otimes M^{j_{m}}\otimes V^{\otimes(k-m)}\right)\right)\boxtimes T_{g^{s}}^{0,\cdots,0}\boxtimes_{V^{\otimes k}}T_{g^{r}}^{0,\cdots,0}\\
= & \left(\left(M^{i_{1}}\boxtimes_{V}\cdots\boxtimes_{V}M^{i_{d}}\boxtimes_{V}M^{j_{1}}\boxtimes_{V}\cdots\boxtimes_{V}M^{j_{m}}\right)\otimes V^{\otimes\left(k-d-m\right)}\right)\boxtimes\left(T_{g^{s}}^{0,\cdots,0}\boxtimes_{V^{\otimes k}}T_{g^{r}}^{0,\cdots,0}\right),
\end{alignat*}
where in the last identity we use Lemma \ref{move to 1st  factor}.
By applying Lemma \ref{calculation of fusion and S-matrix} (4) and
(5) along with analogous computations from Theorem \ref{T_g(V) and T_g^s(0,..0)},
we can readily deduce that
\begin{alignat*}{1}
 & T_{g^{s}}^{i_{1},  \cdots,i_{d}}\boxtimes_{V^{\otimes k}}T_{g^{r}}^{j_{1},  \cdots,j_{m}}\\
= & \sum_{t_{1},\cdots,t_{q}}\left(\sum_{j}\frac{S_{i_{1},j} \cdots S_{i_{d},j}S_{j_{1},j}\cdots S_{j_{m},j}S_{t_{1}',j}\cdots S_{t_{q}',j}}{S_{0,j}^{k}}\right)T_{g^{s+r}}^{t_{1},\cdots,t_{q}}.
\end{alignat*}
 This completes our proof.

\end{proof}

\begin{remark} $\sum_{j}\frac{S_{i_{1},j}\cdots S_{i_{d},j}S_{j_{1},j}\cdots S_{j_{m},j}S_{t_{1}',j}\cdots S_{t_{q}',j}}{S_{0,j}^{k}}$
are non-negative integers.

\end{remark}

\section{The Fusion Product $\left(T_{g}\left(V\right)\right)^{\boxtimes k}$}

Let $g=\left(1, 2, \cdots,k\right)$ be a $k$-cycle. In this section,
we establish a formula of $\left(T_{g}^{0}\right)^{\boxtimes k}$.
For convenience, we will denote $M^{i_{1}}\otimes\cdots\otimes M^{i_{k}}$
by $M^{i_{1},\cdots,i_{k}}$. 

\begin{proposition} \label{twisted to k formula} Let $g=\left(1, 2, \cdots,k\right)$
be a $k$-cycle. Then
\[
\left(T_{g}^{0}\right)^{\boxtimes k}=\sum_{i_{1}, i_2, \cdots,i_{k}}\left(\sum_{a}S_{0,a}^{2-2h}\prod_{\ell=1}^{k}\frac{S_{i_{\ell}',a}}{S_{0,a}}\right)M^{i_{1}, i_2, \cdots,i_{k}},
\]
where $h=\frac{\left(k-1\right)\left(k-2\right)}{2}$. 
\end{proposition}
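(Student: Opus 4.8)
The plan is to prove, by induction on $n$, the following sharper statement for all $1\le n\le k$: writing $d_n=\gcd(n,k)$ and $g^n=\tau_1\cdots\tau_{d_n}$ as a product of disjoint $\frac{k}{d_n}$-cycles,
\begin{equation}
\left(T_{g}^{0}\right)^{\boxtimes n}=\sum_{t_1,\cdots,t_{d_n}}\left(\sum_a S_{0,a}^{\,e_n}\prod_{\ell=1}^{d_n}\frac{S_{t_{\ell}',a}}{S_{0,a}}\right)T_{g^n}^{t_1,\cdots,t_{d_n}},\qquad e_n:=1+d_n+(n-1)(1-k).\label{kfold-induction}
\end{equation}
At $n=k$ we have $d_k=k$ and $T_{g^k}^{t_1,\cdots,t_k}=M^{t_1,\cdots,t_k}$, while $e_k=1+k+(k-1)(1-k)=3k-k^2=2-(k-1)(k-2)=2-2h$, so \eqref{kfold-induction} specializes to exactly the asserted identity; thus it suffices to prove \eqref{kfold-induction}.

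For the base case I would take $n=1$, where $\left(T_g^0\right)^{\boxtimes 1}=T_g^0=T_{g}^{0}$ and $e_1=2$, so the coefficient of $T_g^{t_1}$ on the right of \eqref{kfold-induction} is $\sum_a S_{0,a}S_{t_1',a}=(S^2)_{0,t_1'}=\delta_{0,(t_1')'}=\delta_{0,t_1}$ by Theorem \ref{Verlinde formula}(1)--(2). (One could instead start at $n=2$, reading off the coefficients directly from Theorem \ref{T_g(V) and T_g^s(0,..0)}.) For the inductive step, assuming \eqref{kfold-induction} for some $n$ with $1\le n<k$, I would use associativity of the fusion product on $\text{Rep}(V^{\otimes k})$ (established in \cite{DLXY}) to write $\left(T_g^0\right)^{\boxtimes(n+1)}=\left(T_g^0\right)^{\boxtimes n}\boxtimes_{V^{\otimes k}}T_g^0$, and then apply Theorem \ref{Fusion product general} with $s=n$, $r=1$ (legitimate since $\gcd(n,1)=1$ and $1\le n<k$), so that $d=d_n$, $m=1$, $q=\gcd(n+1,k)=d_{n+1}$, and the second factor $T_g^0=T_{g}^{0}$ has its single index equal to $0$. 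This gives, for each tuple $\mathbf i=(i_1,\cdots,i_{d_n})$,
\[
T_{g^n}^{\mathbf i}\boxtimes_{V^{\otimes k}}T_g^0=\sum_{t_1,\cdots,t_{d_{n+1}},\,j}\frac{S_{i_1,j}\cdots S_{i_{d_n},j}\;S_{0,j}\;S_{t_1',j}\cdots S_{t_{d_{n+1}}',j}}{S_{0,j}^{\,k}}\,T_{g^{n+1}}^{t_1,\cdots,t_{d_{n+1}}}.
\]

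Substituting \eqref{kfold-induction} for $\left(T_g^0\right)^{\boxtimes n}$, interchanging the finitely many summations, and carrying out the $d_n$ inner sums via the orthogonality $\sum_{i_{\ell}}S_{i_{\ell}',a}S_{i_{\ell},j}=(S^{-1}S)_{a,j}=\delta_{a,j}$ (using $S_{i',a}=(S^{-1})_{a,i}$ and symmetry of $S$, Theorem \ref{Verlinde formula}(1)--(2)) collapses the repeated delta to $a=j$; collecting the powers of $S_{0,j}$ then yields the coefficient of $T_{g^{n+1}}^{\mathbf t}$ in the form $\sum_j S_{0,j}^{\,e_{n+1}}\prod_{i=1}^{d_{n+1}}\frac{S_{t_i',j}}{S_{0,j}}$ with $e_{n+1}=e_n+d_{n+1}-d_n+1-k$, which agrees with the closed form $1+d_{n+1}+n(1-k)$. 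This closes the induction and proves the proposition.

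The one genuinely delicate point is the exponent bookkeeping: one must correctly combine the power $S_{0,j}^{\,e_n-d_n}$ coming from the inductive coefficient, the power $S_{0,j}^{\,1-k}$ contributed by the $j_1=0$ factor against $S_{0,j}^{\,-k}$ in Theorem \ref{Fusion product general}, and the shift $d_n\to d_{n+1}$ in the number of $\frac{S_{t_i',j}}{S_{0,j}}$ factors, and then verify that the telescoped sum $e_k=e_1+\sum_{m=2}^{k}\bigl(d_m-d_{m-1}+1-k\bigr)=1+k+(k-1)(1-k)$ indeed equals $2-2h$. Everything else is routine: existence and associativity of all fusion products used are guaranteed by the fusion-category structure of $\text{Rep}(V^{\otimes k})$ (Lemma \ref{Rep(V)-FusionAlg}), and the hypotheses $1\le s,r<k$ and $\gcd(s,r)=1$ of Theorem \ref{Fusion product general} hold at every step because only one copy of $T_g^0$ is fused at a time ($r=1$) and $s=n\le k-1$.
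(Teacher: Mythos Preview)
Your proof is correct and is genuinely cleaner than the paper's. You prove the stronger inductive statement \eqref{kfold-induction} for all $1\le n\le k$ by repeatedly applying Theorem \ref{Fusion product general} with $(s,r)=(n,1)$; the orthogonality $\sum_{i}S_{i',a}S_{i,j}=\delta_{a,j}$ collapses the intermediate sums exactly as you describe, and the exponent recursion $e_{n+1}=e_n+d_{n+1}-d_n+1-k$ telescopes to $e_k=3k-k^2=2-2h$. The hypotheses of Theorem \ref{Fusion product general} are satisfied at every step (including the last, $s=k-1$, $r=1$, where $q=\gcd(k,k)=k$ and $T_{g^k}^{t_1,\dots,t_k}=M^{t_1,\dots,t_k}$).

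The paper instead proceeds in two separate pieces: it first determines $(T_g^0)^{\boxtimes(k-1)}=\sum_i n_i T_{g^{k-1}}^i$ by an eigenvalue argument in the fusion algebra (the $\frac{S_{\lambda,\mu}}{S_{0,\mu}}$ identity from \cite{DRX3}, essentially re-deriving Lemma \ref{identity condition } for this special case), and then computes $T_{g^{k-1}}^0\boxtimes T_g^0$ directly from the symmetry of twisted fusion rules (Proposition \ref{Fusion rule prop twisted}), the duality $T_{g^{k-1}}(V)'=T_g(V)$ (Proposition \ref{twisted dual}), and the untwisted--twisted fusion formula (Theorem \ref{fusion product of untwisted and twisted-General}); a final manipulation combines these. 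Your route bypasses all of this by recognizing that Proposition \ref{twisted to k formula} is an iterated consequence of Theorem \ref{Fusion product general}. The payoff of your approach is economy and a formula valid for every intermediate power $(T_g^0)^{\boxtimes n}$; the paper's approach, by contrast, exhibits an independent derivation of the final step that does not rely on Theorem \ref{Fusion product general}, which may be of separate interest since it uses only Section 3 tools.
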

\begin{proof}
First note that 
\begin{equation}
\left(T_{g}^{0}\right)^{\boxtimes\left(k-1\right)}=\sum_{i}n_{i}T_{g^{k-1}}^{i}.\label{power k eq-1}
\end{equation}

Let $\lambda_{1}=\left(T_{g}^{0}\right)^{0}$, $\lambda_2=\lambda_{i,s}=\left(T_{g^{k-1}}^{i}\right)^{s}$ for
$s=0,...,k-1$ 
and $\mu=\left(M^{j,\cdots,j}\right)^{0}.$ Similar to arguments in Lemma \ref{identity condition }, one has  $a_{\lambda_{1}}=T_{g}^0$,
$a_{\lambda_{2}}=T_{g^{k-1}}^{i}$ and $a_{\mu}=M^{j,\cdots,j}.$
Note that
\begin{alignat*}{1}
 & \frac{S_{\lambda_{1},\mu}}{S_{0,\mu}}=\frac{S_{\left(T_{g}^{0}\right)^{0},\left(M^{j,\cdots,j}\right)^{0}}}{S_{\left(V^{\otimes k}\right)^{G},\left(M^{j,\cdots,j}\right)^{0}}}=\frac{\frac{1}{k}e^{\frac{2\pi in}{k}}S_{T_{g}^0,M^{j,\cdots,j}}}{\frac{1}{k}S_{V,M^{j}}^{k}}=\frac{\frac{1}{k}e^{\frac{2\pi in}{k}}S_{0,j}}{\frac{1}{k}S_{0,j}^{k}}=e^{\frac{2\pi in}{k}}S_{0,j}^{1-k}
\end{alignat*}
and 
\[
\frac{S_{\lambda_{2},\mu}}{S_{0,\mu}}=\frac{S_{\left(T_{g^{-1}}^{i}\right)^{s},\left(M^{j,\cdots,j}\right)^{0}}}{S_{\left(V^{\otimes k}\right)^{G},\left(M^{j,\cdots,j}\right)^{0}}}=\frac{\frac{1}{k}e^{\frac{2\pi in\left(k-1\right)}{k}}S_{T_{g^{-1}}^{i},M^{j,\cdots,j}}}{\frac{1}{k}S_{0,j}^{k}}=\frac{\frac{1}{k}e^{\frac{2\pi in\left(k-1\right)}{k}}S_{i,j}}{\frac{1}{k}S_{0,j}^{k}}=e^{\frac{2\pi in\left(k-1\right)}{k}}\cdot\frac{S_{i,j}}{S_{0,j}^{k}}.
\]
Let \( a_{\lambda_{1}}^{k-1} \) denote the fusion product of \( k-1 \) copies of \( a_{\lambda_{1}} \), and let \( \lambda_{1}^{k-1} \) denote the fusion product of \( k-1 \) copies of \( \lambda_{1} \).  Assume that $\lambda_1^{k-1}=\sum_{i,s}n_{\lambda_1}^{\lambda_{i,s}}\lambda_{i,s}.$  Then
\[
a_{\lambda_{1}}^{k-1}=a_{\lambda_{1}^{k-1}}=\sum_{i,s}n_{\lambda_{1}}^{\lambda_{i,s}}a_{\lambda_{i,s}},
\]
  That is,
 \[\left(T_{g}^{0}\right)^{\boxtimes\left(k-1\right)}=\sum_{i,s}n_{\lambda_{1}}^{\lambda_{i,s}} T_{g^{k-1}}^{i}.\] 
 From equation (\ref{power k eq-1})  we see that $n_i=\sum_{s=0}^{k-1}n_{\lambda_{1}}^{\lambda_{i,s}}.$
 By  \cite[Theorem 7.1]{DRX3} we have 
\[
\left(\frac{S_{\lambda_{1},\mu}}{S_{0,\mu}}\right)^{k-1}=\sum_{i,s}n_{\lambda_{1}}^{\lambda_{i,s}}\frac{S_{\lambda_{i,s},\mu}}{S_{0,\mu}},
\]
or equivalently,
\[
\left(e^{\frac{2\pi in}{k}}S_{0,j}^{1-k}\right)^{k-1}=\sum_{i,s}n_{\lambda_{1}}^{\lambda_{i,s}}e^{\frac{2\pi in\left(k-1\right)}{k}}\cdot\frac{S_{i,j}}{S_{0,j}^{k}}=\sum_{i}n_{i}e^{\frac{2\pi in\left(k-1\right)}{k}}\frac{S_{i,j}}{S_{0,j}^{k}}.
\]
That is, $\frac{1}{S_{0,j}^{k^{2}-3k+1}}=\sum_{i}n_{i}S_{i,j}$
for $j=0,1,\cdots,p$ and  $n_{i}=\sum_{j}\frac{S_{i',j}}{S_{0,j}^{k^{2}-3k+1}}.$
So we have
\begin{equation}
\left(T_{g}^{0}\right)^{\boxtimes\left(k-1\right)}=\sum_{i,j}\frac{S_{i',j}}{S_{0,j}^{k^{2}-3k+1}}\left(M^{i}\otimes V^{\otimes\left(k-1\right)}\right)\boxtimes_{V^{\otimes k}}T_{g^{k-1}}^{0}.\label{n_i}
\end{equation}

Since the fusion product of a $g$-twisted module and a $g^{k-1}$-twisted
module is an untwisted module. One can write 
\[
T_{g^{k-1}}^{0}\boxtimes_{V^{\otimes k}}T_{g}^{0}=\sum_{i_{1},  \cdots,i_{k}}m_{i_{1}, \cdots,i_{k}}M^{i_{1},  \cdots,i_{k}}.
\]
Now we have 
\begin{alignat}{1}
 & \left(T_{g}^{0}\right)^{\boxtimes k}\nonumber \\
= & \left(T_{g}^{0}\right)^{\boxtimes\left(k-1\right)}\boxtimes_{V^{\otimes k}}T_{g}^{0}\nonumber \\
= & \sum_{i,j}\frac{S_{i',j}}{S_{0,j}^{k^{2}-3k+1}}\left(M^{i}\otimes V^{\otimes\left(k-1\right)}\right)\boxtimes_{V^{\otimes k}}T_{g^{-1}}^{0}\boxtimes_{V^{\otimes k}}T_{g}^{0}\nonumber \\
= & \sum_{i,j}\frac{S_{i',j}}{S_{0,j}^{k^{2}-3k+1}}\left(M^{i}\otimes V^{\otimes\left(k-1\right)}\right)\boxtimes_{V^{\otimes k}}\left(\sum_{i_{1},\cdots,i_{k}}m_{i_{1},\cdots,i_{k}}M^{i_{1},\cdots,i_{k}}\right)\nonumber \\
= & \sum_{i,j,i_{1},\cdots,i_{k}}\frac{S_{i',j}}{S_{0,j}^{k^{2}-3k+1}}m_{i_{1},\cdots,i_{k}}\left(M^{i}\otimes V^{\otimes\left(k-1\right)}\right)\boxtimes_{V^{\otimes k}}M^{i_{1},\cdots,i_{k}}.\label{T_g(V)^k}
\end{alignat}
Note that by properties of fusion rules and dual module of twisted
modules in Propositions \ref{Fusion rule prop twisted} and \ref{twisted dual},
we have 
\[
m_{i_{1},\cdots,i_{k}}=N_{T_{g^{k-1}}^{0},T_{g}^{0}}^{M^{i_{1},\cdots,i_{k}}}=N_{T_{g}^{0}\circ g^{k-1},T_{g^{k-1}}^{0}}^{M^{i_{1},\cdots,i_{k}}}=N_{T_{g}^{0},M^{i_{1}',\cdots,i_{k}'}}^{\left(T_{g^{k-1}}^{0}\right)^{'}}=N_{T_{g}^{0},M^{i_{1}',\cdots,i_{k}'}}^{T_{g}^{0}}.
\]

By Theorem \ref{fusion product of untwisted and twisted-General},
\begin{align*}
T_{g}^{0}\boxtimes_{V^{\otimes k}}M^{i_{1}',\cdots,i_{k}'} & =T_{g}\left(M^{i_{1}'}\boxtimes_{V}\cdots\boxtimes_{V}M^{i_{k}'}\right) =\sum_{j_{1}',\cdots j_{k-1}'}N_{i_{1}',i_{2}'}^{j_{1}'}N_{j_{1}',i_{3}'}^{j_{2}'}\cdots N_{j_{k-3}',i_{k-1}'}^{j_{k-2}'}N_{j_{k-2}',i_{k}'}^{j_{k-1}'}T_{g}^{j_{k-1}'}.
\end{align*}
Thus 
\[
m_{i_{1},\cdots,i_{k}}=N_{T_{g}^{0},M^{i_{1}',\cdots,i_{k}'}}^{T_{g}^{0}}=\sum_{j_{1}',\cdots j_{k-2}'}N_{i_{1}',i_{2}'}^{j_{1}'}N_{j_{1}',i_{3}'}^{j_{2}'}\cdots N_{j_{k-3}',i_{k-1}'}^{j_{k-2}'}N_{j_{k-2}',i_{k}'}^{0}.
\]
Since $N_{j_{k-2}',i_{k}'}^{0}=N_{j_{k-2}',0}^{i_{k}},$ we obtain
$i_{k}=j_{k-2}'.$ Now using Lemma \ref{calculation of fusion and S-matrix} gives
\begin{alignat}{1}
 & m_{i_{1},\cdots,i_{k}}\nonumber \\
= & \sum_{j_{1}',\cdots,j_{k-3}'}N_{i_{1}',i_{2}'}^{j_{1}'}N_{j_{1}',i_{3}'}^{j_{2}'}\cdots N_{j_{k-5}',i_{k-3}'}^{j_{k-4}'}N_{j_{k-4}',i_{k-2}'}^{j_{k-3}'}N_{j_{k-3}',i_{k-1}'}^{i_{k}}N_{i_{k},i_{k}'}^{0}\nonumber \\
= & \sum_{j_{1}',\cdots,j_{k-3}'}N_{i_{1}',i_{2}'}^{j_{1}'}N_{j_{1}',i_{3}'}^{j_{2}'}\cdots N_{j_{k-5}',i_{k-3}'}^{j_{k-4}'}N_{j_{k-4}',i_{k-2}'}^{j_{k-3}'}\sum_{t}\frac{S_{j_{k-3}',t}S_{i_{k-1}',t}S_{i_{k}',t}}{S_{0,t}}\nonumber \\
= & \sum_{t}\left(\sum_{j_{1}',\cdots,j_{k-3}'}N_{i_{1}',i_{2}'}^{j_{1}'}N_{j_{1}',i_{3}'}^{j_{2}'}\cdots N_{j_{k-5}',i_{k-3}'}^{j_{k-4}'}N_{j_{k-4}',i_{k-2}'}^{j_{k-3}'}S_{j_{k-3}',t}\right)\cdot\frac{S_{i_{k-1}',t}S_{i_{k}',t}}{S_{0,t}}\nonumber \\
= & \sum_{t}\frac{S_{i_{1}',t}\cdots S_{i_{k}',t}}{S_{0,t}^{k-2}}\nonumber \\
= & \sum_{a}S_{0,a}^{2}\prod_{\ell=1}^{k}\frac{S_{i_{\ell}',a}}{S_{0,a}}.\label{m_(i_1,...,i_k)}
\end{alignat}

Plugging equation (\ref{m_(i_1,...,i_k)}) into (\ref{T_g(V)^k}), we
obtain 
\begin{align}
 & \left(T_{g}^{0}\right)^{\boxtimes k}\nonumber \\
= & \sum_{i,t,i_{1},\cdots,i_{k}}S_{0,t}^{-k^{2}+3k-1}S_{i',t}\cdot\sum_{a}S_{0,a}^{2}\prod_{\ell=1}^{k}\frac{S_{i_{\ell}',a}}{S_{0,a}}\cdot\left(M^{i}\otimes V^{\otimes\left(k-1\right)}\right)\boxtimes_{V^{\otimes k}}M^{i_{1}, i_2, \cdots,i_{k}}\nonumber \\
= & \sum_{i,t,i_{1},\cdots,i_{k},a}S_{0,t}^{-k^{2}+3k-1}S_{i',t}\cdot S_{0,a}^{2}\prod_{\ell=1}^{k}\frac{S_{i_{\ell}',a}}{S_{0,a}}\cdot\sum_{j}N_{i,i_{1}}^{j}M^{j,i_{2},\cdots,i_{k}}\nonumber \\
= & \sum_{i_{2},\cdots,i_{k},a,j}\left(\sum_{i,t,i_{1}}S_{0,t}^{-k^{2}+3k-1}S_{i',t}\cdot S_{0,a}^{2}\prod_{\ell=1}^{k}\frac{S_{i_{\ell}',a}}{S_{0,a}}\cdot N_{i,i_{1}}^{j}\right)M^{j,i_{2},\cdots,i_{k}}.\label{Section 5-1}
\end{align}

Note that
\begin{alignat}{1}
 & \sum_{i,t,i_{1}}S_{0,t}^{-k^{2}+3k-1}S_{i',t}\cdot S_{0,a}^{2}\prod_{\ell=1}^{k}\frac{S_{i_{\ell}',a}}{S_{0,a}}\cdot N_{i,i_{1}}^{j}\nonumber \\
= & \sum_{i,t}S_{0,t}^{-k^{2}+3k-1}S_{i',t}\cdot S_{0,a}\prod_{\ell=2}^{k}\frac{S_{i_{\ell}',a}}{S_{0,a}}\cdot\left(\sum_{i_{1}}S_{i_{1}',a}N_{i,j'}^{i_{1}'}\right)\nonumber \\
= & \sum_{i,t}S_{0,t}^{-k^{2}+3k-1}S_{i',t}\cdot S_{0,a}\prod_{\ell=2}^{k}\frac{S_{i_{\ell}',a}}{S_{0,a}}\cdot\left(\frac{S_{i,a}S_{j',a}}{S_{0,a}}\right)\nonumber \\
= & \sum_{i,t}S_{0,t}^{-k^{2}+3k}\left(S_{i',t}S_{i,a}\right)\cdot \prod_{\ell=2}^{k}\frac{S_{i_{\ell}',a}}{S_{0,a}}\cdot\frac{S_{j',a}}{S_{0,a}}\nonumber \\
= & S_{0,a}^{-k^{2}+3k}\prod_{\ell=2}^{k}\frac{S_{i_{\ell}',a}}{S_{0,a}}\cdot\frac{S_{j',a}}{S_{0,a}},\label{Section 5-2}
\end{alignat}
where in the second identity we use Lemma \ref{calculation of fusion and S-matrix}
and in the fourth identity we use Verlinde formula in Theorem \ref{Verlinde formula}.
Combining (\ref{Section 5-1}) and (\ref{Section 5-2}), we obtain
\begin{alignat*}{1}
\left(T_{g}^{0}\right)^{\boxtimes k} & =\sum_{i_{2},\cdots,i_{k},a,j}\left(S_{0,a}^{-k^{2}+3k}\prod_{\ell=2}^{k}\frac{S_{i_{\ell}',a}}{S_{0,a}}\cdot\frac{S_{j',a}}{S_{0,a}}\right)M^{j,i_{2},\cdots,i_{k}}\\
 & =\sum_{i_{1},i_{2},\cdots,i_{k},a}\left(S_{0,a}^{-k^{2}+3k}\prod_{\ell=1}^{k}\frac{S_{i_{\ell}',a}}{S_{0,a}}\right)M^{i_{1},i_{2},\cdots,i_{k}}\\
 & =\sum_{i_{1},i_{2},\cdots,i_{k}}\left(\sum_{a}S_{0,a}^{2-2h}\prod_{\ell=1}^{k}\frac{S_{i_{\ell}',a}}{S_{0,a}}\right)M^{i_{1},i_{2},\cdots,i_{k}},
\end{alignat*}
where $h=\frac{\left(k-1\right)\left(k-2\right)}{2}$.

\end{proof}

\begin{remark}
For an interesting perspective on the above fusion rules  from the construction of conformal blocks associated with Riemann surfaces, see \cite{G}.
\end{remark}

\section{Example: $\left(\left(V_{\mathbb{Z}\alpha}\right)^{\otimes4}\right)^{\left\langle g\right\rangle }$
with $g=\left(1\ 2\ 3\ 4\right)$}

In this section, we consider the permutation orbifold vertex operator
algebra $\left(\left(V_{\mathbb{Z}\alpha}\right)^{\otimes 4}\right)^{\left\langle g \right\rangle}$,
where $V_{\mathbb{Z}\alpha}$ is the rank one lattice vertex operator algebra
with $\left\langle \alpha, \alpha \right\rangle = 2n$ for $n > 0$. It is known that
all irreducible $V_{\mathbb{Z}\alpha}$-modules are $V_{\mathbb{Z}\alpha + \lambda_{\ell}}$, where
$\lambda_{\ell} = \frac{\ell}
{2n}$ with $\ell = 0, \dots, 2n-1$ \cite{D}.
Denote $M^{\ell} = V_{\mathbb{Z}\alpha + \lambda_{\ell}}$. Thus,
$\text{Irr}\left(V_{\mathbb{Z}\alpha}\right) = \left\{ M^{0}, M^{1}, \dots, M^{2n-1} \right\}$.
Referring to page 106 of \cite{S}, we have:
\[
Z_{\mathbb{Z}\alpha+\lambda_{\ell}}\left(-\frac{1}{\tau}\right)=\sum_{j=0}^{2n-1}\frac{1}{\sqrt{2n}}e^{-2\pi i\left(\lambda_{\ell},\lambda_{j}\right)}Z_{\mathbb{Z}\alpha+\lambda_{j}}\left(\tau\right).
\]
Thus the entries of $S$-matrix $\left(S_{\ell,j}\right)$ for $V_{\mathbb{Z}\alpha}$
is given by 
\begin{equation}
S_{\ell,j}:=S_{\lambda_{\ell},\lambda_{j}}=\frac{1}{\sqrt{2n}}e^{-2\pi i\left(\lambda_{\ell},\lambda_{j}\right)}=\frac{1}{\sqrt{2n}}e^{-2\pi i\left(\frac{\ell}{2n}\alpha,\frac{j}{2n}\alpha\right)}=\frac{1}{\sqrt{2n}}e^{-\frac{\pi\ell ji}{n}},\quad\ell,j=0,1,\cdots,2n-1.\label{S-matrix of lattice VOA of rank one}
\end{equation}

For $g=\left(1\ 2\ 3\ 4\right)$, it is easy to see that $g^{2}=\sigma_{1}\sigma_{2}$
where $\sigma_{1},\sigma_{2}$ are $2$-cycles. For $\ell, q_1, q_2\in\{0, 1, \cdots, 2n-1\}$, denote $T_{g}\left(M^{\ell}\right)$
and $T_{\sigma_{1}}\left(M^{q_{1}}\right)\otimes T_{\sigma_{2}}\left(M^{q_{2}}\right)$
by $T_{g}^{\ell}$ and $T_{g^{2}}^{q_{1},q_{2}}$ as before. Then by Theorem \ref{Fusion product general} and (\ref{S-matrix of lattice VOA of rank one}), we have the following fusion product of $g$-twisted $(V_{\mathbb Z\alpha})^{\otimes 4}$-modules:
\begin{alignat*}{1}
T_{g}^{\ell}\boxtimes_{V^{\otimes4}}T_{g}^{j} & =\sum_{q_{1}q_{2},t}\frac{S_{\ell,t}S_{j,t}S_{q_{1}',t}S_{q_{2}',t}}{S_{0,t}^{4}}T_{g^{2}}^{q_{1},q_{2}}\\
 & =\sum_{t=0}^{2n-1}\frac{\frac{1}{\sqrt{2n}}e^{-\frac{\pi\ell ti}{n}}\frac{1}{\sqrt{2n}}e^{-\frac{\pi jti}{n}}\frac{1}{\sqrt{2n}}e^{\frac{\pi q_{1}ti}{n}}\frac{1}{\sqrt{2n}}e^{\frac{\pi q_{2}ti}{n}}}{\left(\frac{1}{\sqrt{2n}}\right)^{4}}T_{g^{2}}^{q_{1},q_{2}}\\
 & =\sum_{t=0}^{2n-1}e^{-\frac{\pi(\ell+j-q_{1}-q_{2})ti}{n}}T_{g^{2}}^{q_{1},q_{2}},
\end{alignat*}
where $\ell, j, q_1, q_2\in\{0,1, \cdots, 2n-1\}.$
 
\subsection{Fusion products of twisted modules for $\left(V_{\mathbb{Z}\alpha}\right)^{\otimes4}$ with $\langle \alpha, \alpha\rangle=2$ and $g=(1\ 2\ 3\ 4)$}

If $n = 1$, then the irreducible modules of the  lattice vertex operator algebra $V_{\mathbb{Z}\alpha}$ are $M^0 = V_{\mathbb{Z}\alpha}$ and $M^1 = V_{\mathbb{Z}\alpha + \frac{1}{2}\alpha}$. First, we consider the fusion products of untwisted $V^{\otimes 4}$-modules with twisted $V^{\otimes 4}$-modules. Recall that any irreducible $V^{\otimes 4}$-module can be written as $ M^{i_1, i_2, i_3, i_4}:=M^{i_1} \otimes M^{i_2} \otimes M^{i_3} \otimes M^{i_4}$, where $i_1, \dots, i_4 \in \{0, 1\}$. By Theorem B in \cite{DLXY} and the fusion products for lattice vertex operator algebras in Proposition 12.9 of \cite{DLe}, we have:
\[
T_{g^j}^i \boxtimes_{V^{\otimes 4}} M^{i_1, i_2, i_3, i_4} = T_{g^j}^{\overline{i_1 + i_2 + i_3 + i_4 + i}}, i, i_1, i_2, i_3,i_4\in\{0, 1\}, j = 1, 3, \overline{n} \equiv n \pmod{2}.
\]

By Theorem C in \cite{DLXY} and the fusion products for lattice vertex operator algebras in Proposition 12.9 of \cite{DLe}, we also obtain
\[
T_{g^2}^{i, j} \boxtimes_{V^{\otimes 4}} M^{i_1, i_2, i_3, i_4} = T_{g^2}^{\overline{i_1 + i_2 + i}, \overline{i_3 + i_4 + j}}, i, j, i_1, i_2, i_3,i_4\in \{0, 1\}, \overline{n}\equiv n \pmod{2}.
\]

Now we consider the fusion products of twisted modules. We have
\[
T_g^\ell \boxtimes_{V^{\otimes 4}} T_g^j = \sum_{t=0}^{1} e^{-\pi(\ell + j - q_1 - q_2) t i} T_{g^2}^{q_1, q_2} = \left(1 + e^{-\pi(\ell + j - q_1 - q_2)}\right) T_{g^2}^{q_1, q_2},  \ell, j, q_1, q_2 \in \{0, 1\}.
\]
It is straightforward to derive the following fusion products for twisted $\left(V_{\mathbb{Z}\alpha}^{\otimes 4}\right)^{\langle g \rangle}$-modules:
\[
T_g^0 \boxtimes_{V^{\otimes 4}} T_g^0 = 2 T_{g^2}^{0, 0} + 2 T_{g^2}^{1, 1},
\]
\[
T_g^0 \boxtimes_{V^{\otimes 4}} T_g^1 = T_g^1 \boxtimes_{V^{\otimes 4}} T_g^0 = 2 T_{g^2}^{0, 1} + 2 T_{g^2}^{1, 0},
\]
\[
T_g^1 \boxtimes_{V^{\otimes 4}} T_g^1 = 2 T_{g^2}^{0, 0} + 2 T_{g^2}^{1, 1}.
\]

Let \(W\) be a twisted \(V^{\otimes 4}\)-module. Then, by \cite{DRX1}, the quantum dimension of \(W\) is
\[
\text{qdim}_{V^{\otimes 4}} W = \frac{S_{W, V^{\otimes 4}}}{S_{V^{\otimes 4}, V^{\otimes 4}}}=\frac{S_{W, V^{\otimes 4}}}{S_{V,V}^4}.
\]
By Lemma \ref{S-matrix, nontwisted and twisted} and (\ref{S-matrix of lattice VOA of rank one}), we find that the quantum dimensions of all irreducible twisted \(V^{\otimes k}\)-modules are as follows:

\begin{center}
\begin{tabular}{|c|c|c|c|c|c|c|c|c|}
\hline 
 & $T_g^0$ & $T_g^1$ & $T_{g^2}^{0, 0}$ & $T_{g^2}^{0, 1}$ & $T_{g^2}^{1, 0}$ & $T_{g^2}^{1, 1}$ & $T_{g^3}^0$ & $T_{g^3}^1$ \\
\hline 
$\text{qdim}$ & $2\sqrt{2}$ & $2\sqrt{2}$ & $2$ & $2$ & $2$ & $2$ & $2\sqrt{2}$ & $2\sqrt{2}$ \\
\hline 
\end{tabular}
\end{center}

Using the properties of fusion rules for twisted modules in Lemma \ref{Fusion rule prop twisted} and the properties of dual twisted modules in Proposition \ref{twisted dual}, we obtain \[
T_g^i \boxtimes_{V^{\otimes 4}} T_{g^2}^{j, \ell} = 2 T_{g^3}^{\overline{i + j + \ell}}, \quad i, j, \ell \in \{0, 1\}, \quad \overline{n} \equiv n \pmod{2}.
\]
By Theorem \ref{twisted to k formula}, we have
\begin{alignat*}{1}
\left(T_{g}^{0}\right)^{\boxtimes4} & =\sum_{i_{1},i_{2},i_3,i_{4}=0}^{1}\left(\sum_{a=0}^{1}S_{0,a}^{-4}\prod_{\ell=1}^{4}\frac{S_{i_{\ell}',a}}{S_{0,a}}\right)M^{i_{1},i_{2},i_{3},i_{4}}\\
 & =\sum_{i_{1},i_{2},i_3,i_{4}=0}^{1}4\left(1+\left(-1\right)^{i_{1}+i_{2}+i_{3}+i_{4}}\right)M^{i_{1},i_{2},i_{3},i_{4}}.
\end{alignat*}
Direct computation gives:
\begin{alignat*}{1}
\left(T_{g}^{0}\right)^{\boxtimes4} & =8M^{0,0,0,0}+8M^{1,1,0,0}+8M^{0,1,1,0}+8M^{0,0,1,1}\\
 & +8M^{1,0,1,0}+8M^{0,1,0,1}+8M^{1,0,0,1}+8M^{1,1,1,1}.
\end{alignat*}

\section*{Acknowledgment}
This work was partially conducted during the third author's visit to the University of California, Santa Cruz. The third author gratefully acknowledges the support and hospitality provided by the Department of Mathematics at the University of California, Santa Cruz.

\vskip10pt {\footnotesize{}{}{}{}{ }}\textbf{\footnotesize{}{}{}{}C.
Dong}{\footnotesize{}{}{}{}: Department of Mathematics, University
of California Santa Cruz, CA 95064 USA; }\texttt{\footnotesize{}{}{}dong@ucsc.edu}{\footnotesize\par}

\textbf{\footnotesize{}{}{}{}F. Xu}{\footnotesize{}{}{}{}: Department
of Mathematics, University of California, Riverside, CA 92521 USA;
}\texttt{\footnotesize{}{}{}xufeng@math.ucr.edu}{\footnotesize\par}

\textbf{\footnotesize{}{}{}{}N. Yu}{\footnotesize{}{}{}{}: School
of Mathematical Sciences, Xiamen University, Fujian, 361005, China;
}\texttt{\footnotesize{}{}{} ninayu@xmu.edu.cn}{\footnotesize\par}


\begin{thebibliography}{DLM1}

\bibitem[A1]{A1} T. Abe, $C_{2}$-cofiniteness of the $2$-cycle permutation orbifold models of minimal Virasoro vertex operator algebras, \emph{Commun. Math. Phys.} \textbf{303} (2011), 825--844.

\bibitem[A2]{A2} T. Abe, $C_{2}$-cofiniteness of 2-cyclic permutation orbifold models, \emph{Commun. Math. Phys.} \textbf{317} (2013), 425--445.

\bibitem[ABD]{ABD} T. Abe, G. Buhl, C. Dong, Rationality, Regularity, and $C_{2}$-cofiniteness, \emph{Trans. Amer. Math. Soc.} \textbf{356} (2004), 3391--3402.

\bibitem[BDM]{BDM} K. Barron, C. Dong, G. Mason, Twisted sectors for tensor product vertex operator algebras associated to permutation groups, \emph{Commun. Math. Phys.} \textbf{227} (2002), no. 2, 349--384.

\bibitem[BEKT]{BEKT}B. Bakalov, J. Elsinger, V.G. Kac,  I. Todorov, Orbifolds of lattice vertex algebras, \emph{Jpn. J. Math.}   18 (2023), no. 2, 169--274.

\bibitem[BHL]{BHL} K. Barron, Y.-Z. Huang, J. Lepowsky, An equivalence of two constructions of permutation-twisted modules for lattice vertex operator algebras, \emph{J. Pure Appl. Algebra} \textbf{210} (2007), no. 3, 797--826.


\bibitem[BK]{BK} B. Bakalov, A. Kirillov, Lectures on Tensor Categories and Modular Functors, University Lecture Series, \textbf{21}, 2000.

\bibitem[Bo]{Bo} R. E. Borcherds, Vertex algebras, Kac-Moody algebras, and the Monster, \emph{Proc. Natl. Acad. Sci. USA} \textbf{83} (1986), 3068--3071.

\bibitem[CKM]{CKM} T. Creutzig, S. Kanade, R. McRae, Tensor categories for vertex operator superalgebra extensions, Creutzig, Mem. Amer. Math. Soc.  \textbf{295} (2024), no. 1472, vi+181 pp.




\bibitem[CM]{CM} S. Carnahan, M. Miyamoto, Regularity of fixed-point vertex operator subalgebras, arXiv:1603.05645.

\bibitem[D]{D} C. Dong, Vertex algebras associated with even lattices, \emph{J. Algebra} \textbf{160} (1993), 245--265.

\bibitem[DLe]{DLe} C. Dong, J. Lepowsky, Generalized Vertex Algebras and Relative Vertex Operators, Progress in Math., Vol. 112, Birkhauser, Boston, 1993.

\bibitem[DL]{DL} C. Dong, X. Lin, Twisted Verlinde formula for vertex operator algebras, arXiv:2310.15563.

\bibitem[DLM1]{DLM1} C. Dong, H. Li, G. Mason, Simple currents and extensions of vertex operator algebras, \emph{Commun. Math. Phys.} \textbf{180} (1996), 671--707.

\bibitem[DLM2]{DLM2} C. Dong, H. Li, G. Mason, Twisted representations of vertex operator algebras, \emph{Math. Ann.} \textbf{310} (1998), 571--600.

\bibitem[DLM3]{DLM3} C. Dong, H. Li, G. Mason, Modular invariance of trace functions in orbifold theory and generalized moonshine, \emph{Commun. Math. Phys.} \textbf{214} (2000), 1--56.

\bibitem[DLN]{DLN} C. Dong, X. Lin, S. Ng, Congruence property in conformal field theory, \emph{Algebra Number Theory} \textbf{9} (2015), 2121--2166.

\bibitem[DLXY]{DLXY} C. Dong, H. Li, F. Xu, N. Yu, Fusion products of twisted modules in permutation orbifolds, \emph{Trans. Am. Math. Soc.} \textbf{377} (2024), 1717--1760.

\bibitem[DJX]{DJX} C. Dong, X. Jiao, F. Xu, Quantum dimensions and quantum Galois theory, \emph{Trans. Am. Math. Soc.} \textbf{365} (2013), 6441--6469.

\bibitem[DNR]{DNR} C. Dong, R. Ng, L. Ren, Orbifolds and minimal modular extensions, 	arXiv:2108.05225.

\bibitem[DR]{DR} C. Dong, L. Ren, Congruence property in orbifold theory, \emph{Proc. Amer. Math. Soc.} \textbf{146} (2018), 497--506.

\bibitem[DRX1]{DRX1} C. Dong, L. Ren, F. Xu, On orbifold theory, \emph{Adv. Math.} \textbf{321} (2017), 1--30.

\bibitem[DRX2]{DRX2} C. Dong, L. Ren, F. Xu, S-matrix in orbifold theory, \emph{J. Algebra} \textbf{568} (2021), 139--159.

\bibitem[DRX3]{DRX3} C. Dong, L. Ren, F. Xu, Coset constructions and Kac-Wakimoto Hypothesis, 	arXiv:2404.00778.

\bibitem[DXY1]{DXY1} C. Dong, F. Xu, N. Yu, 2-cyclic permutations of lattice vertex operator algebras, \emph{Proc. Amer. Math. Soc.} \textbf{144} (2016), 3207--3220.

\bibitem[DXY2]{DXY2} C. Dong, F. Xu, N. Yu, 2-permutations of lattice vertex operator algebras: higher rank, \emph{J. Algebra} \textbf{476} (2017), 1--25.

\bibitem[DXY3]{DXY3} C. Dong, F. Xu, N. Yu, The 3-permutation orbifold of a lattice vertex operator algebra, \emph{J. Pure Appl. Algebra} \textbf{222} (2018), no. 6, 1316--1336.

\bibitem[DXY4]{DXY4} C. Dong, F. Xu, N. Yu, S-matrix in permutation orbifolds, \emph{J. Algebra} \textbf{606} (2022), 851--876.

\bibitem[EGNO]{EGNO} P. Etingof, S. Gelaki, D. Nikshych, V. Ostrik, \emph{Tensor Categories}, \textbf{205} Mathematical Surveys and Monographs, American Mathematical Society, Providence, RI, 2015.

\bibitem[ENO]{ENO} P. Etingof, D. Nikshych, V. Ostrik, On fusion categories, \emph{Ann. of Math.} \textbf{162} (2005), 581--642.

\bibitem[FHL]{FHL} I. Frenkel, Y.-Z. Huang, J. Lepowsky, On axiomatic approaches to vertex operator algebras and modules, \emph{Mem. Amer. Math. Soc.} \textbf{104} (1993).

\bibitem[FLM]{FLM} I. B. Frenkel, J. Lepowsky, A. Meurman, Vertex Operator Algebras and the Monster, Pure and Applied Math. \textbf{134}, Academic Press, Massachusetts, 1988.
\bibitem[G]{G} B. Gui, Genus-zero permutation-twisted conformal
 blocks for tensor product vertex operator
 algebras: The tensor-factorizable case. arXiv:2111.04662.

\bibitem[H1]{H1} Y.-Z. Huang, A theory of tensor products for module categories for a vertex operator algebra, IV, \emph{J. Pure Appl. Algebra} \textbf{100} (1995), 173--216.

\bibitem[H2]{H2} Y.-Z. Huang, Vertex operator algebras and the Verlinde conjecture, \emph{Commun. Contemp. Math.} \textbf{10} (2008), no. 1, 103--154.

\bibitem[H3]{H3} Y.-Z. Huang, Rigidity and modularity of vertex tensor categories, \emph{Commun. Contemp. Math.} \textbf{10} (2008), suppl. 1, 871--911.

\bibitem[HL1]{HL1} Y.-Z. Huang, J. Lepowsky, A theory of tensor products for module categories for a vertex operator algebra, I, II, \emph{Selecta Mathematica}, New Series \textbf{1} (1995), 699--756; 757--786.

\bibitem[HL2]{HL2} Y.-Z. Huang, J. Lepowsky, A theory of tensor products for module categories for a vertex operator algebra, III, \emph{J. Pure Appl. Algebra} \textbf{100} (1995), 141--171.

\bibitem[K1]{K1} A. Kirillov Jr., Modular categories and orbifold models II, 2001, arXiv:math/0110221.

\bibitem[K2]{K2} A. Kirillov Jr., Modular categories and orbifold models, \emph{Commun. Math. Phys.} \textbf{229} (2002), 309--335.

\bibitem[KL]{KL} M. Karel, H. Li, Certain generating subspaces for vertex operator algebras, \emph{J. Algebra} \textbf{217} (1999), 393--421.

\bibitem[KLX]{KLX} V. Kac, R. Longo, F. Xu, Solitons in affine and permutation orbifolds, \emph{Commun. Math. Phys.} \textbf{253} (2005), no. 3, 723--764.

\bibitem[KO]{KO} A. Kirillov Jr., V. Ostrik, On a $q$-analogue of the McKay correspondence and the ADE classification of $sl_2$ conformal field theories, \emph{Adv. Math.} \textbf{171} (2002), 183--227.

\bibitem[Le]{Le} J. Lepowsky, Calculus of twisted vertex operators, \emph{Proc. Natl. Acad. Sci. USA} \textbf{82} (1985), 8295--8299.

\bibitem[Li1]{Li1} H. Li, An analogue of the Hom functor and a generalized nuclear democracy theorem, \emph{Duke Math. J.} \textbf{93} (1998), no. 1, 73--114.

\bibitem[Li2]{Li2} H. Li, Some finiteness properties of regular vertex operator algebras, \emph{J. Algebra} \textbf{212} (1999), 495--514.

\bibitem[Lu]{Lu} G. Lusztig, Leading coefficients of character values of Hecke algebras, \emph{Proc. Symp. Pure Math.} \textbf{47} (1987), 235--262.

\bibitem[M1]{M1} M. Miyamoto, A $\mathbb{Z}_3$-orbifold theory of lattice vertex operator algebra and $\mathbb{Z}_3$-orbifold constructions, \emph{Springer Proc. Math. Stat.} \textbf{40}, 319--344, 2013.

\bibitem[M2]{M2} M. Miyamoto, $C_2$-cofiniteness of cyclic-orbifold models, \emph{Commun. Math. Phys.} \textbf{335} (2015), 1279--1286.
\bibitem[MPS1]{MPS1} A. Milas, M. Penn,  C. Sadowski, Permutation orbifolds of Virasoro vertex algebras and W-algebras, \emph{J. Algebra}    \textbf{570} (2021), 267--296.
\bibitem[MPS2]{MPS2}A.  Milas,  M. Penn,  C. Sadowski, $S_3$-permutation orbifolds of Virasoro vertex algebras, \emph{J. Pure Appl. Algebra}   \textbf{ 227} (2023), no. 10, Paper No. 107378, 25 pp.
\bibitem[MPSh]{MPSh} A. Milas, M. Penn,  H. Shao,  Permutation orbifolds of the Heisenberg vertex algebra  $H(3)$, \emph{J. Math. Phys.} \textbf{ 60} (2019), no. 2, 021703, 17 pp.


\bibitem[Mu]{Mu} M. Mueger, Modular Categories, arXiv:1201.6593.

\bibitem[S]{S} J.-P. Serre, A Course in Arithmetic, Grad. Texts in Math., vol. 7, Springer-Verlag, 1973.

\bibitem[V]{V} E. Verlinde, Fusion rules and modular transformation in 2D conformal field theory, \emph{Nucl. Phys. B} \textbf{300} (1988), 360--376.

\bibitem[X1]{X1} F. Xu, Some computations in the cyclic permutations of completely rational nets, \emph{Commun. Math. Phys.} \textbf{267} (2006), no. 3, 757--782.

\bibitem[X2]{X2} X. Xu, Intertwining operators for twisted modules of a colored vertex operator superalgebra, \emph{J. Algebra} \textbf{175} (1995), no. 1, 241--273.

\bibitem[Z]{Z} Y. Zhu, Modular invariance of characters of vertex operator algebras, \emph{J. Amer. Math. Soc.} \textbf{9} (1996), 237--302.

\end{thebibliography}
\end{document}